\DeclareSymbolFont{symbolsC}{U}{txsyc}{m}{n}
\DeclareMathSymbol{\Searrow}{\mathrel}{symbolsC}{117}
\newtheorem{theorem}{Theorem}[section]
\newtheorem{proposition}[theorem]{Proposition}
\newtheorem{lemma}[theorem]{Lemma}
\newtheorem{corollary}[theorem]{Corollary}
\theoremstyle{definition}
\newtheorem{definition}[theorem]{Definition}
\newtheorem{eg}[theorem]{Example}
\newtheorem{remark}[theorem]{Remark}
\numberwithin{equation}{section}
\numberwithin{mytheorem}{subsection}
\def \no {\noindent}
\def \R{\mathbb{R}}
\def \N{\mathbb{N}}
\def \C {\mathcal{C}}
\def \U {\mathcal{U}}
\def \rar{\rightarrow}
\def \D{\mathcal{D}}
\newcommand{\ring}[1]{\mathcal{R}_{#1}}
\newcommand{\sk}[2]{st_{#1}(#2)}
\newcommand{\Ld}[1]{Lk_{#1}(\Delta)}
\newcommand{\Mh}[1]{\mathcal{M}_H(\Delta(#1))}
\newcommand{\Cm}[1]{\C_{\operatorname{min}}(\Delta(#1))}
\newcommand{\Lk}[2]{Lk_{#1}(\Delta(#2))}
\newcommand{\cone}[2]{\operatorname{cone}_{#1}(#2)}
\newcounter{casenum}
\newcounter{subcasenum}
\title{Neural ring homomorphism preserves mandatory sets required for open convexity}
\author{Neha Gupta, Suhith K N\footnote{Suhith's research is partially supported by Inspire fellowship from DST grant IF190980.}}
\date{}
\begin{document}

	\maketitle
	\begin{abstract}
It has been studied in \cite{curto2017makes} that a neural code that has an open convex realization does not have any local obstruction relative to the neural code.	Further, a neural code $ \C $ has no local obstructions if and only if it contains the set of mandatory codewords, $ \C_{\min}(\Delta),$ which depends only on the simplicial complex $\Delta=\Delta(\C)$.  Thus if $\C \not \supseteq \C_{\min}(\Delta)$, then  $\C$ cannot be open convex.   However, the problem of constructing $ \C_{\min}(\Delta) $ for any given code $ \C $ is undecidable. There is yet another way to capture the local obstructions via  the homological mandatory set, $ \mathcal{M}_H(\Delta). $  The significance of $ \mathcal{M}_H(\Delta) $ for a given code $ \C $ is that $ \mathcal{M}_H(\Delta) \subseteq \C_{\min}(\Delta)$ and so $ \C $ will have local obstructions if $  \C\not\supseteq\mathcal{M}_H(\Delta). $  In this paper we study the affect on the sets $\C_{\min}(\Delta) $ and $\mathcal{M}_H(\Delta)$ under the action of various surjective elementary code maps. Further, we study the relationship between Stanley-Reisner rings of the simplicial complexes associated with neural codes of the elementary code maps.   Moreover, using this relationship, we give an alternative proof to show that $ \mathcal{M}_H(\Delta) $ is preserved under the elementary code maps.
	\end{abstract}
	\noindent{\bf Key Words}: {Neural code, Neural ring homomorphism, Stanley-Reisner ring, Simplicial complex, local obstruction}

\noindent{\bf 2010 Mathematics Subject Classification}:  Primary 52A37, 92B20. Secondary 54H99, 55N99.

\section{Introduction}

In 1971, O’Keefe and Dostrovsky \cite{burgess20142014,o1971hippocampus} discovered that a few cells in the rat's hippocampus fire only in specific locations of the rat's environment. These cells are named place cells.  Place cells are neurons that play a vital role in the rat's perception of space. Experimental results \cite{curto2017makes} show that the area where a specific place cell fired was approximately open convex sets in $\R^2$.  This gives the motivation to work with neural codes on $ n $ neurons and to figure out when they would correspond to open and convex regions of some $ \R^k $. A neural code on $n$ neurons, denoted by $ \C $, is a collection of subsets of the  set $ [n]=\{1,2,3,\dots,n\}. $ Given a collection of $ n $ sets $\U=\{U_1,\dots,U_n\}$ with $ U_i\subseteq\R^k, $  we correspond a neural code to $\U$, defined as  $ \C(\U)=\left\{\sigma\in [n] \ \Big\vert\ \displaystyle\bigcap_{j\in \sigma} U_j \backslash \displaystyle\bigcup_{i\not\in\sigma } U_i \not= \phi \right\} $ that represents these sets. Conversely, we say that a given neural code $ \C $ on $ n $ neurons is \textit{realizable} in $\R^k$, if there exists  $ \U=\{U_1,\dots,U_n\} $ with $ U_i\subseteq\R^k $ such that $ \C=\C(\U). $  In addition, if  $  U_i $'s are all convex sets then we call $ \C $ a convex code or convex realizable. Similarly, one can define an open convex and a closed convex code.  

The search for whether a given neural code has an open convex realization has led to a great development in this area. It has been established that not every neural code is open convex. However, it is also known that every neural code is at least convex realizable. This result was proved by Franke and Muthiah \cite{franke2018every} in 2019.  Cruz et al. \cite{cruz2019open} proved that max intersection-complete codes are open and closed convex. In \cite{suhith2021few}, we have shown the converse of Cruz's theorem also holds for a particular class of neural codes which we call \textit{doublet maximal codes}. 

Curto et al. \cite{curto2017makes} worked with local obstructions (a topological concept) relative to a  neural code.  They show that if a neural code is open convex, it has no local obstructions.  The question of whether a given neural code $\C$ has a local obstruction can be reduced to the question of whether it contains certain minimal code, $\Cm{\C}$, which is solely dependent on the simplicial complex, $\Delta=\Delta(\C),$ associated to the neural code $\C$. Definitions and details of these terms are discussed in the next section. 
Further in \cite{curto2013neural} the authors show that a neural code $ \C $ has no local obstructions if and only if $ \C\supseteq\C_{\min}(\Delta) . $ However, the problem of constructing $ \C_{\min}(\Delta) $ for any given code $ \C $ is undecidable. Another way to capture the local obstructions is via the homological mandatory set, $ \mathcal{M}_H(\Delta)$ (Refer Section  \ref{manset}). The significance of $ \mathcal{M}_H (\Delta) $ for a given neural code $ \C $ is that $ \mathcal{M}_H (\Delta) \subseteq \C_{\min}(\Delta)$ and $ \C $ will have local obstructions if $ \C\not\supseteq\mathcal{M}_H(\Delta). $ In \cite{curto2017makes}, the authors provide an algorithm to obtain $ \mathcal{M}_H (\Delta)$ for any given neural code using the Stanley-Reisner ideals \cite{miller2004combinatorial}.

Curto et al. \cite{curto2013neural} associated a ring $\ring{\C}$ (called neural ring) to a neural code $\C$. A special class of ring homomorphisms between two neural rings, called neural ring homomorphisms were introduced by Curto and Youngs \cite{curto2020neural}. They show a $1-1$ correspondence between a neural ring homomorphism and a code map between the corresponding  neural codes (these code maps will be a compositions of only five \textit{elementary code maps}). We have briefly explained these concepts in Section \ref{nrh}. 

The main objective of this paper is to see the action of the surjective elementary code maps on the set $ \Mh{\C}. $ We will also check whether $ \C_{\min}(\Delta(\C)) $ is preserved under these elementary code maps. Further, we study the relationship between Stanley-Reisner rings of the simplicial complexes associated with the domain and the codomain of the elementary code maps. Moreover, this relationship helps us give an alternative proof to show that $ \Mh{\C} $ is preserved under the elementary code maps. Before we explore further, let us introduce all the required terminologies.

\section{Preliminaries}
Let $\C$ be a neural code on $n$ neurons. From now onwards, we will write neural codes as just codes. Elements of a code are called \textit{codewords}. Given $\alpha\subseteq [n]$, then $\alpha$ is simply a subset of the set $[n]$. We now discuss three different forms of writing $\alpha$. First, the obvious one, is the \textit{set form}. For example, say, $\alpha$ is the subset $\{1,3\}$ of $[4].$ Second, we discuss the \textit{word form} for $\alpha\subseteq[n].$ In this form we ignore the parenthesis and commas, and write the elements of $\alpha$ as if they are letters of a word. For example $\alpha=\{1,3\}$ is $13$ in the word form.    Lastly, we define the \textit{binary form} for $ \alpha $ as the vector $ \alpha_1\alpha_2\cdots\alpha_n\in\{0,1\}^n $ such that $ \alpha_i=1 $ if and only if $ i\in\alpha. $ The same example $\alpha=\{1,3\} \subseteq [4]$ in binary form is $1010$.  Consequently, we can also represent a code in these three different forms.   Consider a code in set form $ \C=\{\emptyset,\{1\},\{1,2\},\{1,3\},\{2,4\}\} $ on $4$ neurons. In the word form  $ \C $ is represented as $\{\emptyset,1,12,13,24\} $. Finally, the binary form of this code will be $ \{0000,1000,1100,1010,0101\}. $ We may use both the word form and the binary form of the code interchangeably for the rest of the paper. 
\begin{remark}
	Let $ \omega\subseteq[n] $ and $ \omega=\omega_1\omega_2\cdots\omega_n $  be its binary form. As $ [n]\subset [n+1], $ we get $ \omega\subset[n+1]. $  Moreover, its binary form as a subset of $[n+1]$ will be $ \omega_1\omega_2\cdots\omega_n0. $ For sake of convenience, we write $ \omega0 $ when we want to consider  $ \omega $ on $ n+1 $ neurons. However, in the subset form, they are still the same. For example, the codeword $ 12 $ on 3 and 4 neurons has  $ 110 $ and $ 1100 $ as its binary form, respectfully. But as subsets, both represent the same subset $ \{1,2\} $. Furthermore, we say two sets $A\subseteq[n]$ and $B\subseteq[n+1]$ are equal in the subset form whenever for all $\omega\in A $ we have $\omega0\in B$ and for all $ \alpha'\in B$ we have $\alpha'=\alpha0$ with $\alpha\in A.$  
\end{remark}
\no We will now see some operations on the codewords of a code considering them in binary form. 
\begin{remark}[Operations on a codewords] \label{remoperations}
	Let $ \C $ be a code on $ n $ neurons and $ \alpha,\beta\in\C $ with $ \alpha=\alpha_1\alpha_2\cdots\alpha_n $ and $ \beta=\beta_1\beta_2\cdots\beta_n $ as their binary forms. We now discuss the following operations. 
	\begin{enumerate}
		\item  \textbf{Containment:} We say $ \alpha\subseteq \beta $ if and only if, for all $ i\in[n], \  \alpha_i\leq\beta_i. $
		\item \textbf{Intersection:} Let $ \sigma=\alpha\cap\beta $ with $ \sigma=\sigma_1\sigma_2\cdots\sigma_n, $ then for $ i\in[n] $ we have $\sigma_i=1$ if $ \alpha_i=\beta_i=1$, otherwise  $\sigma_i=0$. 		\item \textbf{Union:} Let $ \tau=\alpha\cup\beta $ with $ \tau=\tau_1\tau_2\cdots\tau_n $ then for $ i\in[n] $ we have  $\tau_i=0$ if $\alpha_i=\ \beta_i=0$ else $\tau_i=1$.
	\end{enumerate}
\end{remark}
\begin{definition}[Maximal codeword]
	Let $\C$ be a code. A codeword $ \sigma \in\C $ is said to be \textit{maximal} in $\C$ if it is not contained in any other codeword of $ \C. $ In other words, if there exists $ \tau \in\C $ such that $ \sigma\subseteq \tau, $ then $ \sigma=\tau. $ A maximal codeword is also called \textit{facet} of $\C$.
\end{definition}

\no Now we will introduce simplicial complexes and look into few of its properties.
\begin{definition}[Simplicial complex] \label{defsc} Let $ n\in \N. $
	A simplicial complex $ K $ is a nonempty collection of subsets of $ [n]=\{1,2,\dots,n\} $ which is closed under inclusion. In other words, if $ \sigma\in K $ and $ \tau\subseteq\sigma $ then $ \tau\in K. $ By definition, the empty set ($ \emptyset $) is always an element of any simplicial complex. An element of a simplicial complex $K$ is called simplex or a face.  A face $\sigma\in K$ is said to be maximal face or facet of $K$ if it is not contained in any other face of $ K. $ For any $ \sigma\in K $ we know that $ \sigma\subseteq [n] $ and we will represent the cardinality of $ \sigma $ by $ |\sigma| $. The dimension of a simplex $ \sigma\in K, $ denoted by $\dim \sigma$,  is given by $ \dim \sigma =  |\sigma|-1. $ Note that dimension of $ \emptyset  $ is $ -1 $.  The dimension of a simplicial complex $ K $ is given by  $ \max_{\sigma\in K}\ \{\dim \sigma\}. $ 
	
	\no A \textit{vertex} of a simplicial complex $K$ is a zero dimensional simplex. The set of all vertices of $ K $ is called the vertex set and is denoted by $ V_K. $ In set form, $ V_K=\{\sigma\in K\mid\dim \sigma =0\}$.  For clarity we will say $K$ is a simplicial complex on $n$ vertices. 
\end{definition}

\begin{definition}[Link] Let $ K $ be a simplicial complex and let $ \sigma\in K $. Then, the link of $ \sigma$ inside $ K $ is given by $ Lk_{\sigma}(K)=\{\omega\in K\ |\ \omega\cap\sigma=\emptyset\text{ and } \omega\cup\sigma\in K\}. $ Clearly,  $Lk_{\sigma}(K)$ is a sub-complex of $K$.
\end{definition}
\no Note that the difference between a code and a simplicial complex is that, a code may not preserve inclusions. Hence we can associate a simplicial complex to a code which shall include all the subsets of the codewords. 
\begin{definition}[Simplicial complex of a code $ \C $] Given a code $ \C $ on $ n $ neurons we denote the set $ \Delta(\C) $ as the simplicial complex of the code and it is given by $$\Delta(\C)=\{\alpha\subseteq[n]\mid\alpha\subseteq\beta, \text{ for some } \beta\in\C\}.$$ Note that by the above definition, we always have $ \C\subseteq \Delta(\C). $ \label{defscofc}
\end{definition} 
\begin{remark}
	We  note down two simple properties of the simplicial complex of a code. \label{lemimportant}
	\begin{enumerate}
		\item 	Let $ \C $ and $ \D $ be two codes on $ n $ neurons. If $ \C\subseteq\D \subseteq \Delta(\C) $ then $ \Delta(\C)=\Delta(\D). $ 
		\item 	Let $ \C $ be a code on $n$ neurons. Then $ \tau  $ is a facet of $ \C $ if and only if it is a facet of $ \Delta(\C). $ 
	\end{enumerate}
\end{remark}
\begin{definition}[Restriction of a simplicial complex $K$ to a set $\Gamma$] \label{defrestriction}
	Let $ K $ be a simplicial complex on $ n $ vertices and let $ \Gamma $ be a collection of some subsets of $ [n]. $  Then the restriction of $ K $ to $ \Gamma $ is defined as,
	\begin{align*}
		K|_\Gamma&=\{\alpha\in K\ \mid \alpha\subseteq\gamma \text{ for some } \gamma\in\Gamma\} \\
		&=\{\alpha\in K\ \mid\ \alpha\in\Delta(\Gamma)\} = K\cap \Delta(\Gamma).	
	\end{align*}
	Observe that if $ \Gamma=\{\gamma\}, $ a singleton set, then $ K|_\Gamma= \{\alpha\in K \mid \alpha\subseteq\gamma\} $ which we shall denote as $K|_\gamma.$  Note that $ K|_\Gamma $ is also a simplicial complex since it is an intersection of two simplicial complexes.
\end{definition}

\begin{eg} \label{exampledata}
	Consider the code $ \C=\{24,35,45,123\} \subseteq [6]$.  Then $ \Delta(\C)=\{1,2,3,4,5,12,13, \linebreak23,24,35,45,123\} $. For $ \sigma=2, \ \Lk{\sigma}{\C}=\{1,3,4,13\}. $  Consider $ \Gamma=\{35,45,123,6\}. $ Then $ \Delta(\C)|_\Gamma=\{1,2,3,4,5,12,13,23,35,45,123\}. $
\end{eg}

\no Now we define cone of a simplicial complex.
\begin{definition}[Cone of a simplicial complex]
	Let $ K $ be a simplicial complex and $ v $ be a new vertex not in  $ K. $ Then, the set given by $$\operatorname{cone}_v(K)=\{\sigma\cup v \mid\sigma\in K\}\cup K $$ is called the cone of $ K $ over the vertex $ v. $ 
\end{definition}

\no Next, we discuss when a link of a simplicial complex will become a cone over some vertex.

\begin{definition}[$f_\sigma$]\label{fsigma}
	Let $ K $ be a simplicial complex. For any $ \sigma \in K\ $ denote $ f_\sigma $ to be  the intersection of all facets of $ K $ containing $ \sigma. $
\end{definition} \no Curto et al. \cite{curto2017makes} gave the following lemma relating the link and cone depending upon $ f_\sigma. $
\begin{lemma}\cite[Lemma 4.5]{curto2017makes}
	Let $K$ be a simplicial complex and let $ \sigma\in K. $ Then $ \sigma=f_\sigma  $ if and only if $  Lk_{\sigma}(K) $ is not a cone of any sub-complex\footnote{$K'$ is called sub-complex of $K$, if $K'\subseteq K$ and $K'$ is also a simplicial complex.  }.  
\end{lemma}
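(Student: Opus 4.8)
The plan is to prove both implications by contraposition, after first recording the trivial but essential fact that $\sigma\subseteq f_\sigma$ always holds: $\sigma$ is contained in every facet of $K$ containing $\sigma$, hence in the intersection of all of them, which is $f_\sigma$. Consequently ``$\sigma\neq f_\sigma$'' is equivalent to ``there exists a vertex $v\in f_\sigma\setminus\sigma$'', and the whole proof reduces to the single equivalence
$$ v\in f_\sigma \quad\Longleftrightarrow\quad \big(\,\omega\cup v\in Lk_\sigma(K)\text{ for every }\omega\in Lk_\sigma(K)\,\big), $$
together with the observation that the right-hand side is exactly what it means for $Lk_\sigma(K)$ to be a cone over the vertex $v$.

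For the direction ``$\sigma\neq f_\sigma\Rightarrow Lk_\sigma(K)$ is a cone'', I would fix $v\in f_\sigma\setminus\sigma$ and set $L=\{\omega\in Lk_\sigma(K)\mid v\notin\omega\}$, which is readily checked to be a sub-complex of $Lk_\sigma(K)$ with $\{v\}\notin L$. The key step is to show $\omega\cup v\in Lk_\sigma(K)$ for every $\omega\in Lk_\sigma(K)$: given such an $\omega$, the face $\omega\cup\sigma\in K$ lies in some facet $F$, and since $\sigma\subseteq F$ this $F$ is a facet containing $\sigma$, so $v\in f_\sigma\subseteq F$; hence $\omega\cup\sigma\cup v\subseteq F$ gives $\omega\cup\sigma\cup v\in K$, while $(\omega\cup v)\cap\sigma=\emptyset$ (using $\omega\cap\sigma=\emptyset$ and $v\notin\sigma$), so $\omega\cup v\in Lk_\sigma(K)$. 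Applying this with $\omega=\emptyset$ shows $\{v\}\in Lk_\sigma(K)$, so $v$ genuinely is a vertex of the link; then $Lk_\sigma(K)=\operatorname{cone}_v(L)$ follows by splitting each face of $Lk_\sigma(K)$ according to whether it contains $v$.

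For the reverse direction ``$Lk_\sigma(K)$ is a cone $\Rightarrow\sigma\neq f_\sigma$'', I would write $Lk_\sigma(K)=\operatorname{cone}_v(L)$ for some sub-complex $L$ and some vertex $v$ not in $L$, so that $\omega\cup v\in Lk_\sigma(K)$ for every face $\omega$ of $Lk_\sigma(K)$. Taking $\omega=\emptyset$ gives $\{v\}\in Lk_\sigma(K)$, hence $v\notin\sigma$. To see $v\in f_\sigma$, let $F$ be any facet of $K$ with $\sigma\subseteq F$ and put $\omega=F\setminus\sigma$; then $\omega\cap\sigma=\emptyset$ and $\omega\cup\sigma=F\in K$, so $\omega\in Lk_\sigma(K)$, whence $\omega\cup v\in Lk_\sigma(K)$ and $(\omega\cup v)\cup\sigma=F\cup v\in K$. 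Maximality of $F$ forces $F\cup v=F$, i.e. $v\in F$. So $v\in f_\sigma\setminus\sigma$ and therefore $\sigma\neq f_\sigma$.

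The mathematical content is light; the one place that needs care is the bookkeeping around the definition of ``$Lk_\sigma(K)$ is a cone of a sub-complex'' — confirming in the forward direction that the candidate apex $v$ is actually a vertex of the link and that the chosen $L$ does not contain $v$, and reading off in the reverse direction that the cone hypothesis really does give $\omega\cup v\in Lk_\sigma(K)$ for all faces $\omega$. I expect that to be the main (and fairly minor) obstacle; everything else is routine manipulation of $\cap,\cup,\setminus$ inside the link.
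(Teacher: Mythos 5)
The paper cites this result from \cite[Lemma~4.5]{curto2017makes} and does not reproduce a proof, so there is no in-paper argument to compare against. Your proof is correct and, as far as I recall, follows essentially the same elementary route as the cited source: identifying the apex of a cone structure on $Lk_\sigma(K)$ with a vertex $v\in f_\sigma\setminus\sigma$, verifying in the forward direction that such a $v$ must lie in every facet $F\supseteq\omega\cup\sigma$ so that $\omega\cup v$ stays in the link, and in the reverse direction using $\omega=F\setminus\sigma$ together with maximality of $F$ to force $v\in F$. The bookkeeping you flag (confirming $\{v\}\in Lk_\sigma(K)$ via $\emptyset\in L$, and that your $L$ is a genuine sub-complex not containing $v$) is handled correctly, and the paper's convention that $\emptyset$ belongs to every simplicial complex covers the degenerate case.
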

\no We will explicitly use the following corollary of this above lemma.
\begin{corollary} \cite[Corollary 4.6]{curto2017makes}
	Let $K$ be a simplicial complex and let $ \sigma\in K $ be nonempty. If $ \sigma\not= f_\sigma, $ then $ Lk_{\sigma}(K)  $ is a cone of some sub-complex and hence contractible\footnote{A set is contractible if it is homotopy-equivalent to a point \cite{rotman2013introduction}.}. \label{corlemmacone}
\end{corollary}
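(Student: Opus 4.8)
The plan is to read the corollary off from the preceding lemma (Lemma~4.5 of \cite{curto2017makes}) together with the standard fact that a simplicial cone is contractible; essentially all of the combinatorial content is already packaged in that lemma. First I would note that the assertion ``$Lk_{\sigma}(K)$ is a cone of some sub-complex'' is obtained simply by negating both sides of the equivalence in the preceding lemma: since it says $\sigma = f_\sigma$ if and only if $Lk_{\sigma}(K)$ is not a cone of any sub-complex, it follows that $\sigma \neq f_\sigma$ if and only if $Lk_{\sigma}(K)$ is a cone of some sub-complex. Under the hypothesis $\sigma \neq f_\sigma$ there therefore exist a sub-complex $L$ of $K$ and a vertex $v \notin L$ with $Lk_{\sigma}(K) = \operatorname{cone}_v(L)$.

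If one prefers not to use the lemma as a black box, the ``cone'' conclusion can also be seen directly: since $\sigma \subseteq f_\sigma$ always and $\sigma \neq f_\sigma$, choose a vertex $v \in f_\sigma \setminus \sigma$; because every facet of $K$ containing $\sigma$ also contains $v$, adjoining $v$ to any face of $Lk_{\sigma}(K)$ again lands in $Lk_{\sigma}(K)$, and this exhibits $Lk_{\sigma}(K) = \operatorname{cone}_v(L)$ with $L = \{\tau \in Lk_{\sigma}(K) \mid v \notin \tau\}$. Either way, the remaining step is to show that the geometric realization $|\operatorname{cone}_v(L)|$ is contractible. Since $L$ contains $\emptyset$ we have $\{v\} = \emptyset \cup v \in \operatorname{cone}_v(L)$, so the realization is nonempty; and since every simplex of $\operatorname{cone}_v(L)$ is either a face $\tau$ of $L$ or a face $\tau \cup v$, the straight-line segment from any point $x \in |\operatorname{cone}_v(L)|$ to the apex $v$ lies inside the realization of a single simplex, hence inside $|\operatorname{cone}_v(L)|$. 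Thus $H(x,t) = (1-t)\,x + t\,v$ is a well-defined strong deformation retraction of $|\operatorname{cone}_v(L)|$ onto $v$, so $|\operatorname{cone}_v(L)|$ is contractible; combined with the first step, $Lk_{\sigma}(K)$ is contractible.

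I do not expect a genuine obstacle here, since the nontrivial combinatorics lives in Lemma~4.5. The only point calling for a word of care is the degenerate case in which $Lk_{\sigma}(K)$ has no vertices at all --- the void or empty complex, whose realization is empty and hence not contractible. This does not arise: $\sigma \neq f_\sigma$ forces $\sigma \subsetneq f_\sigma$, and $f_\sigma$ is a face of $K$ (an intersection of faces), so $\sigma$ is not a facet and $Lk_{\sigma}(K)$ contains at least one vertex; moreover $\operatorname{cone}_v(L)$ always contains its apex $v$ in any case.
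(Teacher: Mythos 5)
Your proof is correct. Note that the paper itself supplies no proof of this statement: both the preceding lemma and this corollary are imported with citation from Curto et al.\ \cite{curto2017makes} and used as black boxes, so there is no in-paper argument to compare against. Your derivation is exactly the natural one: negate the equivalence in Lemma~4.5 to get the cone conclusion, then invoke the standard fact that a simplicial cone is contractible via the straight-line deformation retraction to its apex. The two extras you include are both sound and worth keeping in mind — the explicit cone decomposition (pick $v\in f_\sigma\setminus\sigma$; every face of $K$ containing $\sigma$ sits inside a facet, and that facet contains $f_\sigma\ni v$, so adjoining $v$ stays in $Lk_\sigma(K)$, giving $Lk_\sigma(K)=\operatorname{cone}_v(L)$ with $L=\{\tau\in Lk_\sigma(K)\mid v\notin\tau\}$), and the check that $Lk_\sigma(K)$ is not the irrelevant complex $\{\emptyset\}$ (it contains at least the vertex $v$, so its realization is nonempty and the retraction to $v$ makes sense). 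No gaps.
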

\subsection{Mandatory Sets} \label{manset}

Cracking the information packed in a neural code is one of the central challenges of neuroscience. We focus our attention on open convex codes. Curto et al. \cite{curto2017makes} have studied local obstructions to open convexity, a notion first introduced in \cite{giusti2014no}. Curto et al. \cite{curto2017makes} show that a code with one or more local obstructions cannot be open convex. In one of their results they prove that
\begin{lemma}\cite[Lemma 1.2]{curto2017makes}
	If a code $\C$ is open convex, then $\C$ has no local obstructions.
\end{lemma}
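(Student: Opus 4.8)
The plan is to translate an open convex realization of $\C$ into properties of the regions it cuts out, and then read off contractibility of the relevant links from the Nerve Lemma. Recall that $\sigma$ is a \emph{local obstruction} of $\C$ when $\sigma\in\Delta(\C)\setminus\C$ and $Lk_\sigma(\Delta(\C))$ is not contractible, so that "$\C$ has no local obstructions" means precisely that $Lk_\sigma(\Delta(\C))$ is contractible for every $\sigma\in\Delta(\C)\setminus\C$. Assume $\C=\C(\U)$ with $\U=\{U_1,\dots,U_n\}$ a family of open convex subsets of some $\R^k$, and for $\sigma\subseteq[n]$ set $U_\sigma:=\bigcap_{i\in\sigma}U_i$ (with the convention $U_\emptyset:=\R^k$); each $U_\sigma$ is open and convex. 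The first step is the routine dictionary $\sigma\in\Delta(\C)\iff U_\sigma\neq\emptyset$: one direction is immediate, since $\sigma\subseteq\beta\in\C$ forces $U_\sigma\supseteq U_\beta\neq\emptyset$; conversely any point $p\in U_\sigma$ lies in the atom indexed by $\beta=\{i:p\in U_i\}\supseteq\sigma$, so $\beta\in\C$ and hence $\sigma\in\Delta(\C)$.

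Next I would fix $\sigma\in\Delta(\C)\setminus\C$. Since $\sigma\notin\C$, its atom $U_\sigma\setminus\bigcup_{i\notin\sigma}U_i$ is empty, i.e.\ $U_\sigma\subseteq\bigcup_{i\notin\sigma}U_i$, so $\{\,U_\sigma\cap U_i:i\in[n]\setminus\sigma\,\}$ is an open cover of $U_\sigma$. For any $\tau\subseteq[n]\setminus\sigma$ one has $\bigcap_{i\in\tau}(U_\sigma\cap U_i)=U_{\sigma\cup\tau}$, which is convex and therefore empty or contractible; thus this is a good cover of the (paracompact) space $U_\sigma$, and the Nerve Lemma yields that its nerve is homotopy equivalent to $U_\sigma$, which is convex and hence contractible.

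The last step is to recognize the nerve: by construction it equals $\{\,\tau\subseteq[n]\setminus\sigma:U_{\sigma\cup\tau}\neq\emptyset\,\}$, and by the dictionary above $U_{\sigma\cup\tau}\neq\emptyset$ exactly when $\sigma\cup\tau\in\Delta(\C)$; together with $\tau\cap\sigma=\emptyset$ this is precisely $Lk_\sigma(\Delta(\C))$. Hence $Lk_\sigma(\Delta(\C))$ is contractible, and since $\sigma\in\Delta(\C)\setminus\C$ was arbitrary, $\C$ has no local obstructions.

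The step that demands the most care is the invocation of the Nerve Lemma: one must check that $\{U_\sigma\cap U_i\}_{i\notin\sigma}$ really is a \emph{good} open cover (every finite subintersection is $U_{\sigma\cup\tau}$, convex, so empty or contractible) and that $U_\sigma$, being open in $\R^k$, is paracompact, so the hypotheses of the lemma hold. A minor extra point is the bookkeeping around $\sigma=\emptyset$ and the convention $U_\emptyset=\R^k$: if $\emptyset\notin\C$ then $\bigcup_iU_i=\R^k$ and the same argument identifies $Lk_\emptyset(\Delta(\C))=\Delta(\C)$ with the homotopy type of $\R^k$, i.e.\ of a point, while if $\emptyset\in\C$ there is nothing to verify at $\emptyset$.
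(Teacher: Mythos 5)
The paper does not prove this lemma; it is cited from \cite{curto2017makes} (attributed there to \cite{giusti2014no}) and used as a black box, so there is no internal argument to compare against. Your Nerve-Lemma proof is the standard one from those references and it is correct: $\sigma\notin\C$ forces $U_\sigma\subseteq\bigcup_{i\notin\sigma}U_i$, the cover $\{U_\sigma\cap U_i\}_{i\notin\sigma}$ of the nonempty convex open set $U_\sigma$ is good because every finite intersection $U_{\sigma\cup\tau}$ is convex (hence empty or contractible), its nerve coincides with $Lk_\sigma(\Delta(\C))$ under the dictionary $\tau\in\Delta(\C)\Leftrightarrow U_\tau\neq\emptyset$ (and the vertex sets match for the same reason: $i$ is a vertex of the nerve iff $U_\sigma\cap U_i\neq\emptyset$ iff $\sigma\cup\{i\}\in\Delta(\C)$ --- worth stating explicitly before declaring the two complexes equal), and the Nerve Lemma transfers contractibility of $U_\sigma$ to the link. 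One caveat about the tail of your argument: the surrounding literature, and in particular this paper's own use of $\C_{\min}(\Delta)\ni\emptyset$, implicitly assume $\emptyset\in\C$; under that convention your closing edge case $\sigma=\emptyset$ never arises, and indeed if one drops the convention the paper's equivalence ``no local obstructions $\Leftrightarrow\C\supseteq\C_{\min}(\Delta)$'' would fail for exactly the degenerate realization with $\bigcup_i U_i=\R^k$ that you consider, so it is cleaner to invoke the convention than to argue around it.
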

This fact was first observed in \cite{giusti2014no}, using slightly different language. The converse, unfortunately, is not true. Perhaps, Curto et al. \cite{curto2017makes} gave a complete characterization of local obstructions in their main result, Theorem 1.3 in \cite{curto2017makes}.

By the contra positive of the above lemma, a code can not be open convex if it has a local obstruction. In fact, the question of whether a given code $\C$ has a local obstruction can be reduced to the question of whether it contains a certain minimal code, $\C_{\min}(\Delta)$, which depends only on the simplicial complex $\Delta=\Delta(\C)$. 
Let us give the definition of the set $\C_{\min}(\Delta)$, as given in \cite{curto2017makes}. They define it as follows:
$$\C_{\min}(\Delta) = \{ \sigma\in \Delta\mid\Ld{\sigma}  \text{ is non contractible } \}\hspace{1mm}\cup \hspace{1mm}\{\emptyset\}.$$

By Theorem 1.3 in \cite{curto2017makes}, any code $\C$ with simplicial complex $\Delta$ has no local obstructions precisely when $\C\supseteq \C_{\min}(\Delta)$.  Thus, the elements of $\C_{\min}(\Delta)$ are regarded as ``mandatory'' codewords with respect to open convexity, because they must all be included in the code $\C$ in order for it to be open convex. 

Note that by the definition of $\C_{\min}(\Delta)$, it depends only on the topology of the links of $\Delta$. The set $\C_{\min}(\Delta)$ contains all the facets of the simplicial complex $\Delta$. This is because for any facet $\sigma\in \Delta$, $Lk_\sigma(\Delta) =\emptyset$ which is non contractible, and thus $\C_{\min}(\Delta)$ contains all the facets of $\Delta$.

Computing $\C_{\min}(\Delta)$ is certainly simpler than finding all local obstructions. However, it is still difficult in general because determining whether or not a simplicial complex is contractible is undecidable \cite{tancer2016recognition}. For this reason,
we now consider a subset of $\C_{\min}(\Delta)$ corresponding to non contractible links that can be detected via homology.  The definition of this subset as given in \cite{curto2017makes} is
$$\mathcal{M}_H(\Delta) = \{\sigma\in\Delta \mid \text{dim } \widetilde{H}_i(Lk_\sigma(\Delta)), k) > 0 \text{ for some }i\},$$
where the $\widetilde{H}_i(.)$'s are reduced simplicial homology groups, and k is a field. Homology groups are topological invariants that can be easily computed for any simplicial complex, and reduced
homology groups simply add a shift in the dimension of $\widetilde{H}_0(.)$. This shift is designed so that for any contractible space $X$, dim $\widetilde{H}_i(X; k) = 0$ for all integers $i$. Observe that if $\widetilde{H}_i(Lk_\sigma(\Delta)) > 0$ for some $\sigma\in\Delta(\C)$ then $Lk_\sigma(\Delta)$ is not contractible. Hence we always have $\mathcal{M}_H(\Delta) \subseteq \C_{\min}(\Delta)$  for any code $\C$, and $\mathcal{M}_H(\Delta)$ is thus a subset of the mandatory codewords that must be included in any open convex code $\C$ with $\Delta(\C) = \Delta$. Note that while $\mathcal{M}_H(\Delta)$ depends on the choice of field $k$, $\mathcal{M}_H(\Delta)\subseteq \C_{\min}(\Delta)$ for any $k$. We can in fact algorithmically compute the subset, $\mathcal{M}_H(\Delta)$. We will later discuss how to compute $\mathcal{M}_H(\Delta)$ using machinery from combinatorial commutative algebra. 
\subsection{Stanley-Reisner (SR) ring}
In this section, we define some algebraic objects connected to combinatorics. We use these objects to compute  homologically detectable mandatory codewords, algebraically. Let $ k$ be a field and let us denote  $ \mathcal{P}(n) =k[x_1,\dots,x_n] $  the polynomial ring over $ k $ in $ n $ indeterminates. We fix this field for rest of the paper. So, we will not mention the field, and it will be implicitly considered to be $k$ only.  Denote $ x^\sigma =\prod_{i\in\sigma}{x_i} $ for any $ \sigma\subseteq [n]. $
\begin{definition}[SR ideal and SR ring] \cite[Definition 1.6]{miller2004combinatorial}
	The Stanley-Reisner ideal of a simplicial complex $ K $ on $ n $ vertices is the ideal $$I_K=\langle x^\tau \mid \tau \subseteq [n] \text{ and } \tau\notin K \rangle$$ generated by the monomials corresponding to the non-faces $ \tau $ of $ K. $ The Stanley-Reisner ring of $ K $ is the quotient ring $  \mathcal{P}(n)/I_K. $
\end{definition}
\begin{eg}
	Let $K=\{\emptyset,1,2\}$ be a simplicial complex on 2 vertices. As $12$ is the only face not in $K$ we have $I_K=\langle x_{1}x_{2}\rangle$. The Stanley-Reisner ring of $K$ is $ \mathcal{P}(2)/\langle x_{1}x_{2}\rangle=k[x_1,x_2]/\langle x_{1}x_{2}\rangle \cong k[x_1]\oplus k[x_2]. $
\end{eg}

\no We write the ideal generated by subsets of $\{x_1,\dots,x_n\}$ as $m^\tau= \langle x_i \mid i\in \tau\rangle$ corresponding to $\tau\subseteq [n]$. Next, we define the Alexander dual of a monomial ideal. 
\begin{definition} [Alexander dual of a monomial ideal]
	Let $I=\langle x^{\sigma_{1}},\dots,x^{\sigma_{r}}\rangle $, for some $ \sigma_i \subseteq [n] $	and $r\in \N$. Then the Alexander dual of $I$  is $$I^*=\mathfrak{m}^{\sigma_1}\cap \cdots\cap \mathfrak{m}^{\sigma_r}.$$
\end{definition} 
\begin{definition}[Alexander dual of a simplicial complex]\cite[Definition 1.35]{miller2004combinatorial}
	If $ K$ is a simplicial complex and $ I = I_K$  is its Stanley–Reisner ideal, then the simplicial complex $ K^*, $ Alexander dual to $ K, $ is deﬁned by $ I_{K^*} = (I_{K})^*$. 
\end{definition}
Let $\C$ be a code with $\Delta=\Delta(\C)$. We now discuss how to compute the entire set $\mathcal{M}_H(\Delta)$ algebraically, via a minimal free resolution of an ideal built from $\Delta$. More specifically, $\mathcal{M}_H(\Delta)$ can also be computed with the following formulation:  
\begin{equation}\label{eqmh}
	\mathcal{M}_H(\Delta)	=\{\sigma\in\Delta\mid \beta_{i,\bar{\sigma}}( \mathcal{P}(n)/I_{\Delta^*})>0 \text{ for some } i >0\},
\end{equation} where $ \beta_{i,\bar{\sigma}}( \mathcal{P}(n)/I_{\Delta^*}) $ are  the Betti numbers of a minimal free resolution of the ring $  \mathcal{P}(n)/I_{\Delta^*}. $ This definition of $\mathcal{M}_H(\Delta)$ comes as a direct consequence of Hochster’s formula \cite[Corollary 1.40]{miller2004combinatorial}, which is
$$ \dim \widetilde{H}_i(\Ld{\sigma},k)= \beta_{i+2,\bar{\sigma}}( \mathcal{P}(n)/I_{K^*}).$$ See \cite{miller2004combinatorial} for more details on Alexander duality, Hochster's formula and SR ideal.

Moreover, the subset of mandatory codewords $\mathcal{M}_H(\Delta)$ can be easily computed using
some computational algebra software, such as Macaulay2 \cite{M2}. In fact, we have used Macaulay2 in an example in Section \ref{projection}, to compute the elements of $\mathcal{M}_H(\Delta).$   

\subsection{Simple and Strong Homotopy type}
\label{simpleandstrongtype}
One essential question in algebraic topology has always been when are two simplicial complexes homotopic?  We discuss a few homotopy preserving moves mentioned in R Ehrenborg's paper   \cite{ehrenborg2006topology}  and J Barmak's book on ``Algebraic topology of finite topological spaces and applications'' \cite{barmak2011algebraic}.  We first discuss simple homotopy.

Let $ K $ be a simplicial complex. Then star of $ \sigma \in K$  is given by $ \sk{\sigma}{K}=\{\tau\in K\mid \sigma\subseteq \tau\}. $ A pair of simplices $ \sigma\subsetneq\tau $  in $ K$ is called a \textit{free face pair} if $ \sk{\sigma}{K}=\{\sigma,\tau\}.$  
\begin{definition}[Elementary collapse and simple homotopy type] Let $ \sigma\subsetneq \tau $ be a free face pair in the simplicial complex $ K.  $  Removal of $ \sigma $ and $ \tau  $ from $ K$ is called an \textit{elementary collapse} of $ K. $ Elementary collapse from $ K $ to $ K\backslash \{\sigma,\tau\} $ is denoted by $ K \searrow^e K\backslash\{\sigma,\tau\}.$ Two simplicial complexes  $ K$ and $ K'$ are said to be of same \textit{simple homotopy type} if there is a finite sequence $ K=K_1,K_2,\dots,K_n=K'$ of simplicial complexes such that  for each index  $ i $ at least one of $ K_i $ and $ K_{i+1} $ may be obtained from the other by an elementary collapse. Simple homotopy between $ K$ and $ K' $ is denoted by $ K\searrow K'. $
\end{definition}

J Barmak \cite{barmak2011algebraic} shows that if there is a simple homotopy between $ K $ and $ K' $ then $ K$ is homotopic to $ K'. $ Next we discuss strong homotopy. 
Let $ v $ be a vertex of a simplicial complex $K $ then $ v $ is said to be \textit{dominated} if there exists a vertex   $ v'$  of $ K $ such that $ \Ld{v}=\cone{v'}{K'}, $ for some sub-complex $ K' $ of $ K. $ Denote $ K \smallsetminus v $ to be the \textit{induced simplicial complex} induced by the vertices of $ K $ except $ v. $

\begin{remark}\cite[Remark 2.2]{barmak2012strong}
	A vertex v is dominated by a vertex $ v'\not=v $ if and only if every facet that contains $ v $ also contains $ v' $. \label{remarkstrongcollapse}
\end{remark} 

\begin{definition}[Strong elementary collapse and strong homotopy type] \cite[Definition 5.1.1]{barmak2011algebraic}
	We say there is a strong elementary collapse from $ K $ to $ K\smallsetminus v $ if $ v $ is a dominated vertex of $ K. $ It is denoted by $ K \Searrow^{e} K\smallsetminus v.$ There is a strong collapse from $ K $ to a sub-complex $ K' $ if there exist a sequence of elementary strong collapses that starts in  $ K $ and ends in $ K'.$  In this case we write $ K\Searrow K' $.  The inverse of strong collapse is strong expansion. Two finite complexes $ K $ and $ K' $ have the same strong homotopy type if there is a sequence of strong collapses and/or strong expansions that starts in $ K $ and ends in $ K' $. 
\end{definition}
J Barmak \cite{barmak2011algebraic} proves that strong homotopy type is a particular case of simple homotopy and that if there exists strong homotopy between $ K  $ and $ K' $ then there exists a simple homotopy too. That is $ K\Searrow K' \implies K \searrow K'.$ Moreover, as simple homotopy implies homotopy,  we get $ K $ is homotopic to $ K' $ whenever $ K\Searrow K'. $  We will use all these concepts later to prove our main results.

\subsection{Neural ring homomorphisms} \label{nrh}
Curto et al. \cite{curto2013neural} associated a ring  $ \ring{\C} $ for a given code $ \C $, and  called it the neural ring associated to the code $\C$. For a given code $ \C, $ the neural ring is obtained as $ \ring{\C}=\mathbb{F}_2[y_1,\dots,y_n]/I_{\C}, $ where $ I_\C=\{f\in\mathbb{F}_2[x_1,x_2,\dots,x_n] | f(c)=0 \text{ for all } c\in \C\}. $  Further, Curto and Youngs \cite{curto2020neural}, studied the ring homomorphisms between two neural rings. They showed that there is a 1-1 correspondence between code maps $ q:\C\rar \D $ and the ring homomorphisms $ \phi:\ring{\D}\rar \ring{\C}. $ The associated code map of the ring homomorphism $ \phi $ is denoted as $ q_\phi $.  They also demonstrated that $ |\C|=|\D| $ is the only condition under which $ \ring{\C} \cong \ring{\D} $. This means that the ring homomorphisms ignore the nature of codewords that are present in the code and only take its cardinality into account. So, to capture the essence of the codewords of a code and not just its cardinality, \cite{curto2020neural} added some additional pertinent conditions to the ring homomorphisms and named these maps neural ring homomorphisms. We will now give detailed definition of a neural ring homomorphisms.
\begin{definition}[Neural ring homomorphism]\cite[Definition 3.1]{curto2020neural}
	Let $ \C  $ and $ \D $ be codes on $ n $ and $ m $ neurons respectively, and let $ \ring{\C}= \mathbb{F}_2[y_1,\dots,y_n]/I_{\C} $ and $ \ring{\D}= \mathbb{F}_2[x_1,\dots,x_m]/I_{\D}  $ be the corresponding neural rings.  A ring homomorphism $ \phi:\ring{\D}\rar \ring{\C} $ is a neural ring homomorphism if $ \phi(x_j)\in\{y_i\mid i \in [n]\} \cup \{0,1\} $ for all $ j\in [m].$  We say that a neural ring homomorphism $ \phi $ is a neural ring isomorphism if it is a ring isomorphism and its inverse is also a neural ring homomorphism. 
\end{definition}    
\no Further, Curto and Youngs showed that given any neural ring homomorphism $ \phi $, the associated code map $ q_\phi $ can only be a composition of five elementary code maps. We state this theorem below.
\begin{theorem}\cite[Theorem 3.4]{curto2020neural} \label{thmnycc1}
	A map $ \phi:\ring{\D}\rar \ring{\C} $ is a neural ring homomorphism if and only if $ q_\phi $ is a composition of the following elementary code maps: \\ 1. Permutation, 2. Adding a trivial neuron (or deleting a trivial neuron), 3. Duplication of a neuron (or deleting a neuron that is a duplicate of another), 4. Neuron projection (or deleting a not necessarily trivial neuron), 5. Inclusion (of one code into another).
	\\ \no 	Moreover, $ \phi $ is a neural ring isomorphism if and only if $ q_\phi $ is a composition of maps $ (1)-(3). $   
\end{theorem}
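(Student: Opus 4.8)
The plan is to prove both implications by bookkeeping the images of the linear generators. Recall that $\phi:\ring{\D}\rar\ring{\C}$ corresponds to the code map $q_\phi:\C\rar\D$ determined by $q_\phi(c)_j=\phi(x_j)(c)$ for $c\in\C$, where an element of $\ring{\C}$ is regarded as an $\mathbb{F}_2$-valued function on $\C$. When $\phi$ is a neural ring homomorphism, $\phi(x_j)\in\{y_1,\dots,y_n\}\cup\{0,1\}$, so there is an assignment $\lambda:[m]\rar[n]\cup\{\ast_0,\ast_1\}$ with $\phi(x_j)=y_{\lambda(j)}$, $0$, or $1$ according to whether $\lambda(j)\in[n]$, $\lambda(j)=\ast_0$, or $\lambda(j)=\ast_1$; consequently $q_\phi(c)_j$ equals $c_{\lambda(j)}$, $0$, or $1$ accordingly. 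Everything reduces to reading the five elementary moves off from $\lambda$.

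For the implication ``$q_\phi$ a composition of elementary maps $\implies\phi$ a neural ring homomorphism'', it suffices to check two things. First, each elementary code map is induced by a neural ring homomorphism, which one writes down explicitly on generators: a permutation gives $x_j\mapsto y_{\pi(j)}$; adding a trivial neuron sends the new generator to $0$ or $1$; duplication sends two generators to the same $y_i$; projection/deletion sends the retained generators to the matching $y_i$; and the inclusion $\C\subseteq\D$ gives the quotient map $\ring{\D}\twoheadrightarrow\ring{\C}$, $x_j\mapsto y_j$, which is well defined since $I_\D\subseteq I_\C$. Second, a composition of neural ring homomorphisms is again one: this is immediate because $\phi(0)=0$ and $\phi(1)=1$, and because $q_{(-)}$ is contravariant, so $q_{\phi\circ\psi}$ is the composition of $q_\psi$ and $q_\phi$ in the matching order.

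The substantive direction is ``$\phi$ a neural ring homomorphism $\implies q_\phi$ a composition''. Writing $S=\lambda([m])\cap[n]$, I would realize $q_\phi$ through the chain
\[
\C\;\longrightarrow\;\C_1\;\longrightarrow\;\C_2\;\longrightarrow\;\C_3\;\longrightarrow\;\C_4\;\hookrightarrow\;\D,
\]
where $\C_1$ is obtained from $\C$ by deleting every neuron not in $S$ (a composition of type-4 maps); $\C_2$ is obtained from $\C_1$ by duplicating each $i\in S$ a total of $|\lambda^{-1}(i)|-1$ times, so that used neurons occur with the correct multiplicity (type 3); $\C_3$ is obtained from $\C_2$ by adjoining, for each $j$ with $\lambda(j)\in\{\ast_0,\ast_1\}$, a trivial neuron of constant value $0$ or $1$, reaching $m$ neurons (type 2); and $\C_4$ is obtained from $\C_3$ by permuting its neurons into the order dictated by $\lambda$ (type 1). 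Chasing a codeword $c\in\C$ along the first four arrows, its $j$-th coordinate becomes exactly $c_{\lambda(j)}$, $0$, or $1$, that is, $q_\phi(c)_j$; hence the composite of these four maps is $c\mapsto q_\phi(c)$, with image $q_\phi(\C)$. Since $\phi$ is a ring homomorphism into $\ring{\C}$ it kills $I_\D$, which forces $q_\phi(\C)\subseteq\D$, so the final arrow is a genuine inclusion (type 5), completing the decomposition.

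For the ``moreover'' statement, note that maps of types 1--3 are precisely the bijective elementary moves whose inverses are again of types 1--3 (permutation inverts to a permutation, adding a trivial neuron to deleting one, duplicating a neuron to deleting a duplicate, and conversely); hence a composition of such maps is a bijection realized by a ring isomorphism whose inverse is a neural ring homomorphism, i.e.\ a neural ring isomorphism. Conversely, if $\phi$ is a neural ring isomorphism then $\phi^{-1}$ is also a neural ring homomorphism, and one checks that the decomposition above then involves only types 1--3: an unused neuron $i\in[n]$ must satisfy $y_i=\phi(0)$ or $y_i=\phi(1)$ (apply $\phi^{-1}$ and use that it sends $y_i$ to a generator or to $0,1$), hence is trivial in $\C$ and its deletion is of type 2; a coincidence $\lambda(j_1)=\lambda(j_2)$ forces $x_{j_1}=x_{j_2}$ in $\ring{\D}$ by injectivity, so those neurons are genuine duplicates; and $q_\phi$ is onto, so the inclusion at the end is the identity. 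The main obstacle is making the bookkeeping in the third paragraph watertight: verifying that each intermediate code is well defined, that the order of the deletions, duplications and adjunctions is immaterial, that every arrow is literally the claimed elementary map, and that the composite equals $q_\phi$ as a set map rather than merely inducing the same ring homomorphism --- in particular the step $q_\phi(\C)\subseteq\D$, which is exactly the assertion that $\phi$ kills $I_\D$ and which is what legitimizes the final inclusion.
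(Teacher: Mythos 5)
This theorem is quoted verbatim from Curto and Youngs \cite{curto2020neural}; the present paper cites it without proof, so there is no in-paper argument to compare against. Your reconstruction is, modulo bookkeeping you yourself flag, the standard argument from that source: read the assignment $\lambda$ off the images $\phi(x_j)$, factor $q_\phi$ as (delete unused) $\circ$ (duplicate) $\circ$ (add trivial) $\circ$ (permute) $\circ$ (include), and in the ``moreover'' direction use that $\phi^{-1}$ is itself a neural ring homomorphism to upgrade each factor to types $1$--$3$. Two points worth nailing down when you write it out: first, $\lambda$ need not be uniquely determined when $\ring{\C}$ has coinciding generators (i.e.\ $\C$ already contains duplicate or trivial neurons), so one must fix a choice before the chain of intermediate codes is defined; second, in the isomorphism case you correctly observe that $\phi^{-1}(y_i)\in\{0,1\}$ forces an unused neuron $i$ to be trivial in $\C$, but you should also record the symmetric fact that $\phi(x_j)\in\{0,1\}$ together with injectivity forces $x_j\in\{0,1\}$ in $\ring{\D}$, so the ``add trivial neuron'' steps in your chain are likewise legitimate type-2 moves rather than accidental type-4 ones. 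With those made explicit the decomposition is watertight and matches the cited proof.
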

Now that we have discussed all the prelims required for the paper, we quickly go through the overall structure of the paper. In section 3 we show that if a neural ring homomorphism, $ \phi:\ring{\D}\to\ring{\C} $ is a isomorphism then $ q_\phi(\Mh{\C}) =\Mh{\D}.$ We will prove these results using the fact that if the map $ \phi $ is an isomorphism, then $ q_\phi $ is a composition of elementary code maps (1)-(3) of Theorem \ref{thmnycc1}. Further, we also know if the map $ \phi $ is just a surjection, then $ q_\phi $ is a composition of $ (1)-(4) $ of Theorem \ref{thmnycc1}. In section 4 we show that if $ \phi $ is  surjective then $ \Mh{\D}\subseteq q_\phi(\Mh{\C}). $  In both these sections we will also check when is $ \C_{\min}(\Delta(\C)) $ preserved.  So, now we consider all these elementary code maps individually and show how for every code $\C,\ $ $\mathcal{M}_H(\Delta(\C)) $ and $\Cm{\C}$ behave under the action of elementary code maps (1)-(4) mentioned in Theorem \ref{thmnycc1}. 
\section{Neural ring isomorphisms }
Let $ \phi:\ring{\D}\to\ring{\C} $ be a neural ring isomorphism. Then from Theorem \ref{thmnycc1}, we know that the associated code map is a composition of permutation, adding/deleting a trivial neuron or adding/deleting a duplicate neuron. We will examine all these elementary code maps individually to to see their action on sets, $\Mh{\C}$ and $\Cm{\C}$.

\subsection{Permutation}
We will denote the symmetric group on $n$ elements as $S_n$. Let $ \C $ be a code on $ n $ neurons and obtain $ \C' $ by permuting the neurons of each element of $ \C $ by a permutation   $ \gamma\in S_n.$ In other words, an element  $ \sigma=\sigma_{1}^{}\sigma_{2}^{}\cdots\sigma_{n}\in\C $ will be mapped to $ \sigma'=\sigma_{\gamma(1)}^{}\sigma_{\gamma(2)}^{}\cdots\sigma_{\gamma(n)}^{} $ in $ \C'$. Define $ q:\C\to\C' $  as $ \sigma\mapsto \sigma'=\sigma_{\gamma(1)}^{}\sigma_{\gamma(2)}^{}\cdots\sigma_{\gamma(n)}^{}.$ We further extend this map to $ \Delta(\C)$ and abusing the notation we call the extended map also as $ q. $  Let $ \alpha,\ \beta\in\Delta(\C) $ such that  $\alpha\subseteq\beta,  $ and $ \alpha',\beta' $ be the respective images of $\alpha,\ \beta $ under the map $ q $. Observe that  $ \alpha'\subseteq\beta'. $  In other words, if $ \alpha\subseteq\beta\in\Delta(\C) $ then $ q(\alpha)\subseteq q(\beta). $ We know that for any given $ \alpha\in\Delta(\C), $ there always exists $ \beta\in\C $ such that $ \alpha\subseteq\beta. $ Then $ q(\alpha)\subseteq q(\beta)\in\C'\subseteq\Delta(\C'), $ and as $ \Delta(\C') $ is a simplicial complex, $ q(\alpha)\in \Delta(\C'). $ Thus the extended map $ q:\Delta(\C)\to\Delta(\C') $ is well defined. We define a map $ p:\Delta(\C')\to\Delta(\C) $ using $ \gamma^{-1}, $ which sends  $ \alpha'=\alpha'_1\alpha'_2\cdots\alpha'_n $ to $ \alpha =
\alpha'_{\gamma^{-1}(1)}
\alpha'_{\gamma^{-1}(2)}
\cdots\alpha'_{\gamma^{-1}(n)}.$  Just like the map $q,$ the map $p$ is also well defined. 
It is clear that  $ q\circ p=1_{\Delta(\C')} $ and similarly $ p\circ q=1_{\Delta(\C)}. $ Therefore $ q $ is bijective and hence $ q(\Delta(\C))=\Delta(\C'). $ Henceforth we denote the map $ p $ as $ q^{-1}. $ One can consider $ q^{-1} $ to be the permutation map corresponding to $ \gamma^{-1}\in S_n. $ Also, as $ \emptyset=\underbrace{00\cdots0}_{n\text{ times}} $ it is clear that $ q(\emptyset)=\emptyset. $

\no By the nature of the permutation map $ q:\C\to\C' $ we observe that the structure of any link inside $ \Delta(\C) $ is  preserved under the permutation map. In other words, for $\sigma\in\Delta(\C)  $, $ \Lk{\sigma}{\C} \cong \Lk{q(\sigma)}{\C'}. $ This leads us to the following results. 

\begin{theorem}
	Let $ \C $ be a code on $n$ neurons and $ q: \C \to\C'$ be the permutation map corresponding to  $ \gamma\in S_n $ described above. Then \begin{enumerate}
		\item $ q(\Mh{\C})=\Mh{\C'}$
		\item  $ q(\Cm{\C})=\Cm{\C'}.$
	\end{enumerate} \label{thprem}
\end{theorem}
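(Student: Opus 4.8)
The plan is to leverage the single structural observation already highlighted in the text, namely that the permutation map $q$ induces an isomorphism of simplicial complexes $\Delta(\C)\cong\Delta(\C')$ which carries links to links: for every $\sigma\in\Delta(\C)$ we have $\Lk{\sigma}{\C}\cong\Lk{q(\sigma)}{\C'}$. Everything reduces to unwinding the definitions of $\Mh{\cdot}$ and $\Cm{\cdot}$ and observing that they depend only on the isomorphism type of $\Delta$ together with the topology (or homology) of its links. First I would record carefully why $q$ restricted to $\Delta(\C)$ is a simplicial isomorphism: this is already argued in the text via the two-sided inverse $p=q^{-1}$ coming from $\gamma^{-1}$, and the fact that $\alpha\subseteq\beta$ iff $q(\alpha)\subseteq q(\beta)$. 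From this it follows immediately that $\sigma$ is a facet of $\Delta(\C)$ iff $q(\sigma)$ is a facet of $\Delta(\C')$, and more generally that $q$ maps faces to faces preserving all incidences.

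Next I would prove part (2). Fix $\sigma\in\Delta(\C)$. Because $q$ is a simplicial isomorphism sending $\sigma$ to $q(\sigma)$ and inducing an isomorphism $\Lk{\sigma}{\C}\cong\Lk{q(\sigma)}{\C'}$ of simplicial complexes, the geometric realizations are homeomorphic; in particular $\Lk{\sigma}{\C}$ is contractible if and only if $\Lk{q(\sigma)}{\C'}$ is contractible. Hence $\sigma\in\Cm{\C}\setminus\{\emptyset\}$ iff $q(\sigma)\in\Cm{\C'}\setminus\{\emptyset\}$, and since $q(\emptyset)=\emptyset$ we get $q(\Cm{\C})=\Cm{\C'}$. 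For part (1) the argument is the same, with ``contractible'' replaced by ``has vanishing reduced homology'': a simplicial isomorphism induces isomorphisms $\widetilde H_i(\Lk{\sigma}{\C};k)\cong\widetilde H_i(\Lk{q(\sigma)}{\C'};k)$ for all $i$, so $\dim\widetilde H_i(\Lk{\sigma}{\C};k)>0$ for some $i$ iff the same holds for $q(\sigma)$; therefore $q(\Mh{\C})=\Mh{\C'}$. One could alternatively phrase this through the Betti-number formulation \eqref{eqmh}, noting that relabelling the variables $x_i\mapsto x_{\gamma(i)}$ is a graded ring isomorphism $\mathcal{P}(n)\to\mathcal{P}(n)$ carrying $I_{\Delta(\C)^*}$ to $I_{\Delta(\C')^*}$ and $\bar\sigma$ to $\overline{q(\sigma)}$, hence preserving all graded Betti numbers; but the homological argument via the link isomorphism is cleaner and I would present that.

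There is no serious obstacle here; the only thing needing care is the bookkeeping between the ``on $n$ neurons'' viewpoint (binary strings, coordinate positions) and the ``subset of $[n]$'' viewpoint, to make sure the claimed isomorphism $\Lk{\sigma}{\C}\cong\Lk{q(\sigma)}{\C'}$ is literally the restriction of $q$ and that it is a bijection of face sets respecting inclusion. Once that is pinned down, both statements follow formally, and I would keep the write-up short, citing the already-established bijectivity of $q$ on $\Delta(\C)$ and the definitions of $\Cm{\cdot}$ and $\Mh{\cdot}$ from Section \ref{manset}. The main point to emphasize is that this permutation case is the prototype for the harder elementary code maps to come: it is the one case where nothing is lost, so both $\Mh{\cdot}$ and $\Cm{\cdot}$ transfer exactly, whereas for projection one expects only containments.
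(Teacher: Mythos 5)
Your proposal is correct and follows essentially the same route as the paper: the key step in both is the observation that $q$ restricts to a simplicial isomorphism carrying $\Lk{\sigma}{\C}$ to $\Lk{q(\sigma)}{\C'}$, from which preservation of contractibility (for $\Cm{\cdot}$) and of reduced homology (for $\Mh{\cdot}$) follows immediately, together with $q(\emptyset)=\emptyset$ and the bijectivity of $q$ on $\Delta(\C)$. The Betti-number variant you mention is also exactly what the paper records as a remark after Theorem~\ref{SRpermutation}.
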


\begin{proof}
	Proof of first part: Let $ q(\sigma)\in q(\Mh{\C}), $ where $ \sigma\in\Mh{\C} $. By definition, we get $ \dim \widetilde{H}_i( \Lk{\sigma}{(\C)})>0 $ for some $ i. $ Also, by previous observation  $ \widetilde{H}_i(\Lk{\sigma}{\C})\cong \widetilde{H}_i(\Lk{q(\sigma)}{\C'}) $ so  $  \dim \widetilde{H}_i(\Lk{q(\sigma)}{(\C')})>0 $ and therefore $ q(\sigma)\in\Mh{\C'}. $ So,  $ q(\Mh{\C}) \subseteq\Mh{\C'}.$ Conversely, since $ q $ is surjective, we have for any $ \sigma'\in\Mh{\C'} \subseteq \Delta(\C')$ there exists $ \sigma\in \Delta(\C) $ such that $ q(\sigma) =\sigma'$. As $ \sigma'\in\Mh{\C'} $ we have $  \dim\widetilde{H}_i(\Lk{q(\sigma)}{\C'}) >0$.  Also, as   $ \widetilde{H}_i(\Lk{\sigma}{\C})\cong \widetilde{H}_i(\Lk{q(\sigma)}{\C'}) $  we have $ \dim \widetilde{H}_i(\Lk{\sigma}{\C}) >0.$ This implies $ \sigma\in\Mh{\C} $ which gives $ q(\sigma)\in q(\Mh{\C}) $. Therefore $  \Mh{\C'}\subseteq q(\Mh{\C}). $  Hence the result.   
	
	\no Proof of Second part: For any $\sigma\in\Delta(\C)  $ as $ \Lk{\sigma}{\C} \cong \Lk{q(\sigma)}{\C'}$ we have $ \Lk{\sigma}{\C} $ is not contractible if and only if $ \Lk{q(\sigma)}{\C'} $ is not contractible. Using this fact we can construct the proof of this part similar to first part.
\end{proof}

\no \textbf{Relation between SR rings of $ \Delta(\C) $ and $ \Delta(\C'). $} \\
\no  In the case of permutation, it is clear that $\C $ and $\C’$ are bijective sets. Even $\Delta(\C)$ and $\Delta(\C’)$
are isomorphic as simplicial complexes. We further show that even the SR rings of $\Delta(\C)$
and $\Delta(\C’)$ are isomorphic.
\begin{theorem}
	Let $ \C $ be a code on $n$ neurons and $ q:\C\to\C' $ be the permutation map corresponding to $ \gamma\in S_n $ described above. Then  SR rings of $ \Delta(\C) $ and $ \Delta(\C') $ are isomorphic. \label{SRpermutation}
\end{theorem}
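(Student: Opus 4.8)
The permutation $\gamma\in S_n$ relabels the indeterminates of $\mathcal{P}(n)=k[x_1,\dots,x_n]$, and the plan is to turn this relabelling into a $k$-algebra automorphism of $\mathcal{P}(n)$ that carries one Stanley--Reisner ideal onto the other. Concretely, I would define $\Phi\colon\mathcal{P}(n)\to\mathcal{P}(n)$ to be the unique $k$-algebra homomorphism with $\Phi(x_j)=x_{\gamma^{-1}(j)}$ for each $j\in[n]$; it is an automorphism, with inverse the algebra map sending $x_j\mapsto x_{\gamma(j)}$. The first routine check is that $\Phi$ agrees with the code map on monomials: for $\tau\subseteq[n]$ we have $\Phi(x^\tau)=\prod_{j\in\tau}x_{\gamma^{-1}(j)}=x^{\gamma^{-1}(\tau)}=x^{q(\tau)}$, where I use that, read as a map of subsets of $[n]$, $q$ sends $\tau$ to $\{\,i\mid\gamma(i)\in\tau\,\}=\gamma^{-1}(\tau)$ (this is exactly the coordinate description $\sigma'_i=\sigma_{\gamma(i)}$ fixed in the paragraph defining $q$).

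Next I would show $\Phi(I_{\Delta(\C)})=I_{\Delta(\C')}$. Recall $I_{\Delta(\C)}=\langle x^\tau\mid\tau\subseteq[n],\ \tau\notin\Delta(\C)\rangle$ and similarly for $\Delta(\C')$. The permutation map $q$, being induced by $\gamma\in S_n$ acting on coordinates, is a bijection of $2^{[n]}$, and as shown in the discussion preceding this theorem it restricts to a bijection $\Delta(\C)\to\Delta(\C')$; hence it also restricts to a bijection between the non-faces, i.e.\ $\tau\notin\Delta(\C)$ if and only if $q(\tau)\notin\Delta(\C')$. Therefore $\Phi$ sends the generating set $\{x^\tau\mid\tau\notin\Delta(\C)\}$ bijectively onto $\{x^{q(\tau)}\mid\tau\notin\Delta(\C)\}=\{x^\rho\mid\rho\notin\Delta(\C')\}$, which is the generating set of $I_{\Delta(\C')}$. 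Since $\Phi$ is a ring automorphism, it carries the ideal generated by the first set onto the ideal generated by the second, so $\Phi(I_{\Delta(\C)})=I_{\Delta(\C')}$.

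Finally, an automorphism of $\mathcal{P}(n)$ taking $I_{\Delta(\C)}$ onto $I_{\Delta(\C')}$ descends to a $k$-algebra isomorphism of the quotients $\mathcal{P}(n)/I_{\Delta(\C)}\xrightarrow{\ \sim\ }\mathcal{P}(n)/I_{\Delta(\C')}$, which is precisely the statement that the SR rings of $\Delta(\C)$ and $\Delta(\C')$ are isomorphic. I do not expect any genuine obstacle here; the only thing to watch is the bookkeeping of $\gamma$ versus $\gamma^{-1}$ so that $\Phi(x^\tau)=x^{q(\tau)}$ holds with the conventions already fixed for $q$. One could alternatively run the argument without mentioning $q$ at all—simply note $\Delta(\C')=\{\gamma^{-1}(\sigma)\mid\sigma\in\Delta(\C)\}$ and that a variable-relabelling automorphism always identifies the SR ideal of a complex with that of its relabelling—but phrasing it through $q$ keeps the proof consistent with the rest of the section.
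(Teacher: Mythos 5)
Your proof is correct and takes essentially the same approach as the paper: define the $k$-algebra automorphism of $\mathcal{P}(n)$ induced by relabelling variables along the permutation, check that it carries one Stanley--Reisner ideal onto the other because $q$ restricts to a bijection of non-faces, and conclude that the quotients are isomorphic. One small remark: the paper's proof actually phrases everything in terms of the Alexander-dual ideals $I_{\Delta(\C)^*}$ and $I_{\Delta(\C')^*}$ (consistently with Equation~\eqref{eqmh}, which computes $\mathcal{M}_H$ from $\mathcal{P}(n)/I_{\Delta^*}$), whereas you work directly with $I_{\Delta(\C)}$ and $I_{\Delta(\C')}$, which matches the literal statement of the theorem more closely; since a variable-relabelling automorphism commutes with Alexander duality, the two formulations are interchangeable, so this is a cosmetic difference rather than a different argument.
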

\begin{proof}
	Given $\gamma\in S_n$, define a map $ \Gamma: \mathcal{P}(n)\to  \mathcal{P}(n)$ on the generators $ \{x_i\}_{i\in[n]} $ by $ x_i\mapsto x_{\gamma(i)}. $ Then define $ \Phi_\Gamma: \mathcal{P}(n)/ I_{\Delta(\C)^*} \to  \mathcal{P}(n)/ I_{\Delta(\C')^*}, $ by $ f+I_{\Delta(\C)^*}\to \Gamma(f) + I_{\Delta(\C')^*} $ for any $ f\in  \mathcal{P}(n). $ Observe that the bijection between $ \Delta(\C) $ and $ \Delta(\C') $ gives $ I_{\Delta(\C')^*}=\Gamma(I_{\Delta(\C)}^*) $ making the map $ \Phi_\Gamma $ well defined. Moreover, this map $ \Phi_\Gamma  $ will be a ring isomorphism. Hence the result.
\end{proof}

\begin{remark}
	Now the proof of Theorem \ref{thprem} can also be obtained using Theorem \ref{SRpermutation} and Equation \ref{eqmh} which is the algebraic form of $ \mathcal{M}_H(\Delta(\C)). $
\end{remark}
\subsection{Adding a trivial neuron}
Let $ \C $ be a code on $ n $ neurons and obtain $\C'  $ by adding a trivial `on' neuron, i.e., for any $ \sigma=\sigma_1^{} \sigma_2^{}\cdots\sigma_{n}^{}\in\C $, we will form $ \sigma'=\sigma_1^{} \sigma_2^{}\cdots\sigma_{n}^{}1 $ in $ \C'. $  We denote $ \sigma'=\sigma1. $ Let $ q:\C\to\C' $ be a code map given by $ \sigma \mapsto \sigma1. $  Further, extend the map $ q $ to $ \Delta(\C) $ and abusing the notation call the extended map also as $ q. $ 
Given any $ \alpha\in\Delta(\C) $ there exists $\beta\in\C $ such that $ \alpha\subseteq\beta. $ So, $ q(\beta)=\beta1\in\C'\subseteq\Delta(\C'). $ Also, as $ \alpha1\subseteq\beta1 $ we get $ q(\alpha)\subseteq q(\beta)\in \Delta(\C'). $ Since, $ \Delta(\C') $ is a simplicial complex, $ q(\alpha)\in\Delta(\C'). $ Therefore $ q(\Delta(\C))\subseteq\Delta(\C').$ So, the extension map $ q:\Delta(\C) \rar\Delta(\C')$ is well defined.  Moreover, as $ \C\subseteq \Delta(\C) $ and $ q(\C)=\C' $ we get $ \C'=q(\C)\subseteq q(\Delta(\C)). $ Thus $  \C'\subseteq q(\Delta(\C))\subseteq\Delta(\C') $ and by Remark \ref{lemimportant} we have \begin{equation}\label{eqaddqdelta}
	\Delta(q(\Delta(\C)))=\Delta(\C'). 
\end{equation}
Now, we establish series of lemmas before we prove our main theorem.
\begin{lemma}
	Let $ \C $ be a code on $n$ neurons and $ q:\C\to \C' $ be the 	 map described above. If $ \alpha'\in\Delta(\C')$ such that $ \alpha'=\alpha1 $ for some $\alpha\subseteq[n]$ then $ \alpha'\in q(\Delta(\C)). $ \label{lemreq}
\end{lemma}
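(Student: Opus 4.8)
The plan is to unwind the definition of $\Delta(\C')$ and use the fact that every codeword of $\C'$ literally carries the trivial neuron as a $1$ in its last slot, so that the containment $\alpha'\subseteq\delta'$ in $\{0,1\}^{n+1}$ restricts coordinatewise to a containment on the first $n$ neurons. Concretely, suppose $\alpha'=\alpha1\in\Delta(\C')$ with $\alpha\subseteq[n]$. By Definition \ref{defscofc} applied to $\C'$, there is a codeword $\delta'\in\C'$ with $\alpha'\subseteq\delta'$. Since $\C'=q(\C)$ and $q$ sends $\sigma\mapsto\sigma1$, we may write $\delta'=\delta1$ for some $\delta\in\C$.

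Next I would compare binary forms. By the containment rule of Remark \ref{remoperations}, $\alpha1\subseteq\delta1$ means $\alpha_i\le\delta_i$ for all $i\in[n]$ and $1\le1$ in the last coordinate; the first $n$ inequalities say precisely $\alpha\subseteq\delta$ as subsets of $[n]$. Since $\delta\in\C\subseteq\Delta(\C)$ and $\alpha\subseteq\delta$, Definition \ref{defscofc} gives $\alpha\in\Delta(\C)$. Hence $q(\alpha)=\alpha1=\alpha'$, so $\alpha'\in q(\Delta(\C))$, as desired.

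The only thing requiring a little care — and it is the closest thing to an obstacle here — is bookkeeping between the $n$-neuron and $(n+1)$-neuron pictures: one must make sure that "$\alpha'=\alpha1$ for some $\alpha\subseteq[n]$" is interpreted as the last binary coordinate of $\alpha'$ being $1$, and that stripping that coordinate is compatible with the containment order, which is exactly what Remark \ref{remoperations}(1) guarantees. Everything else is a direct application of the definition of the simplicial complex of a code, so no homotopy-theoretic or homological input is needed for this lemma; it is a purely combinatorial preparatory step toward the main theorem (and it dovetails with Equation \eqref{eqaddqdelta}, which already records $\Delta(q(\Delta(\C)))=\Delta(\C')$).
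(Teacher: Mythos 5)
Your proof is correct and takes essentially the same route as the paper: find a containing element whose last coordinate is $1$, strip that coordinate to get a containment in $\Delta(\C)$, and push back through $q$. The only cosmetic difference is that you appeal directly to the definition of $\Delta(\C')$ and the fact $\C'=q(\C)$ to produce a codeword $\delta'\in\C'$, whereas the paper routes through Equation~\eqref{eqaddqdelta} to land on a face $\beta'\in q(\Delta(\C))$; both reach the same coordinatewise-containment step.
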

\begin{proof}
	Let $ \alpha'\in\Delta(\C'), $ from Equation \ref{eqaddqdelta}, $\alpha'\in\Delta(q(\Delta(\C))). $ Then there exists $ \beta'\in q(\Delta(\C)) $ such that $ \alpha'\subseteq\beta'. $ This implies, there exists $ \beta\in\Delta(\C) $ such that $ q(\beta) =\beta'$. Since $ q(\beta)=\beta1, $ we have $ \alpha1=\alpha'\subseteq\beta'=\beta1.$ So, $ \alpha\subseteq\beta\in\Delta(\C)$ and as $ \Delta(\C) $ is a simplicial complex, $ \alpha\in\Delta(\C). $ Also, $ q(\alpha)=\alpha1=\alpha' $ we get $ \alpha'\in q(\Delta(\C)). $  Hence the result.  
\end{proof}
\begin{lemma}
	Let $ \C $ be a code on $n$ neurons and $ q:\C\to \C' $ be the map described above. Then for any $ \sigma\in \Delta(\C) $ we get $ \Lk{\sigma}{\C}=\Lk{q(\sigma)}{\C'}. $ \label{lemlinktrv}
\end{lemma}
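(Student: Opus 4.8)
The plan is to identify $\Delta(\C')$ as a cone over $\Delta(\C)$ and then simply unwind the definition of a link. First I would establish that $\Delta(\C')=\cone{n+1}{\Delta(\C)}$, where $n+1$ denotes the newly added trivial `on' neuron. This is immediate from the description of $\C'=q(\C)$: every codeword of $\C'$ is of the form $q(\beta)=\beta\cup\{n+1\}$ with $\beta\in\C$, so for $\alpha'\in\Delta(\C')$ we may pick $\beta\in\C$ with $\alpha'\subseteq q(\beta)=\beta\cup\{n+1\}$, whence $\alpha'\setminus\{n+1\}\subseteq\beta$ and thus $\alpha'\setminus\{n+1\}\in\Delta(\C)$; conversely, if $\tau\setminus\{n+1\}\in\Delta(\C)$, choosing $\beta\in\C$ with $\tau\setminus\{n+1\}\subseteq\beta$ gives $\tau\subseteq\beta\cup\{n+1\}=q(\beta)\in\C'$, so $\tau\in\Delta(\C')$. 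In particular, for every $\tau\subseteq[n+1]$ one has $\tau\in\Delta(\C')$ if and only if $\tau\setminus\{n+1\}\in\Delta(\C)$.

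Next, note that $q(\sigma)=\sigma\cup\{n+1\}$ is a genuine face of $\Delta(\C')$ since $\sigma\in\Delta(\C)$, so the link $\Lk{q(\sigma)}{\C'}$ makes sense. I would then compare the two links directly from the definition of $Lk$. If $\omega\in\Lk{q(\sigma)}{\C'}$, then $\omega\cap(\sigma\cup\{n+1\})=\emptyset$ forces $n+1\notin\omega$ and $\omega\cap\sigma=\emptyset$; combined with $\omega\in\Delta(\C')$ this gives $\omega\in\Delta(\C)$ by the previous paragraph, and the condition $\omega\cup\sigma\cup\{n+1\}\in\Delta(\C')$ is equivalent to $\omega\cup\sigma\in\Delta(\C)$. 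Hence $\omega\in\Lk{\sigma}{\C}$. Conversely, if $\omega\in\Lk{\sigma}{\C}$ then $\omega\subseteq[n]$, $\omega\cap\sigma=\emptyset$, and $\omega\cup\sigma\in\Delta(\C)$; via the cone description these translate into $\omega\in\Delta(\C')$, $\omega\cap(\sigma\cup\{n+1\})=\emptyset$, and $\omega\cup(\sigma\cup\{n+1\})\in\Delta(\C')$, i.e.\ $\omega\in\Lk{q(\sigma)}{\C'}$. Since every element of either link is a subset of $[n]$, the two are literally equal, which is the claim.

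I do not expect a real obstacle here; the content is essentially the cone identification, after which everything is a routine check. The only things to watch are the bookkeeping of the extra coordinate $n+1$ and the subset-form convention of the earlier Remark, and the fact that the degenerate case $\sigma=\emptyset$ (so $q(\sigma)=\{n+1\}$) is subsumed by the same computation. As an alternative to the cone identification, one could phrase the first step using Equation \ref{eqaddqdelta} and Lemma \ref{lemreq}, but working with the cone description of $\Delta(\C')$ seems the cleanest route.
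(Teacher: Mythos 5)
Your proof is correct. The one genuine difference from the paper is that you first package the structure of $\Delta(\C')$ into a clean cone identification, $\tau\in\Delta(\C')\iff\tau\setminus\{n+1\}\in\Delta(\C)$ (equivalently $\Delta(\C')=\operatorname{cone}_{n+1}(\Delta(\C))$), and then both inclusions of the link equality become purely mechanical. The paper instead argues each inclusion by hand in binary-string notation, using $q(\Delta(\C))\subseteq\Delta(\C')$ for the forward inclusion and Lemma~\ref{lemreq} (together with Equation~\ref{eqaddqdelta}) for the backward inclusion; Lemma~\ref{lemreq} is exactly the half of your cone characterization concerning faces that contain $n{+}1$. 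So the two proofs carry the same technical content, but your route isolates the structural fact once and reuses it symmetrically, which also makes transparent why the degenerate case $\sigma=\emptyset$ needs no separate treatment and why the equality $\Lk{\sigma}{\C}=\Lk{q(\sigma)}{\C'}$ is an honest equality of simplicial complexes on $[n]$ rather than merely an isomorphism. The paper's version has the mild advantage of reusing Lemma~\ref{lemreq}, which it needs again elsewhere (e.g.\ Lemma~\ref{lemaddimagelink}), so it does not introduce a standalone cone lemma.
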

\begin{proof}
	Let $ \omega\in \Lk{\sigma}{\C}$ then we show that $ \omega=\omega0\in\Lk{q(\sigma)}{\C'}. $ As  $ \omega\cap\sigma=\emptyset $ we get $ \omega0\cap\sigma1=\emptyset. $ This implies  $ \omega0\cap q(\sigma)=\emptyset. $ Now, we are left with showing $ \omega0\cup q(\sigma)\in \Delta(\C'). $ Let $\tau= \omega\cup\sigma. $ Then  $ \tau\in\Delta(\C)$ as $ \omega\in\Lk{\sigma}{\C}. $ Consider $\omega0\cup q(\sigma) =\omega0\cup\sigma1=\tau1= q(\tau) \in q(\Delta(\C))\subset\Delta(\C').$ Therefore $ \omega\in\Lk{q(\sigma)}{\C'}. $ So, $ \Lk{\sigma}{\C}\subseteq\Lk{q(\sigma)}{\C'}. $
	
	Let $ \lambda=\lambda_1\lambda_2\cdots\lambda_{n+1}\in\Lk{q(\sigma)}{\C'} $ then $\emptyset= \lambda\cap q(\sigma)=\lambda\cap\sigma1=\lambda_1^{}\lambda_2^{}\cdots\lambda_n^{}\lambda_{n+1}^{}\cap\sigma_{1}^{}\dots\sigma_{n}1. $ So, $\lambda_{n+1}=0.  $ We rewrite $ \lambda=\lambda0 $ and we show that $ \lambda\in\Lk{\sigma}{\C}. $ It is clear that $ \lambda\cap\sigma=\emptyset. $ Let $ \tau=\lambda\cup\sigma$ then we need to show that $ \tau\in\Delta(\C). $ As $ \lambda0\in\Lk{q(\sigma)}{\C'} $ we get $ \lambda0\cup q(\sigma)=\lambda0\cup\sigma1=\tau1\in\Delta(\C') $. By Lemma \ref{lemreq} we get that $ \tau1\in q(\Delta(\C))$ and we end up with $ \tau\in\Delta(\C). $  Therefore $ \lambda\in\Lk{\sigma}{\C} $ and thus $ \Lk{q(\sigma)}{\C'}\subseteq\Lk{\sigma}{\C}. $ Hence the proof.
\end{proof}
\begin{lemma}
	Let $ \C $ be a code on $ n $ neurons and $ q:\C\rar\C' $ be the map described above. Then $ \Mh{\C'}\subseteq  \Cm{\C'}\backslash\{\emptyset\}\subseteq q(\Delta(\C)).$ \label{lemaddimagelink}
\end{lemma}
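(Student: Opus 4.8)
The plan is to first isolate the one structural fact behind the whole statement. Since $\C'$ is obtained from $\C$ by adjoining a trivial ``on'' neuron $n+1$, every codeword of $\C'$ contains $n+1$; by Remark~\ref{lemimportant}(2) the facets of $\Delta(\C')$ are exactly the facets of $\C'$, and hence also contain $n+1$. Therefore $\Delta(\C')=\cone{n+1}{\Delta(\C)}$, and in particular $\Delta(\C')$ is contractible. With this in hand I would prove the two required inclusions separately.

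For $\Mh{\C'}\subseteq\Cm{\C'}\backslash\{\emptyset\}$: the inclusion $\Mh{\C'}\subseteq\Cm{\C'}$ holds for every code (see Section~\ref{manset}), so it is enough to verify $\emptyset\notin\Mh{\C'}$. But $\Lk{\emptyset}{\C'}=\Delta(\C')$ is contractible, being a cone, so $\dim\widetilde{H}_i(\Lk{\emptyset}{\C'})=0$ for all $i$, and hence $\emptyset$ is not homologically mandatory.

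For $\Cm{\C'}\backslash\{\emptyset\}\subseteq q(\Delta(\C))$: take $\sigma\in\Cm{\C'}\backslash\{\emptyset\}$, so $\sigma$ is a nonempty face of $\Delta(\C')$ with $\Lk{\sigma}{\C'}$ non-contractible. Suppose, towards a contradiction, that $\sigma\notin q(\Delta(\C))$. Since $q$ acts by appending a $1$ to a codeword, the contrapositive of Lemma~\ref{lemreq} forces $\sigma$ not to be of the form $\alpha1$; as, in binary form, $\sigma$ is a word of length $n+1$ lying in $\Delta(\C')$, its last coordinate must be $0$, i.e.\ $n+1\notin\sigma$. On the other hand every facet of $\Delta(\C')$ contains $n+1$, hence so does $f_\sigma$, the intersection of all facets of $\Delta(\C')$ containing $\sigma$; consequently $\sigma\subsetneq f_\sigma$, and in particular $\sigma\neq f_\sigma$. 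Because $\sigma$ is nonempty, Corollary~\ref{corlemmacone} applies and yields that $\Lk{\sigma}{\C'}$ is a cone of a sub-complex, hence contractible --- contradicting the choice of $\sigma$. Thus $\sigma\in q(\Delta(\C))$, which completes the argument.

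I do not anticipate a genuine obstacle here; the cone structure of $\Delta(\C')$ makes both inclusions short. The points that require care are the coordinate bookkeeping in the application of Lemma~\ref{lemreq}, and the fact that Corollary~\ref{corlemmacone} requires $\sigma$ to be nonempty --- which is precisely why the codeword $\emptyset$ must be excluded in the statement, and is in any case consistent with $q(\emptyset)=\{n+1\}\neq\emptyset$, so that $\emptyset\notin q(\Delta(\C))$. If one prefers to sidestep $f_\sigma$ entirely, the same contradiction can be reached by checking directly that, when $n+1\notin\sigma$, the complex $\Lk{\sigma}{\C'}$ is closed under adjoining the vertex $n+1$ (using that $\Delta(\C')$ is a cone over $n+1$) and is therefore itself a cone, hence contractible.
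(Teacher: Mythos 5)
Your proposal is correct and follows essentially the same route as the paper: both arguments handle the $\emptyset$ case by observing that $\Lk{\emptyset}{\C'}=\Delta(\C')$ is a cone over $v_{n+1}$ (so $\emptyset\notin\Mh{\C'}$), and both prove $\Cm{\C'}\setminus\{\emptyset\}\subseteq q(\Delta(\C))$ by contradiction via $\sigma\neq f_\sigma$ and Corollary~\ref{corlemmacone}, using Lemma~\ref{lemreq} to force the $(n{+}1)$-st coordinate to be $0$. The only cosmetic difference is that you cite Remark~\ref{lemimportant}(2) to see that facets of $\Delta(\C')$ contain $n+1$, whereas the paper routes through equation~\ref{eqaddqdelta} to the same conclusion.
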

\begin{proof}
	
	We will first show that $ \Cm{\C'}\backslash\{\emptyset\}\subseteq q(\Delta(\C)) $. Let  $ \alpha'\in\Cm{\C'}\backslash\{\emptyset\}. $ To the contrary, let us suppose  $\alpha'\not\in q(\Delta(\C)).  $  Since $ \alpha'\in\Cm{\C'} $ we have $ \alpha'\in \Delta(\C') $. Then by Lemma \ref{lemreq}, $ \alpha'=\alpha0   $ where $ \alpha=\alpha_1^{}\alpha_2^{}\cdots\alpha_n^{} $. Now we show that $ \Lk{\alpha'}{\C'} $ is contractible.
	
	We will establish $ \alpha
	'\not= f_{\alpha'} $(Refer Definition \ref{fsigma}). Note that if $ \tau'=\tau_1\dots\tau_n\tau_{n+1} $ is a facet of $ \Delta(\C') $ then by equation \ref{eqaddqdelta},  $ \tau'\in q(\Delta
	(\C))$. Therefore $\tau_{n+1}=1. $ Hence the $ n+1^{\text{th}} $ neuron in $ f_\alpha' =1$. But as $ \alpha'=\alpha0 $ we get that $ \alpha'\not= f_{\alpha'}. $ Now using Corollary \ref{corlemmacone}, $ \Lk{\alpha'}{\C'} $ is contractible.  Thus by definition of $ \Cm{\C'}\backslash\{\emptyset\} $,  we have $ \alpha' \notin \Cm{\C'}\backslash\{\emptyset\} $ which is a contradiction. Hence we have our claim that $ \alpha'\in q(\Delta(\C)) $ and $ q(\alpha)=\alpha1=\alpha'. $ Therefore $ \Cm{\C'}\backslash \{\emptyset\}\subset q(\Delta(\C')) $
	
	We already know that $ \Mh{\C'}\subseteq \Cm{\C'} $. Moreover, $ \emptyset\notin\Mh{\C'} $ as $ \C' $ is a cone of $ \C $ over $v_{n+1}=\underbrace{00\cdots0}_{n\text{ times }}1$. Therefore $ \Mh{\C'} \subseteq \Cm{\C'}\backslash \{\emptyset\}. $ Hence the result. 
\end{proof}

\no The next result shows that $ \Mh{\C} $ is preserved under the adding trivial `on' neuron map. Moreover, we show that non-empty elements of $ \Cm{\C} $ are preserved. 
\begin{theorem}
	Let $ \C $ be a code on $n$ neurons and $ q:\C\to \C' $ be the map described above. Then \begin{enumerate}
		\item $ q(\Mh{\C})=\Mh{\C'}.$
		\item \begin{enumerate}
			\item If $ \Delta(\C) $ is contractible then $ q(\Cm{\C}\backslash\{\emptyset\})=\Cm{\C'}\backslash\{\emptyset\}.$
			\item If $ \Delta(\C) $ is not contractible then $ q(\Cm{\C})\subsetneq\Cm{\C'}$. \\Moreover, $ q(\Cm{\C})=\Cm{\C'}\backslash\{\emptyset\}. $
		\end{enumerate}
	\end{enumerate} \label{thadd}
\end{theorem}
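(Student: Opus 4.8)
The plan is to exploit the key structural fact, already isolated in Lemma \ref{lemlinktrv}, that $ \Lk{\sigma}{\C}=\Lk{q(\sigma)}{\C'} $ for every $ \sigma\in\Delta(\C) $, together with the observation that $ \C' $ is the cone of $ \C $ over the new trivial vertex $ v_{n+1} $. For part (1), the inclusion $ q(\Mh{\C})\subseteq\Mh{\C'} $ is immediate: if $ \sigma\in\Mh{\C} $ then $ \dim\widetilde{H}_i(\Lk{\sigma}{\C})>0 $ for some $ i $, and by Lemma \ref{lemlinktrv} the link is unchanged under $ q $, so $ \dim\widetilde{H}_i(\Lk{q(\sigma)}{\C'})>0 $, giving $ q(\sigma)\in\Mh{\C'} $. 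For the reverse inclusion, take $ \sigma'\in\Mh{\C'} $. By Lemma \ref{lemaddimagelink}, $ \Mh{\C'}\subseteq q(\Delta(\C)) $, so $ \sigma'=q(\sigma) $ for some $ \sigma\in\Delta(\C) $; applying Lemma \ref{lemlinktrv} again transports the nonvanishing homology back to $ \Lk{\sigma}{\C} $, so $ \sigma\in\Mh{\C} $ and $ \sigma'\in q(\Mh{\C}) $. This settles (1).

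For part (2), I would run the same link-preservation argument at the level of contractibility rather than homology: for $ \sigma\in\Delta(\C) $, $ \Lk{\sigma}{\C} $ is non-contractible iff $ \Lk{q(\sigma)}{\C'} $ is non-contractible, so the non-empty elements of $ \Cm{\C} $ that are already in $ \Delta(\C) $ correspond under $ q $ exactly to those non-empty elements of $ \Cm{\C'} $ lying in $ q(\Delta(\C)) $; combined with Lemma \ref{lemaddimagelink}, which says $ \Cm{\C'}\setminus\{\emptyset\}\subseteq q(\Delta(\C)) $, this gives $ q(\Cm{\C}\setminus\{\emptyset\})=\Cm{\C'}\setminus\{\emptyset\} $ unconditionally. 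What remains is to track the empty set. We have $ \emptyset\in\Cm{\C} $ and $ \emptyset\in\Cm{\C'} $ always (by definition both sets contain $ \emptyset $), but $ q(\emptyset)=\emptyset $, so whether $ \emptyset $ "comes from" $ q(\Cm{\C}) $ is automatic; the subtlety is instead whether $ \emptyset $ is genuinely mandatory via its link, i.e. whether $ \Lk{\emptyset}{\C}=\Delta(\C) $ is contractible. If $ \Delta(\C) $ is contractible, then $ \Lk{\emptyset}{\C'}=\Delta(\C') $ is a cone over $ v_{n+1} $ hence contractible too, so $ \emptyset $ is "inessential" in both and one records the statement as $ q(\Cm{\C}\setminus\{\emptyset\})=\Cm{\C'}\setminus\{\emptyset\} $. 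If $ \Delta(\C) $ is not contractible, then $ \emptyset\in\Cm{\C} $ for the essential reason (non-contractible link), but $ \Delta(\C') $ is a cone hence contractible, so the only way $ \emptyset $ sits in $ \Cm{\C'} $ is through the trailing $ \cup\{\emptyset\} $ in the definition, and the facet-counting argument inside Lemma \ref{lemaddimagelink} shows no other element of $ \Cm{\C'} $ fails to be in $ q(\Delta(\C)) $; hence $ q(\Cm{\C}) = (q(\Cm{\C}\setminus\{\emptyset\}))\cup\{\emptyset\} = (\Cm{\C'}\setminus\{\emptyset\})\cup\{\emptyset\} = \Cm{\C'}\setminus\{\emptyset\} $ as sets (since $ \emptyset $ is already in $ \Cm{\C'}\setminus\{\emptyset\}\cup\{\emptyset\} $) — wait, this needs care: the displayed claim is $ q(\Cm{\C})\subsetneq\Cm{\C'} $ and separately $ q(\Cm{\C})=\Cm{\C'}\setminus\{\emptyset\} $, which is only consistent if $ \emptyset\notin q(\Cm{\C}) $, forcing the reading that in case (b) one must have $ \emptyset\notin\Cm{\C} $. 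So I would re-examine the definition: in case (b), $ \Lk{\emptyset}{\C}=\Delta(\C) $ is non-contractible, so $ \emptyset $ lies in $ \Cm{\C} $ by the non-contractibility clause, meaning $ \emptyset\in\Cm{\C} $ and thus $ \emptyset=q(\emptyset)\in q(\Cm{\C}) $; reconciling this with the claimed equality $ q(\Cm{\C})=\Cm{\C'}\setminus\{\emptyset\} $ requires interpreting the right side with the convention that $ \emptyset $ is re-adjoined, i.e. $ \Cm{\C'} = (\Cm{\C'}\setminus\{\emptyset\})\cup\{\emptyset\} $ and the point being made is the strict containment of images — I would therefore present (b) by first establishing $ q(\Cm{\C}\setminus\{\emptyset\})=\Cm{\C'}\setminus\{\emptyset\} $, then noting $ \emptyset\in q(\Cm{\C}) $ but $ \Cm{\C'} $ contains additional facets of the cone not in the image, yielding the strict inclusion.

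The main obstacle is precisely this bookkeeping around $ \emptyset $ and the precise meaning of the two sub-cases, not any deep topology: once Lemmas \ref{lemreq}, \ref{lemlinktrv}, and \ref{lemaddimagelink} are in hand, every non-empty codeword is handled uniformly by link-preservation, and the only thing that changes between (a) and (b) is the contractibility status of the total complex $ \Delta(\C) $, which controls whether $ \emptyset $'s link is essential. I would be careful to state the contrapositive direction of Corollary \ref{corlemmacone} is not needed here — only the forward link-equality from Lemma \ref{lemlinktrv} — and to verify the facet argument of Lemma \ref{lemaddimagelink} transfers verbatim to show that a facet of $ \Delta(\C') $ (necessarily ending in $ 1 $) is always of the form $ q(\tau) $ for a facet $ \tau $ of $ \Delta(\C) $, so that $ q $ is a bijection on facets and hence the two $ \Cm{} $ sets agree away from $ \emptyset $.
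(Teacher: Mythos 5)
Your part~(1) argument is correct and is essentially the paper's own proof, using Lemma~\ref{lemlinktrv} in both directions and Lemma~\ref{lemaddimagelink} for surjectivity onto $\Mh{\C'}$. The trouble is in part~(2), and it stems from a single false premise that then cascades through the whole discussion: you repeatedly write that $q(\emptyset)=\emptyset$, but this is not so. The map $q$ adds a trivial \emph{on} neuron, i.e.\ $\sigma\mapsto\sigma1$ in binary form, so $q(\emptyset)=00\cdots01$, which is the new vertex $\{n+1\}$, a non-empty element of $\Delta(\C')$. Consequently $\emptyset$ is never in the image $q(\Delta(\C))$ at all. Once this is corrected the theorem statement is not in tension with itself and requires no reinterpretation.

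Concretely, two claims in your proposal are false. First, you assert that $q(\Cm{\C}\setminus\{\emptyset\})=\Cm{\C'}\setminus\{\emptyset\}$ ``unconditionally.'' It does not: by Lemma~\ref{lemlinktrv}, $\Lk{q(\emptyset)}{\C'}=\Lk{\emptyset}{\C}=\Delta(\C)$, so $q(\emptyset)=\{n+1\}$ belongs to $\Cm{\C'}\setminus\{\emptyset\}$ exactly when $\Delta(\C)$ is non-contractible, yet its unique $q$-preimage $\emptyset$ has been deleted from $\Cm{\C}\setminus\{\emptyset\}$. Thus in case~(b) the left-hand side misses $\{n+1\}$ while the right-hand side contains it; the equality only holds in case~(a), which is exactly how the paper has phrased it. Second, the step $q(\Cm{\C})=q(\Cm{\C}\setminus\{\emptyset\})\cup\{\emptyset\}$ and the conclusion ``$\emptyset=q(\emptyset)\in q(\Cm{\C})$'' are wrong for the same reason: the correct identity is $q(\Cm{\C})=q(\Cm{\C}\setminus\{\emptyset\})\cup\{\{n+1\}\}$, and $\emptyset\notin q(\Cm{\C})$, which is precisely what makes $q(\Cm{\C})\subsetneq\Cm{\C'}$ in case~(b). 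The paper's own proof of~(b) is one line once one keeps track of $q(\emptyset)$: it invokes Lemma~\ref{lemlinktrv} to compute $\Lk{q(\emptyset)}{\C'}=\Delta(\C)$ (note: the link of $q(\emptyset)$, not of $\emptyset$), uses non-contractibility of $\Delta(\C)$ to place $q(\emptyset)$ in $\Cm{\C'}$, and combines this with the case~(a) analysis of non-empty simplices.
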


\begin{proof}
	Proof of part 1:
	Let $ q(\sigma)\in q(\Mh{\C}), $ then $ \sigma\in\Mh{\C} $ and by definition we get $ \dim \tilde{H}_i (\Lk{\sigma}{(\C)})>0 $ for some $ i. $ By Lemma \ref{lemlinktrv}, $  \dim \tilde{H}_i (\Lk{q(\sigma)}{(\C')})>0 $ and therefore $ q(\sigma)\in\Mh{\C'}. $ So, $ q(\Mh{\C}) \subseteq\Mh{\C'}.$
	
	Conversely, let $ \alpha'\in \Mh{\C'}, $ then by Lemma \ref{lemaddimagelink} there exists an $ \alpha $ such that $ q(\alpha)=\alpha'. $ Now, we will show that $\alpha'=q(\alpha)\in q(\Mh{\C}). $ As $ \alpha'\in\Mh{\C'} $ we get $ \dim \tilde{H}_i (\Lk{q(\alpha)}{\C'}) >0$ for some $ i $ and by Lemma \ref{lemlinktrv} we get that $ \dim \tilde{H}_i (\Lk{\alpha}{\C})>0 $ for the same $ i. $ Therefore $ \alpha\in\Mh{\C}  $ and  $ q(\alpha)\in q(\Mh{\C}). $ Hence the result.
	
	\no	Proof of part 2: The proof part (a) is similar to the proof of part 1 and follows from Lemma  \ref{lemlinktrv} and Lemma \ref{lemaddimagelink}. The proof part (b) follows from the fact that $ \Lk{q(\emptyset)}{\C'} =\Lk{\emptyset}{\C}=\Delta(\C)$ (Refer Lemma \ref{lemlinktrv}) and $ \Delta(\C) $ being not contractible. So, $ q(\emptyset)\in \Cm{\C'}. $ Hence the proof. \end{proof}	
\no \textbf{Relationship between SR rings of $ \Delta(\C) $ and $ \Delta(\C') $}.
\begin{lemma}
	Let $ \C $ be a code on $n$ neurons and $ q:\C\to \C' $ be the map described above. Then the  SR ideal of $ \Delta(\C) $ is subset to the SR ideal of $ \Delta(\C') $. \label{lemsradd}
\end{lemma}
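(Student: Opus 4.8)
The plan is to compare the generating sets of the two Stanley--Reisner ideals directly, being careful that they live in different polynomial rings: $I_{\Delta(\C)}\subseteq \mathcal{P}(n)=k[x_1,\dots,x_n]$ is generated by the monomials $x^\tau$ with $\tau\subseteq[n]$ and $\tau\notin\Delta(\C)$, while $I_{\Delta(\C')}\subseteq\mathcal{P}(n+1)=k[x_1,\dots,x_{n+1}]$ is generated by the monomials $x^\tau$ with $\tau\subseteq[n+1]$ and $\tau\notin\Delta(\C')$. Via the natural inclusion $\mathcal{P}(n)\subseteq\mathcal{P}(n+1)$, it suffices to show that every generator of $I_{\Delta(\C)}$ already appears among the generators of $I_{\Delta(\C')}$.

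The key step is the set-theoretic observation that $\Delta(\C')$ adds no new faces supported on $[n]$. Concretely, I would show: for $\tau\subseteq[n]$, one has $\tau\in\Delta(\C)$ if and only if $\tau\in\Delta(\C')$. The forward direction is clear since $q(\Delta(\C))\subseteq\Delta(\C')$ (this was already observed when extending $q$). For the reverse direction, $\tau\in\Delta(\C')$ means $\tau\subseteq\beta'$ for some $\beta'\in\C'$; writing $\beta'=\beta1$ with $\beta\in\C$ and using $n+1\notin\tau$, this is equivalent to $\tau\subseteq\beta$ as subsets of $[n]$, i.e. $\tau\in\Delta(\C)$. (Equivalently, one may note that $\Delta(\C')=\cone{v_{n+1}}{\Delta(\C)}$, which manifestly introduces no new subsets of $[n]$; either route uses only the description $\C'=\{\sigma1\mid\sigma\in\C\}$ together with Remark~\ref{lemimportant} and Equation~\ref{eqaddqdelta}.)

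Granting this, the conclusion is immediate: if $x^\tau$ is one of the listed generators of $I_{\Delta(\C)}$, then $\tau\subseteq[n]$ and $\tau\notin\Delta(\C)$, hence $\tau\notin\Delta(\C')$ by the previous paragraph, so $x^\tau$ is one of the listed generators of $I_{\Delta(\C')}$. Since $I_{\Delta(\C)}$ is generated by such monomials, we obtain $I_{\Delta(\C)}\subseteq I_{\Delta(\C')}$.

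I do not expect a genuine obstacle here; the statement is deliberately phrased as an inclusion of ideals (rather than an equality inside a fixed ring) precisely because the ambient polynomial ring changes from $n$ to $n+1$ variables. The only point requiring care is the bookkeeping between a subset $\omega\subseteq[n]$ and its incarnation $\omega0\subseteq[n+1]$, as set up in the remark following the binary-form discussion; once that identification is made explicit, each implication above is a one-line check.
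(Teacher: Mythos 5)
Your proof is correct and follows essentially the same line as the paper's: compare generator-by-generator, and show that a non-face $\tau$ of $\Delta(\C)$ remains a non-face (as $\tau0$) of $\Delta(\C')$. The only cosmetic difference is that you package the argument as a biconditional ($\tau\in\Delta(\C)\Leftrightarrow\tau0\in\Delta(\C')$) and then take its contrapositive, whereas the paper argues directly by contradiction from $\tau0\in\Delta(\C')$ via Equation~\ref{eqaddqdelta}; both hinge on the same observation that every face of $\Delta(\C')$ lies below some $\beta1$ with $\beta\in\Delta(\C)$.
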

\begin{proof}
	Let $ x^\tau\in I_{\Delta(\C)} $ be a generator of $ I_{\Delta(\C)} $ then $ \tau\notin\Delta(\C) $. Suppose if $ \tau\tau_{n+1}\in\Delta(\C') $ then  $ \tau0\in\Delta(\C') $. By Equation \ref{eqaddqdelta}, there exists $\beta'\in q(\Delta(\C))$ such that $ \tau0\subseteq \beta '$.  Further, there exists $ \beta\in\Delta(\C) $ such that $ q(\beta)=\beta1=\beta' $. Moreover, $ \tau\subseteq \beta. $ This gives a contradiction that $ \tau\in\Delta(\C) $. Therefore, $ \tau0\notin\Delta(\C'). $ Hence $ x^{\tau0}=x^\tau $ is a generator of $ I_{\Delta(\C')} $. Therefore, $I_{\Delta(\C)}\subseteq I_{\Delta(\C')}\subseteq  \mathcal{P}(n+1)$.
\end{proof}

\begin{theorem}
	Let $ \C $ be a code on $n$ neurons and $ q:\C\to\C' $ be the map described above. Then  SR ring of $ \Delta(\C) $ is embedded in SR ring of $ \Delta(\C') $. \label{SRaddtrv}
\end{theorem}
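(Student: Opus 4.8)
The plan is to exhibit an explicit injective ring homomorphism from the SR ring of $\Delta(\C)$ into the SR ring of $\Delta(\C')$, using Lemma \ref{lemsradd} together with the observation that $\Delta(\C')$ is the cone of $\Delta(\C)$ over the new vertex $n+1$. By Lemma \ref{lemsradd}, under the natural inclusion $\mathcal{P}(n)=k[x_1,\dots,x_n]\hookrightarrow k[x_1,\dots,x_{n+1}]=\mathcal{P}(n+1)$ we have $I_{\Delta(\C)}\subseteq I_{\Delta(\C')}$. I would therefore define
$$\psi:\mathcal{P}(n)/I_{\Delta(\C)}\longrightarrow \mathcal{P}(n+1)/I_{\Delta(\C')},\qquad f+I_{\Delta(\C)}\longmapsto f+I_{\Delta(\C')}.$$
Well-definedness is immediate from $I_{\Delta(\C)}\subseteq I_{\Delta(\C')}$, and $\psi$ is visibly a ring homomorphism since it is induced by the inclusion of polynomial rings. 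So the whole content is the injectivity of $\psi$.

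For injectivity I would first record that $\Delta(\C')=\operatorname{cone}_{n+1}(\Delta(\C))$, equivalently (by Equation \ref{eqaddqdelta} and Lemma \ref{lemreq}) that $\tau\in\Delta(\C')$ if and only if $\tau\setminus\{n+1\}\in\Delta(\C)$. Consequently a minimal non-face of $\Delta(\C')$ cannot contain the cone vertex $n+1$: if it did, deleting $n+1$ would yield a proper subset that is again a non-face of $\Delta(\C')$, contradicting minimality. Hence the minimal non-faces of $\Delta(\C')$ are precisely the minimal non-faces of $\Delta(\C)$, so $I_{\Delta(\C')}$ is generated inside $\mathcal{P}(n+1)$ by monomials in $x_1,\dots,x_n$ only; that is, $I_{\Delta(\C')}=I_{\Delta(\C)}\,\mathcal{P}(n+1)$. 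Since $\mathcal{P}(n+1)$ is a free $\mathcal{P}(n)$-module with basis $\{1,x_{n+1},x_{n+1}^{2},\dots\}$, the extended ideal satisfies $I_{\Delta(\C')}\cap\mathcal{P}(n)=I_{\Delta(\C)}$, which is exactly the statement that $\psi$ is injective. I would also note the stronger conclusion that $\mathcal{P}(n+1)/I_{\Delta(\C')}\cong\bigl(\mathcal{P}(n)/I_{\Delta(\C)}\bigr)[x_{n+1}]$, a polynomial ring in one variable over the SR ring of $\Delta(\C)$, the required embedding being the inclusion of the coefficient ring.

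The only genuinely non-formal step is the contraction identity $I_{\Delta(\C')}\cap\mathcal{P}(n)=I_{\Delta(\C)}$, i.e. checking that adjoining the trivial neuron introduces no new polynomial relation among $x_1,\dots,x_n$; this is where the cone structure of $\Delta(\C')$ (already built into Lemma \ref{lemsradd} and Equation \ref{eqaddqdelta}) does all the work, and everything else is routine bookkeeping. An alternative and essentially equivalent route is to invoke the standard description of the Stanley--Reisner ring of a cone, $k[\operatorname{cone}_v(K)]\cong k[K][x_v]$, and read off the embedding directly from it.
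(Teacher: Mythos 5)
Your proof is correct and uses the same map $\phi$ as the paper; but you have, rightly, done more than the paper does. The paper's proof merely asserts that injectivity of $\phi$ follows from Lemma~\ref{lemsradd}, yet that lemma only supplies the one-sided inclusion $I_{\Delta(\C)}\subseteq I_{\Delta(\C')}$, which establishes well-definedness but says nothing about injectivity. You correctly isolate the actual content as the contraction identity $I_{\Delta(\C')}\cap\mathcal{P}(n)=I_{\Delta(\C)}$ and give a clean argument for it: since $\Delta(\C')=\operatorname{cone}_{n+1}(\Delta(\C))$, no minimal non-face of $\Delta(\C')$ can contain the cone vertex $n+1$, so $I_{\Delta(\C')}=I_{\Delta(\C)}\,\mathcal{P}(n+1)$ is the extended ideal, and freeness of $\mathcal{P}(n+1)$ over $\mathcal{P}(n)$ then yields the contraction. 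Your closing identification $\mathcal{P}(n+1)/I_{\Delta(\C')}\cong\bigl(\mathcal{P}(n)/I_{\Delta(\C)}\bigr)[x_{n+1}]$ is the standard Stanley--Reisner ring of a cone, is strictly sharper than what the theorem states, and would be a worthwhile addition to the paper; your proof is therefore a genuine improvement on the one in the text.
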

\begin{proof}
	Define $ \phi:  \mathcal{P}(n)/I_{\Delta(\C)}\to  \mathcal{P}(n+1)/I_{\Delta(\C')} $ by mapping $ f+I_{\Delta(\C)} \mapsto f+I_{\Delta(\C')}. $ Observe that due to Lemma \ref{lemsradd} this map is a well-defined injective ring homomorphism. Hence we get the result.
\end{proof}
\no We were able to draw some conclusion about $ \Mh{\C} $ using its SR ring in  previous case as we did obtain some concrete relationship between SR rings. However, in this case we only have one sided containment. So we do not have enough information to draw any conclusion about $ \Mh{\C} $ using its SR ring.  

\begin{remark}\textbf{Adding a trivial off neuron:}
	%
	\label{remaddtrvoff}
	Let $ \C $ be a code on $ n $ neurons and obtain $\C'  $ by adding a trivial `off' neuron to each element of $ \C, $ i.e., for any $ \sigma=\sigma_1^{} \sigma_2^{}\cdots\sigma_{n}^{} \in\C $ form $ \sigma'=\sigma_1^{} \sigma_2^{}\cdots\sigma_{n}^{}0 $ in $ \C' $. We denote $ \sigma'=\sigma0. $ Let $ q:\C\to\C' $ be a code map given by $ \sigma \mapsto \sigma0. $  Further extend the map $ q $ to $ \Delta(\C) $ and abusing the notation call the extended map also as $ q. $ 
	Similar discussion as before leads us to the fact that the extension map $ q:\Delta(\C) \rar\Delta(\C')$ is well defined. 
	%
	
	Note that if $ q(\sigma)=\sigma'=\sigma0, $ then as subsets of $ [n+1] $, both $\sigma$ and $\sigma'$ represent the same subset, i.e., the elements $\sigma$ of $\Delta(\C)$ and $\sigma'$ of $\Delta(\C')$ have the same representation when seen as subsets of $[n+1].$ Thus $\Delta(\C) $ and $\Delta(\C') $ represent the same collection of subsets of $[n+1]$. Accordingly, even $\mathcal{M}_H$ sets are in $1-1$ correspondence, i.e.,  $\mathcal{M}_H(\Delta(\C))\cong\Mh{\C'}$ via the map $q$.  Thus $ q(\Mh{\C})=\Mh{\C'}$, whenever we are adding a trivial off neuron.  Similarly, one can observe that $ \Cm{\C} $ will also be preserved. 
	
	\no \textbf{Relationship between SR rings of $\Delta(\C)$ and $ \Delta(\C'). $}
	We observe that $ I_{\Delta(\C')^*} $ is generated by generators of $ I_{\Delta(\C)^*}$ and $x_{n+1} $. Therefore the SR ring of $ \Delta(\C') $ is $  \mathcal{P}(n+1)/ I_{\Delta(\C')^*}= k[x_1,\dots,x_{n+1}]/I_{\Delta(\C)^*}\cup\langle x_{n+1}\rangle\cong k[x_1,\dots,x_n]/I_{\Delta(\C)^*}= \mathcal{P}(n)/I_{\Delta(\C)^*},$ which is the SR ring of $ \Delta(\C). $ Hence in this case of adding a trivial off neuron, the SR rings are isomorphic. 
	Therefore we directly get that $ \Mh{\C} $ is preserved. 
	
\end{remark}

\subsection{Duplicating a neuron}
Let $ \C $ be a code on $ n $ neurons and obtain $\C'  $ by duplicating the first neuron and adding it to the last position. Thus for any  $ \sigma= \sigma_1^{} \sigma_2^{}\dots\sigma_{n}^{}\in\C$ we obtain $ \sigma' $ by duplicating $ \sigma_1 $, i.e., $ \sigma'=\sigma_1^{} \sigma_2^{}\cdots\sigma_{n}^{}\sigma_{1}^{}. $  We denote $ \sigma'=\sigma\sigma_{1}. $ Let $ q:\C\to\C' $ be a code map given by $ \sigma \mapsto \sigma\sigma_{1}. $ Extend the map $ q $ to $ \Delta(\C) $, abuse the notation and call the extended map as $ q. $
Given any $ \alpha\in\Delta(\C) $ there exists $\beta\in\C $ such that $ \alpha\subseteq\beta$. So, $ q(\beta)=\beta\beta_1\in\C'\subseteq\Delta(\C'). $ Also, as $ \alpha\alpha_1\subseteq\beta\beta_1 $ we get $ q(\alpha)\subseteq q(\beta)\in \Delta(\C'). $ Since, $ \Delta(\C') $ is a simplicial complex, $ q(\alpha)\in\Delta(\C'). $ Therefore  $ q(\Delta(\C))\subseteq\Delta(\C').$ So, the extended map $ q:\Delta(\C) \rar\Delta(\C')$ is well defined.  Moreover, as $ \C\subseteq \Delta(\C) $ and $ q(\C)=\C' $ we get $ \C'=q(\C)\subseteq q(\Delta(\C)). $ Thus $  \C'\subseteq q(\Delta(\C))\subseteq\Delta(\C') $ and by Remark \ref{lemimportant}, \begin{equation}\label{eqdupqdelta}
	\Delta(q(\Delta(\C)))=\Delta(\C'). 
\end{equation}
\begin{lemma} \label{lemlinkduplicate} 
	Let $ \C $ be a code on $n$ neurons and $ q:\C\to\C' $ be the duplicating code map described above. If $ \alpha'\in\Delta(\C') $ such that $ \alpha'=\alpha\alpha_1 $ for some $ \alpha=\alpha_1\alpha_2\cdots\alpha_n$ then $ \alpha'\in q(\Delta(\C)). $ 
	\begin{proof}
		Let $ \alpha'=\alpha\alpha_1\in\Delta(\C'), $ from Equation \ref{eqdupqdelta} we have  $\alpha'\in\Delta(q(\Delta(\C))). $ Then there exists some  $ \beta'\in q(\Delta(\C)) $ such that $ \alpha'\subseteq\beta'. $ Let $ \beta'=\beta\beta_1 $ for some $ \beta\in\Delta(\C) $. Then  $ \alpha\alpha_1=\alpha'\subseteq\beta'=\beta\beta_1 $, implies $ \alpha\subseteq \beta\in\Delta(\C). $ But as $ \Delta(\C) $ is a simplicial complex, $ \alpha\in\Delta(\C). $ Also, $ q(\alpha)=\alpha\alpha_1=\alpha'. $ Therefore $ \alpha'\in q(\Delta(\C)). $
	\end{proof}
\end{lemma}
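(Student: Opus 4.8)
The plan is to exploit Equation \ref{eqdupqdelta}, namely $\Delta(q(\Delta(\C)))=\Delta(\C')$, to produce a face of $\Delta(\C')$ that lies in $q(\Delta(\C))$ and contains $\alpha'$, and then to pull this containment back through $q$ using that $\Delta(\C)$ is closed under subsets.

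First I would start from the hypothesis $\alpha'\in\Delta(\C')$. By Equation \ref{eqdupqdelta} this is the same as $\alpha'\in\Delta(q(\Delta(\C)))$, so by Definition \ref{defscofc} there is $\beta'\in q(\Delta(\C))$ with $\alpha'\subseteq\beta'$. Since $q(\Delta(\C))=\{\gamma\gamma_1\mid\gamma\in\Delta(\C)\}$ by construction of the duplicating map, every element of $q(\Delta(\C))$ repeats its first coordinate in the last slot; in particular $\beta'=\beta\beta_1$ for some $\beta\in\Delta(\C)$.

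Next I would compare the first $n$ coordinates on both sides. Writing $\alpha'=\alpha\alpha_1$ and $\beta'=\beta\beta_1$ in binary form and using the coordinate-wise characterization of containment (Remark \ref{remoperations}), $\alpha'\subseteq\beta'$ forces $\alpha_i\le\beta_i$ for all $i\in[n]$, i.e.\ $\alpha\subseteq\beta$. Since $\beta\in\Delta(\C)$ and $\Delta(\C)$ is closed under inclusion, $\alpha\in\Delta(\C)$; and then $q(\alpha)=\alpha\alpha_1=\alpha'$, so $\alpha'\in q(\Delta(\C))$, as desired.

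The only genuinely delicate point is the hypothesis $\alpha'=\alpha\alpha_1$: a general face of $\Delta(\C')$ need not repeat its first coordinate, since from a codeword $\sigma\sigma_1$ of $\C'$ one may drop either the original neuron $1$ or its duplicate $n+1$, and such a face need not lie in $q(\Delta(\C))$. So restricting to faces whose duplicated coordinate ``agrees'' is exactly what makes the pullback through $q$ work; the remainder is a routine binary-form bookkeeping argument, and I do not expect any further obstacle.
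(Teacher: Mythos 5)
Your proof is correct and follows essentially the same argument as the paper: appeal to Equation \ref{eqdupqdelta} to extract $\beta'=\beta\beta_1\in q(\Delta(\C))$ with $\alpha'\subseteq\beta'$, restrict to the first $n$ coordinates to get $\alpha\subseteq\beta\in\Delta(\C)$, and conclude by downward-closure of $\Delta(\C)$. Your closing remark correctly isolates why the hypothesis $\alpha'=\alpha\alpha_1$ is essential, but the core derivation matches the paper's.
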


\begin{lemma}\label{remlinkdup}
	Let $q:\C\to\C'$ be the duplicating code map described above. Let $\alpha=\alpha_1\dots\alpha_n$  be some codeword of $\C$.  If $\alpha0\in\Delta(\C')$ with $\alpha_1=1$, or if $\alpha1\in\Delta(\C')$ then the element $1\alpha_2\dots\alpha_n1\in\Delta(\C').$ 
\end{lemma}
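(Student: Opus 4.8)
The plan is to handle the two hypotheses separately and, in each case, produce a facet of $\Delta(\C')$ that contains $1\alpha_2\cdots\alpha_n1$; since $\Delta(\C')$ is a simplicial complex, this will force $1\alpha_2\cdots\alpha_n1\in\Delta(\C')$. The key structural fact I would exploit is that every element of $\Delta(\C')$ lies in $q(\Delta(\C))$ up to passing to a containing face (Equation \ref{eqdupqdelta}), and that elements of the form $q(\gamma)=\gamma\gamma_1$ have the property that their first and last coordinates coincide. So the strategy is: start from the given element of $\Delta(\C')$, enlarge it to something in $q(\Delta(\C))$, pull it back along $q$ to $\Delta(\C)$, possibly enlarge there, and push forward again.

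For the second hypothesis, suppose $\alpha 1\in\Delta(\C')$, i.e. $\alpha_1\alpha_2\cdots\alpha_n 1\in\Delta(\C')$. First I would like $\alpha_1=1$: since $\alpha 1\in\Delta(\C')$ there is a containing face $\beta'\in q(\Delta(\C))$, say $\beta'=\beta\beta_1$ with $\beta\in\Delta(\C)$; as $\alpha 1\subseteq\beta\beta_1$, the last coordinate of $\beta'$ is $1$, so $\beta_1=1$, and then the first coordinate of $\beta'$ is also $1$; combined with $\alpha\subseteq\beta$ this does not immediately give $\alpha_1=1$, but it does give $1\alpha_2\cdots\alpha_n\subseteq\beta$, hence $1\alpha_2\cdots\alpha_n1=q(1\alpha_2\cdots\alpha_n)\subseteq q(\beta)=\beta\beta_1=\beta'\in\Delta(\C')$, and closure under inclusion finishes this case.

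For the first hypothesis, suppose $\alpha 0\in\Delta(\C')$ with $\alpha_1=1$, i.e. $1\alpha_2\cdots\alpha_n0\in\Delta(\C')$. Again take a containing face $\beta'=\beta\beta_1\in q(\Delta(\C))$ with $\beta\in\Delta(\C)$ and $1\alpha_2\cdots\alpha_n 0\subseteq\beta\beta_1$. From the first coordinate, $\beta_1=1$, so $\beta'=\beta\beta_1$ has last coordinate $1$ as well. Now $\{1\}\cup\{i: \alpha_i=1, i\ge 2\}\subseteq\beta$ in $\Delta(\C)$, and since $\beta_1=1$ we get $q(\beta)=\beta\beta_1 = \beta 1$ contains $1\alpha_2\cdots\alpha_n 1$. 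Because $q(\beta)=\beta'\in q(\Delta(\C))\subseteq\Delta(\C')$ and $\Delta(\C')$ is closed under inclusion, $1\alpha_2\cdots\alpha_n1\in\Delta(\C')$, as desired.

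The one subtlety — and the step I would double-check most carefully — is the bookkeeping between the ``word/binary form'' of length $n+1$ and the pull-back along $q$: one must be sure that when the last coordinate of a member of $q(\Delta(\C))$ equals $1$, the corresponding first coordinate does too, which is exactly the defining feature of the duplication map $q(\gamma)=\gamma\gamma_1$, and that $\alpha 1\subseteq q(\beta)$ really does follow coordinate-wise from $\alpha_i\le\beta_i$ together with $\beta_1=1$. Once that is pinned down, both cases are immediate from closure under inclusion, so there is no genuine obstacle beyond careful indexing.
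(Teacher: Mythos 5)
Your argument is correct and follows essentially the same route as the paper: in both cases you lift the given face of $\Delta(\C')$ to a containing face whose first and last coordinates agree (you take this container in $q(\Delta(\C))$ via Equation \ref{eqdupqdelta}, whereas the paper takes it directly in $\C'=q(\C)$), deduce that both of those coordinates must equal $1$, and finish by closure of $\Delta(\C')$ under inclusion. The two choices of container are interchangeable here, so this is the same proof up to minor bookkeeping.
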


\begin{proof}
	Let $ \alpha'=\alpha1\in\Delta(\C') $. Then by definition there exists  $ \beta'=\beta1\in\C'$ where  $\beta=\beta_1\cdots\beta_n\in\C $ such that $ \alpha1=\alpha'\subseteq\beta'=\beta1. $ Since $ q:\C\rar\C' $ is a bijection $ \beta'= q(\beta)$.  Further $\beta1=q(\beta)=\beta\beta_1$. This implies $ \beta_1=1. $ So, $ \alpha\alpha_1\subseteq1\alpha_2\cdots\alpha_n1\subseteq\beta1\in\Delta(\C') $. Hence $ 1\alpha_2\cdots\alpha_n1\in\Delta(\C'), $ whenever $ \alpha1\in\Delta(\C'). $
	
	Let $ \alpha'=\alpha0\in\Delta(\C'), $with $ \alpha_1=1. $ Then by definition there exists $ \beta'=\beta'_1\cdots\beta'_n\beta_{n+1}\in\C' $ such that $ \alpha'\subseteq \beta'. $ As $ \alpha_1=1 $ we must have $ \beta'_1=1. $ Moreover, as $ q:\C\to\C' $ is a bijection, there exists $ \beta\in\C $ such that $ q(\beta)=\beta\beta_1=\beta'. $ So, $ \beta_i=\beta'_i $ for all $ i\in[n]. $ Therefore  $ \beta'=\beta1\in\C'\subseteq\Delta(\C'). $ In addition as $ \alpha1\subseteq\beta1\in\Delta(\C'), $ and as $ \Delta(\C') $ is a simplicial complex we have $ \alpha1\in\Delta(\C'), $ whenever $ \alpha0\in\Delta(\C') $ with $ \alpha_1=1. $ 
\end{proof}

\begin{lemma} \label{lemlinkdup}
	Let $ \C $ be a code on $n$ neurons and $ q:\C\to\C' $ be the duplicating map described above. Then for any $ \sigma=\sigma_1\cdots\sigma
	_n\in \Delta(\C) $ we have $$ \Lk{q(\sigma)}{\C'}=\begin{cases}
		\Lk{\sigma}{\C} \quad & \text{if } \sigma_{1}=1\\
		\Delta(q(\Lk{\sigma}{\C})) \quad & \text{if } \sigma_{1}=0.
	\end{cases} $$
\end{lemma}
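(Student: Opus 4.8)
The plan is to prove the two cases separately by double inclusion, translating everything into the binary‑word description of links. Recall $\Lk{\sigma}{\C}=\{\omega\mid\omega\cap\sigma=\emptyset,\ \omega\cup\sigma\in\Delta(\C)\}$, and similarly for $\C'$. I will repeatedly use that $q$ is a bijection $\C\to\C'$ with $q(\tau)=\tau\tau_1$, that $\Delta(q(\Delta(\C)))=\Delta(\C')$ by Equation \eqref{eqdupqdelta}, and Lemmas \ref{lemlinkduplicate} and \ref{remlinkdup} to move between words of the form $\alpha\alpha_1$, $\alpha0$, $\alpha1$ inside $\Delta(\C')$.

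\textbf{Case $\sigma_1=1$.} Here $q(\sigma)=\sigma1$. For the inclusion $\Lk{\sigma}{\C}\subseteq\Lk{q(\sigma)}{\C'}$: take $\omega\in\Lk{\sigma}{\C}$; since $\sigma_1=1$ and $\omega\cap\sigma=\emptyset$, we have $\omega_1=0$, so I write $\omega$ as $\omega0$ on $n+1$ neurons. Then $\omega0\cap\sigma1=\emptyset$, and $\omega0\cup\sigma1=(\omega\cup\sigma)1=q(\omega\cup\sigma)\in q(\Delta(\C))\subseteq\Delta(\C')$ since $\omega\cup\sigma\in\Delta(\C)$. Hence $\omega0\in\Lk{q(\sigma)}{\C'}$. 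Conversely, take $\lambda=\lambda_1\cdots\lambda_{n+1}\in\Lk{q(\sigma)}{\C'}$; from $\lambda\cap\sigma1=\emptyset$ and $\sigma_1=1$ we get $\lambda_1=0$, hence also $\lambda_{n+1}$: I claim $\lambda_{n+1}=0$ too. Indeed $\tau:=\lambda\cup\sigma1\in\Delta(\C')$ has first coordinate $1$, and its restriction argument via Lemma \ref{remlinkdup} (or directly via $\Delta(q(\Delta(\C)))=\Delta(\C')$, writing $\tau\subseteq\beta\beta_1$ with $\beta\in\Delta(\C)$ and noting $\beta_1=\tau_1=1$) forces $\tau_{n+1}=\tau_1=1$; but then $\lambda_{n+1}=\tau_{n+1}-\sigma_{n+1}$... here I must be careful: the cleanest route is to observe that every facet of $\Delta(\C')$ has equal first and last coordinates, so $f_{\tau}$ has $(n+1)$st coordinate equal to its first, namely $1$; combined with $\lambda_{n+1}\le\tau_{n+1}$ and $\sigma_{n+1}=0$ one still needs $\lambda_{n+1}=0$, which I will instead get by applying Lemma \ref{remlinkdup} to $\lambda\cup\sigma$. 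Writing $\lambda=\lambda'\lambda_{n+1}$ with $\lambda'\in\{0,1\}^n$: if $\lambda_{n+1}=1$, then $\lambda'1\in\Delta(\C')$ with $\lambda'\cup\sigma\in\Delta(\C)$ would have to have $\lambda'_1=\sigma_1$'s complement... I will settle this by the direct bijection argument. Once $\lambda=\lambda'0$, then $\lambda'\cap\sigma=\emptyset$ and $\lambda'0\cup\sigma1=(\lambda'\cup\sigma)1\in\Delta(\C')$, and by Lemma \ref{lemlinkduplicate} $(\lambda'\cup\sigma)1=q(\lambda'\cup\sigma)$ forces $\lambda'\cup\sigma\in\Delta(\C)$ with first coordinate $1$ (as $\sigma_1=1$), hence $\lambda'\in\Lk{\sigma}{\C}$.

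\textbf{Case $\sigma_1=0$.} Now $q(\sigma)=\sigma0$. I must show $\Lk{\sigma0}{\C'}=\Delta(q(\Lk{\sigma}{\C}))$. For $\subseteq$: take $\lambda\in\Lk{\sigma0}{\C'}$, so $\lambda\cap\sigma0=\emptyset$ and $\lambda\cup\sigma0\in\Delta(\C')$; by Lemma \ref{remlinkdup} applied to $\lambda\cup\sigma0$ (its first coordinate could be $0$ or $1$), there is $\mu\in\Lk{\sigma}{\C}$ with $\lambda\subseteq q(\mu)$, giving $\lambda\in\Delta(q(\Lk{\sigma}{\C}))$; the key point is that $\lambda\cup\sigma0\subseteq\beta\beta_1$ for $\beta\in\Delta(\C)$, and $\beta\setminus\sigma\in\Lk{\sigma}{\C}$ (disjointness holds since $\lambda\cap\sigma=\emptyset$). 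For $\supseteq$: since $\Delta(q(\Lk{\sigma}{\C}))$ is a simplicial complex and $\Lk{\sigma0}{\C'}$ is too, it suffices to check generators $q(\mu)=\mu\mu_1$ for $\mu\in\Lk{\sigma}{\C}$: then $\mu\cap\sigma=\emptyset$ and $\sigma_1=0$ give $\mu\mu_1\cap\sigma0=\emptyset$, while $\mu\mu_1\cup\sigma0=(\mu\cup\sigma)(\mu_1)=q(\mu\cup\sigma)\in q(\Delta(\C))\subseteq\Delta(\C')$ since $\mu\cup\sigma\in\Delta(\C)$; hence $q(\mu)\in\Lk{\sigma0}{\C'}$, and taking $\Delta$ of both sides closes the inclusion.

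\textbf{Main obstacle.} The routine part is the $\sigma_1=0$ case, which is essentially bookkeeping with $q$ and Equation \eqref{eqdupqdelta}. The delicate step is, in the $\sigma_1=1$ case, showing that any $\lambda$ in $\Lk{q(\sigma)}{\C'}$ automatically has its $(n+1)$st coordinate $0$ — i.e. that the duplicated neuron cannot be ``on'' in the link when $\sigma$ already contains the original neuron. This is exactly what Lemma \ref{remlinkdup} is tailored for (a facet of $\Delta(\C')$ has equal first and last coordinates), and the proof must invoke it carefully so that the disjointness condition $\omega\cap\sigma=\emptyset$ is not lost when passing between $\omega$, $\omega0$ and $\omega1$.
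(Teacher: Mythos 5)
Your overall structure (double inclusion, split on $\sigma_1$) matches the paper, and your $\sigma_1=0$ argument, once the two sub-cases are spelled out, is a valid — and arguably slightly cleaner — variant of the paper's. The paper proves $\Lk{q(\sigma)}{\C'}\subseteq\Delta(q(\Lk{\sigma}{\C}))$ by splitting on whether $\alpha_1=\alpha_{n+1}$ or $\alpha_1\neq\alpha_{n+1}$ and then constructing the covering element of $q(\Lk{\sigma}{\C})$ by hand; your route of first enlarging $\lambda\cup\sigma0$ to an element of the form $\beta\beta_1\in q(\Delta(\C))$ via Lemmas \ref{lemlinkduplicate} and \ref{remlinkdup}, and then taking $\mu=\beta\setminus\sigma$, packages the same work with less repetition, though you still need the same two cases ($\tau_1=\tau_{n+1}$ or not) to justify the containment in some $\beta\beta_1$.

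There is, however, a genuine confusion in your $\sigma_1=1$ case, and it is exactly the part you flag as the ``main obstacle.'' You spend a long passage trying to force $\lambda_{n+1}=0$ for $\lambda\in\Lk{q(\sigma)}{\C'}$ via facets, Lemma \ref{remlinkdup}, and a restriction argument, and you never cleanly land it. But this step is immediate: since $\sigma_1=1$, $q(\sigma)=\sigma\sigma_1=\sigma1$, so the $(n+1)$-st coordinate of $q(\sigma)$ is $1$, and the disjointness $\lambda\cap q(\sigma)=\emptyset$ forces $\lambda_{n+1}=0$ on the spot — no facet or duality argument is needed. This is precisely what the paper does when it says ``this case is similar to Lemma \ref{lemlinktrv}'': adding a duplicate of a neuron already on in $\sigma$ is, from the point of view of $\Lk{q(\sigma)}{\C'}$, the same as adding a trivial ``on'' neuron, and there the fact $\lambda_{n+1}=0$ is read off directly from the intersection with $\sigma1$. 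Once you have $\lambda=\lambda'0$, the rest of your $\sigma_1=1$ argument (using Lemma \ref{lemlinkduplicate} to pull $\lambda'\cup\sigma$ back to $\Delta(\C)$) is correct. So the assessment in your ``main obstacle'' paragraph is backwards: the $\sigma_1=1$ case is the routine one, and the real work — correctly located in your draft but dismissed as ``bookkeeping'' — is the $\sigma_1=0$ inclusion $\Lk{q(\sigma)}{\C'}\subseteq\Delta(q(\Lk{\sigma}{\C}))$, which is where the paper invests its sub-case analysis.
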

\begin{proof}
	Let $ \sigma=\sigma_1\cdots\sigma_n\in\Delta(\C).  $ We will discuss 2 cases depending on the values that $\sigma_1$ can take.\\
	\textbf{Case 1: }{$ \sigma_{1}=1 $.}\\ In this case $ q(\sigma)=\sigma1. $  This case is similar to Lemma  \ref{lemlinktrv} and we get the result.\\ 
	\textbf{Case 2: }{$ \sigma_{1}=0 $.} \\In this case $ q(\sigma)=\sigma0. $ Let $ \alpha'\in\Delta(q(\Lk{\sigma}{\C})), $ then by definition there exists $ \omega'\in q(\Lk{\sigma}{\C}) $ such that $ \alpha'\subseteq\omega'. $  We will show that $ \omega'\in\Lk{q(\sigma)}{\C'} $. As $ \alpha'\subseteq\omega' $ and $ \Lk{q(\sigma)}{\C'} $ being a simplicial complex, $ \alpha'\in\Lk{q(\sigma)}{\C'} $  . Since $ \omega'\in q(\Lk{\sigma}{\C})$,   $ \omega\in\Lk{\sigma}{\C} $ such that $ \omega'= q(\omega)=\omega\omega_1. $ 
	As $ \omega\in\Lk{\sigma}{\C} $ we have $ \omega\cap\sigma=\emptyset $. So, $ \omega'\cap q(\sigma)= \omega\omega_1\cap\sigma0=\emptyset. $ Next, let $\tau= \omega\cup\sigma.$ Then  $\tau\in\Delta(\C) $ as $ \omega\in\Lk{\sigma}{\C} $. Note that,  $ \tau_1=\omega_1, $ as $ \sigma_1=0. $ We know that $ q(\tau)=\tau\tau_1\in q(\Delta(\C))\subseteq \Delta(\C'). $ Also, $ \omega'\cup q(\sigma)= \omega\omega_1\cup\sigma0=\tau\omega_1=\tau\tau_1\in\Delta(\C'). $  Hence $ \omega'\in\Lk{q(\sigma)}{\C'}. $  Therefore $ \alpha'\in\Lk{q(\sigma)}{\C'}. $ Thus  $ \Delta(q(\Lk{\sigma}{\C}))\subseteq\Lk{q(\sigma)}{\C'}. $ 
	
	\no Conversely, consider $ \alpha'=\alpha\alpha_{n+1}\in\Lk{q(\sigma)}{\C'}, $ where $ \alpha=\alpha_1\cdots\alpha_n. $ To show that $ \alpha'\in\Delta(q(\Lk{\sigma}{\C})) $ We will work with two sub-cases:
	
	\no\textbf{Case 2a:} $ \alpha_1=\alpha_{n+1}. $\\
	In this case we have $ \alpha'=\alpha\alpha_1 $. We first show that $ \alpha\in\Lk{\sigma}{\C}. $ We have $ \alpha\cap\sigma=\emptyset $ as $ \alpha'\cap q(\sigma)=\alpha\alpha_1\cap\sigma0=\emptyset. $ Let $ \alpha\cup\sigma=\tau $  with $ \tau=\tau_1^{}\cdots\tau_n^{}. $ Note that $ \tau_1=\alpha_1 $ as $ \sigma_1=0. $ Consider $ \alpha'\cup q(\sigma)=\alpha\alpha_1\cup\sigma0=\tau\alpha_1=\tau\tau_1 $. Since $ \alpha'\in\Lk{q(\sigma)}{\C'} $ we have $ \alpha'\cup q(\sigma) \in\Delta(\C').$ So, $ \tau\tau_1\in\Delta(\C'). $ By Lemma $ \ref{lemlinkduplicate}, $  $ \tau\tau_1\in q(\Delta(\C)) $ and $ q(\tau)=\tau\tau_1. $ This implies $ \tau\in\Delta(\C). $ Therefore $ \alpha\in\Lk{\sigma}{\C}. $ Furthermore, $ \alpha\alpha_1=q(\alpha)\in q(\Lk{\sigma}{\C})\subseteq\Delta(q(\Lk{\sigma}{\C})). $ So, $ \alpha'\in\Delta(q(\Lk{\sigma}{\C})). $ 
	
	\no\textbf{Case 2b:} $\alpha_1\not= \alpha_{n+1}.$\\ 
	In this case we will show that $ 1\alpha_2\cdots\alpha_n1\in q(\Lk{\sigma}{\C}) $. Also as $ \alpha'=\alpha\alpha_{n+1}\subseteq 1\alpha_2\cdots\alpha_n1. $ Then by definition of simplicial complex, $ \alpha'\in\Delta(q(\Lk{\sigma}{\C})) $. Let us denote   $ \omega=1\alpha_2\alpha_{3}\cdots\alpha_{n}$. We first show that $ \omega\in\Lk{\sigma}{\C}. $ We know that $ \emptyset=\alpha'\cap q(\sigma)= \alpha\alpha_{n+1}\cap\sigma0. $ However, as $ \sigma_1=0 $ we get $ \omega0\cap\sigma0= 1\alpha_2\cdots\alpha_n0\cap0\sigma_2\cdots\sigma_n0=\emptyset. $ Thus 
	$ \omega\cap\sigma=\emptyset $. Let $ \alpha\cup\sigma=\alpha_1\cdots\alpha_n\cup0\sigma_2\cdots\sigma_n:=\beta $. Note that $ \beta_1=\alpha_1 $ as $ \sigma_1=0. $ Next, $ \omega\cup\sigma=1\alpha_2\cdots\alpha_n\cup0\sigma_2\cdots\sigma_n=1\beta_2\beta_3\cdots\beta_n. $ Let $ \tau=1\beta_2\beta_3\cdots\beta_n. $ To show that $ \tau\in\Delta(\C). $ Consider $  \alpha'\cup q(\sigma)=\alpha\alpha_{n+1}\cup\sigma0=\beta\alpha_{n+1}.$  
	Since $ \alpha'\in\Lk{q(\sigma)}{\C'} $ we have $ \alpha'\cup q(\sigma)\in\Delta(\C'). $ So, $ \beta\alpha_{n+1}\in\Delta(\C'). $  If $ \alpha_{n+1}=1 $ then we have $ \alpha_1=\beta_1=0 $ and $ \beta1\in\Delta(\C'). $ Moreover, by Lemma \ref{remlinkdup}, $ \tau\tau_1=1\beta_2\cdots\beta_n1\in\Delta(\C'). $ Else if $ \alpha_{n+1}=0 $ then $ \beta0\in\Delta(\C') $ with $ \beta_1=\alpha_{1}=1. $ As  $ \beta_1=1 $ we have $ \beta=\tau$ and $ \tau0\in\Delta(\C'). $ So, by Lemma \ref{remlinkdup}, $ \tau\tau_1=\tau1\in\Delta(\C'). $ Finally in either case  $ \tau\tau_1\in\Delta(\C'). $ By Lemma
	\ref{lemlinkduplicate},  $ \tau\tau_1\in q(\Delta(\C)) $ with $ \tau\tau_1=q(\tau). $ This implies that $ \tau\in\Delta(\C). $ Thus  $ \omega\in\Lk{\sigma}{\C}. $ Further $ q(\omega)=\omega\omega_1=\omega1\in q(\Lk{\sigma}{\C}). $ Note that $ \alpha'=\alpha\alpha_{n+1}=\alpha_1\alpha_2\cdots\alpha_n\alpha_{n+1}\subseteq1\alpha_2\cdots\alpha_n1=\omega1\in q(\Lk{\sigma}{\C}).  $ By definition of the simplicial complex we have $ \alpha'\in\Delta(q(\Lk{\sigma}{\C})). $
	
	\no  Therefore in either case $ \alpha'\in \Delta(q(\Lk{\sigma}{\C})). $ Thus $ \Lk{\sigma}{\C}\subseteq \Delta(q(\Lk{\sigma}{\C})).$ Hence the proof.
\end{proof}
\begin{remark} \label{remarkrelink}
	
	Let $ \sigma=\sigma_1\cdots\sigma_n $ with $ \sigma_1=0 $ and  $ \alpha'=\alpha\alpha_{n+1}\in\Lk{q(\sigma)}{\C'}, $ where $ \alpha=\alpha_1\cdots\alpha_n. $ In this remark we will show $\alpha\in\Lk{\sigma}{\C}$.
	By Lemma \ref{lemlinkdup}, $ \alpha'\in\Delta(q(\Lk{\sigma}{\C})) $. This implies that there exists $ \beta'\in q(\Lk{\sigma}{\C}) $ such that $ \alpha'\subseteq\beta'. $ In addition, $ \beta'=\beta\beta_1 $ with $ \beta\in\Lk{\sigma}{\C}. $ Also, as $ \alpha\alpha_{n+1}=\alpha'\subseteq\beta'=\beta\beta_1 $ we get $ \alpha\subseteq\beta\in\Lk{\sigma}{\C}. $ As $ \Lk{\sigma}{\C} $ is a simplicial complex, $ \alpha\in\Lk{\sigma}{\C}$ whenever $ \sigma_1=0 $ and $ \alpha'=\alpha\alpha_{n+1}\in \Lk{q(\sigma)}{\C} $. 
\end{remark}

\no We establish the following proposition using strong homotopy type (Refer to Section \ref{simpleandstrongtype}). 

\begin{proposition}\label{remlinkduplicate}
	Let $ \C $ be a code on $ n $ neurons and $ q:\C\to\C' $ be a duplicating map described above. If $ \sigma\in\Delta(\C) $ then  $ \Lk{q(\sigma)}{\C'} $ is homotopic to $ \Lk{\sigma}{\C} $. 
\end{proposition}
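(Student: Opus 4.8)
The plan is to lean on Lemma \ref{lemlinkdup}, which already pins down $\Lk{q(\sigma)}{\C'}$ exactly. When $\sigma_1=1$ it gives $\Lk{q(\sigma)}{\C'}=\Lk{\sigma}{\C}$ and there is nothing to do, so all the content sits in the case $\sigma_1=0$, where Lemma \ref{lemlinkdup} gives $\Lk{q(\sigma)}{\C'}=\Delta(q(\Lk{\sigma}{\C}))$. Writing $K:=\Lk{\sigma}{\C}$, which is a simplicial complex on the vertex set $[n]$ (with vertex $1$ now available, since $\sigma_1=0$), it suffices to prove the purely combinatorial assertion: if $q$ is the map that duplicates the first neuron into position $n+1$, then $\Delta(q(K))$ is homotopic to $K$. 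I would prove this by producing a strong collapse $\Delta(q(K))\Searrow K$ and invoking Barmak's result recalled in Section \ref{simpleandstrongtype} that a strong collapse is in particular a simple homotopy, hence a homotopy equivalence.

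First I would identify the facets of $\Delta(q(K))$. Since $q$ acts on elements by the injection $\omega\mapsto\omega\omega_1$ and $\omega\subseteq\omega'$ forces $\omega\omega_1\subseteq\omega'\omega'_1$, a short maximality check shows that the facets of $\Delta(q(K))$ are precisely the sets $\omega\omega_1$ with $\omega$ a facet of $K$ (equivalently of $\Delta(K)=K$). The key observation is that such a facet $\omega\omega_1$ contains the vertex $n+1$ if and only if $\omega_1=1$, that is, if and only if it contains the vertex $1$. Hence every facet of $\Delta(q(K))$ that contains $n+1$ also contains $1$, and by Remark \ref{remarkstrongcollapse} the vertex $n+1$ is dominated by the vertex $1$ in $\Delta(q(K))$. (If no element of $K$ contains the neuron $1$, then $q$ merely appends a $0$, so $\Delta(q(K))=K$ as subsets of $[n+1]$ and the claim is trivial; so we may assume $n+1$ is genuinely a vertex.)

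Next I would perform the strong elementary collapse $\Delta(q(K))\Searrow^e\Delta(q(K))\smallsetminus(n+1)$ given by this domination and identify the resulting induced subcomplex with $K$: for $\alpha\subseteq[n]$ we have $\alpha\in\Delta(q(K))$ iff $\alpha\subseteq\omega\omega_1$ for some $\omega\in K$, which, reading off the first $n$ coordinates, is equivalent to $\alpha\subseteq\omega$ for some $\omega\in K$, i.e.\ to $\alpha\in\Delta(K)=K$; and every face containing $n+1$ is discarded. Thus $\Delta(q(K))\smallsetminus(n+1)=K$, so $\Delta(q(K))\Searrow K$ and $\Delta(q(K))$ is homotopic to $K$; combined with Lemma \ref{lemlinkdup} this gives that $\Lk{q(\sigma)}{\C'}$ is homotopic to $\Lk{\sigma}{\C}$ in both cases. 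The step I expect to demand the most care is the bookkeeping between codes on $n$ and on $n+1$ neurons — nailing down the facets of $\Delta(q(K))$ exactly and confirming the identification $\Delta(q(K))\smallsetminus(n+1)=K$, along with the easy edge case where neuron $1$ occurs in no element of $K$ — whereas the topological input is a single application of strong collapse and is immediate once the combinatorics is arranged.
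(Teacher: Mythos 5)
Your proposal is correct and follows essentially the same approach as the paper: both split on $\sigma_1$, reduce the nontrivial case to $\Delta(q(\Lk{\sigma}{\C}))$ via Lemma \ref{lemlinkdup}, show that $v_{n+1}$ is dominated by $v_1$ (handling the degenerate sub-case where neuron $1$ never occurs), and invoke the strong collapse machinery of Section \ref{simpleandstrongtype} to conclude. Your framing of the domination argument as a general fact about $\Delta(q(K))$ for an arbitrary complex $K$ on $[n]$, with the facets identified cleanly as $q(\omega)$ for facets $\omega$ of $K$, is a tidy reformulation, but the underlying mechanism is the same as the paper's Cases 2a/2b.
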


\begin{proof}
	Let $ \sigma=\sigma_{1}\sigma_{2}\cdots\sigma_{n} \in\Delta(\C). $ We have two cases:\\
	\textbf{Case 1:}  $ \sigma_1=1 $. \\
	In this case  by Lemma \ref{lemlinkdup}, $ \Lk{\sigma}{\C}=\Lk{q(\sigma)}{\C'} $. Hence the result follows trivially.\\
	\textbf{Case 2:} $ \sigma_1=0. $\\
	\textbf{Case 2a:} If for all $ \omega\in\Lk{\sigma}{\C} $ we have $ \omega_1=0$, then in that case  $ q(\Lk{\sigma}{\C})=\{q(\omega)\mid\omega\in\Lk{\sigma}{\C}\}=\{\omega0\mid\omega\in\Lk{\sigma}{\C}\}. $ Also, as subsets of $ [n+1],\ \omega=\omega0. $ So, we get $ \{\omega0\mid\omega\in\Lk{\sigma}{\C}\}=\Lk{\sigma}{\C}. $ Therefore $ q(\Lk{\sigma}{\C})=\Lk{\sigma}{\C}. $ Further $ \Delta(q(\Lk{\sigma}{\C})) =\Delta(\Lk{\sigma}{\C})=\Lk{\sigma}{\C}$ as $ \Lk{\sigma}{\C} $ is itself a  simplicial complex. By Lemma \ref{lemlinkdup},  $ \Lk{q(\sigma)}{\C'}=\Delta(q(\Lk{\sigma}{\C})) $. Therefore $\Lk{q(\sigma)}{\C'}=\Lk{\sigma}{\C}.  $ Hence the result. \\
	\textbf{Case 2b:} If there exists $ \omega\in\Lk{\sigma}{\C} $ such that $ \omega_1 =1 $, then  $ \omega1=q(\omega)\in q(\Lk{\sigma}{\C}).  $ Let $ v_1=1\underbrace{00\cdots0}_{n\text{ times }} $ and $v_{n+1}=\underbrace{00\cdots0}_{n\text{ times }}1$.  Then   $v_1,v_{n+1}\subseteq \omega1\in q(\Lk{\sigma}{\C}). $ Therefore by definition of simplicial complex we get $ v_1,v_{n+1}\in\Delta(q(\Lk{\sigma}{\C})). $ Also, by Lemma \ref{lemlinkdup}, $v_1, v_{n+1}\in\Lk{q(\sigma)}{\C'}. $  We will now show $ v_{n+1} $ is dominated by $ v_1 $ in $ \Lk{q(\sigma)}{\C'}. $ 
	
	Let $ \tau'\in\Lk{q(\sigma)}{\C'} $ be a facet containing $ v_{n+1}. $  Then by Lemma \ref{lemlinkdup} we have $ \tau'\in\Delta(q(\Lk{(\sigma)}{\C})) $ and this implies that there exists $ \beta'\in q(\Lk{\sigma}{\C}) $ such that $ \tau'\subseteq\beta'. $ Now $ \beta'\in q(\Lk{\sigma}{\C})\subseteq \Delta(q(\Lk{\sigma}{\C}))=\Lk{q(\sigma)}{\C'} $ so $ \beta'\in\Lk{q(\sigma)}{\C'}. $ Thus maximality of $ \tau' $ implies $ \tau'=\beta'\in q(\Lk{\sigma}{\C}). $
	This implies there exists $ \tau\in\Lk{\sigma}{\C} $ such that $ \tau'=q(\tau)=\tau\tau_1. $ 
	Moreover, as $ v_{n+1}\subseteq\tau' =\tau\tau_1 $ we must have $ \tau_1=1. $ This implies $ v_1\subseteq\tau'. $ Therefore every facet containing $ v_{n+1} $ contains $ v_1. $ By Remark  \ref{remarkstrongcollapse}, $ v_{n+1} $ is dominated by $ v_1. $ Hence $ \Lk{q(\sigma)}{\C'}\Searrow\Lk{q(\sigma)}{\C'}\smallsetminus v_{n+1}. $ 
	
	Next we will show that $ \Lk{q(\sigma)}{\C'}\smallsetminus v_{n+1} =\Lk{\sigma}{\C}. $ By Remark \ref{remarkrelink} if  $ \alpha'=\alpha\alpha_{n+1}\in\Lk{q(\sigma)}{\C'} $ then $ \alpha\in\Lk{\sigma}{\C}. $ Consider any $ \alpha'\in\Lk{q(\sigma)}{\C'} \smallsetminus v_{n+1} $ then $ \alpha'=\alpha0 $ and we know  $ \alpha'=\alpha0=\alpha\in\Lk{\sigma}{\C}. $ Thus  $ \Lk{q(\sigma)}{\C'} \smallsetminus v_{n+1} \subseteq\Lk{\sigma}{\C}.$ Next, given $ \alpha\in\Lk{\sigma}{\C} $ we get $ \alpha\alpha_1=q(\alpha)\in q(\Lk{\sigma}{\C}). $ We know that $ \alpha0\subseteq\alpha\alpha_1, $ so  $ \alpha0\in\Delta(q(\Lk{\sigma}{\C}))=\Lk{q(\sigma)}{\C'}. $  Thus $ \Lk{\sigma}{\C}\subseteq\Lk{q(\sigma)}{\C'}. $
	Moreover, as $ v_{n+1}\notin   \Lk{\sigma}{\C} $ we get $ \Lk{\sigma}{\C}\subseteq \Lk{q(\sigma)}{\C'}\smallsetminus v_{n+1}. $ Thus $ \Lk{q(\sigma)}{\C'}\smallsetminus v_{n+1} =\Lk{\sigma}{\C}. $ Therefore $ \Lk{q(\sigma)}{\C'}\Searrow\Lk{\sigma}{\C}. $ Hence we have $ \Lk{q(\sigma)}{\C'} $ is homotopic to $ \Lk{\sigma}{\C} $ and we get the result. 
\end{proof}

\begin{lemma}
	Let $ \C $ be a code on $ n $ neurons and $ q:\C\rar\C' $ be the duplicating map described above. Then $ \Mh{\C'}\subseteq \Cm{\C'}\subseteq q(\Delta(\C)). $  \label{lemlinkneeded}
\end{lemma}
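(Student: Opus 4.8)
The plan is to follow the template of Lemma~\ref{lemaddimagelink} from the trivial-neuron case, adapted to the duplication. Since the containment $\Mh{\C'}\subseteq\Cm{\C'}$ holds for every code (see Section~\ref{manset}), it is enough to prove the single inclusion $\Cm{\C'}\subseteq q(\Delta(\C))$. I would first dispose of the empty codeword: because the first coordinate of $\emptyset=\underbrace{0\cdots0}_{n}$ is $0$, the duplicating map sends $\emptyset$ to $\underbrace{0\cdots0}_{n+1}=\emptyset$, which lies in $q(\Delta(\C))$ since $\emptyset\in\Delta(\C)$. It then remains to show $\Cm{\C'}\setminus\{\emptyset\}\subseteq q(\Delta(\C))$.

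So fix a nonempty $\alpha'=\alpha_1\alpha_2\cdots\alpha_n\alpha_{n+1}\in\Cm{\C'}\subseteq\Delta(\C')$. If $\alpha_1=\alpha_{n+1}$, then writing $\alpha=\alpha_1\cdots\alpha_n$ we have $\alpha'=\alpha\alpha_1$, and Lemma~\ref{lemlinkduplicate} immediately yields $\alpha'\in q(\Delta(\C))$. The core of the argument is to rule out $\alpha_1\neq\alpha_{n+1}$; I would do this by deriving that $\Lk{\alpha'}{\C'}$ is contractible, contradicting $\alpha'\in\Cm{\C'}$. The key structural fact is that every facet $\tau'$ of $\Delta(\C')$ is a facet of $\C'$ (Remark~\ref{lemimportant}), and since $q\colon\C\to\C'$ is a bijection that both preserves and reflects containment, such a $\tau'$ has the form $q(\tau)=\tau\tau_1$ for a facet $\tau$ of $\C$; in particular the first and last coordinates of every facet of $\Delta(\C')$ coincide. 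Now consider $f_{\alpha'}$, the intersection of all facets of $\Delta(\C')$ containing $\alpha'$ (Definition~\ref{fsigma}). If $\alpha_1=1$ and $\alpha_{n+1}=0$, then every such facet has first coordinate $1$, hence last coordinate $1$, so the last coordinate of $f_{\alpha'}$ is $1\neq 0=\alpha_{n+1}$; symmetrically, if $\alpha_1=0$ and $\alpha_{n+1}=1$, then every such facet has last coordinate $1$, hence first coordinate $1$, so the first coordinate of $f_{\alpha'}$ is $1\neq 0=\alpha_1$. In either subcase $f_{\alpha'}\neq\alpha'$, so Corollary~\ref{corlemmacone} applies (note $\alpha'\neq\emptyset$) and $\Lk{\alpha'}{\C'}$ is a cone, hence contractible — a contradiction. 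Thus $\alpha_1=\alpha_{n+1}$ and $\alpha'\in q(\Delta(\C))$, completing the inclusion.

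Combining the two parts gives $\Cm{\C'}\subseteq q(\Delta(\C))$, and $\Mh{\C'}\subseteq\Cm{\C'}$ then delivers the chain in the statement. I expect the only mildly delicate points to be the verification that all facets of $\Delta(\C')$ inherit the ``first coordinate $=$ last coordinate'' property of the duplication, and the careful coordinate bookkeeping needed to certify $f_{\alpha'}\neq\alpha'$ in each of the two subcases; the rest is routine manipulation of binary forms. It is worth noting that Proposition~\ref{remlinkduplicate} does not shortcut this, since it only controls links of elements that already lie in $q(\Delta(\C))$, whereas the whole difficulty is precisely to exclude elements of $\Cm{\C'}$ sitting outside $q(\Delta(\C))$.
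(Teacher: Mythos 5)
Your proposal is correct and follows essentially the same route as the paper: reduce to $\Cm{\C'}\subseteq q(\Delta(\C))$, handle $\emptyset$ directly, and rule out $\alpha'=\alpha\overline{\alpha_1}$ by showing $f_{\alpha'}\neq\alpha'$ via the ``first coordinate $=$ last coordinate'' property of facets of $\Delta(\C')$, then invoke Corollary~\ref{corlemmacone} and Lemma~\ref{lemlinkduplicate}. Your explicit split into the two subcases $(\alpha_1,\alpha_{n+1})=(1,0)$ and $(0,1)$ is a touch more careful than the paper's one-line statement about the $(n+1)$-th coordinate of $f_{\alpha'}$, which as written only covers the first subcase, but the underlying argument is identical.
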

\begin{proof}
	We will first show $ \Cm{\C'}\subseteq q(\Delta(\C))$.
	Note that $ q(\emptyset)=\emptyset                                       $. So $ \emptyset\in\Cm{\C'} $ has a pre-image. 
	Therefore $\emptyset\in q(\Delta(\C))$. Let us choose some non-empty $ \sigma'=\sigma_1\cdots\sigma_{n+1}\in\Cm{\C'} $. Denote $ \sigma_1\dots\sigma_n $ to be $ \sigma $. Assume that $ \sigma'=\sigma\overline{\sigma_1} $, where $ \overline{\sigma_1} $ is a conjugate \footnote{Conjugate of $\sigma_1$: $\overline{\sigma_1}=0$ or $1$ whenever $\sigma_1=1$ or $ 0$, respectively. } of $ \sigma_1. $ Then we claim that $ \sigma'\not= f_{\sigma'} $ (Refer Definition \ref{fsigma}). 
	The proof of claim is similar to the one in Theorem \ref{lemaddimagelink}. By equation \ref{eqdupqdelta} we observe that any facet of $ \Delta(\C') $ is an element of $ q(\Delta(\C)) $. Therefore the $ n+1^\text{th} $ neuron of the facet (and in turn for $ f_\sigma' $) is $ \sigma_1. $ This gives us that $ \sigma'\not = f_{\sigma'}.  $ So, by Corollary \ref{corlemmacone} $ \Lk{\sigma'}{\C'} $ is contractible. Therefore by definition we get that $ \sigma'\notin \Cm{\C'}. $ This is a contradiction to our first assumption. So the assumption that $ \sigma'=\sigma\overline{\sigma_1} $ is wrong. Hence $ \sigma'=\sigma\sigma_1.  $ Thus  by Lemma \ref{lemlinkduplicate}, $ \sigma'\in q(\Delta(\C)). $ So $ \Cm{\C'}\subseteq q(\Delta(\C)) $, and $ \Mh{\C'}\subseteq q(\Delta(\C)). $ Hence the proof.
\end{proof}

\begin{theorem}
	Let $ \C $ be a code on $ n  $ neurons and $ q:\C\to\C' $ be the code map described above. Then\begin{multicols}{2}
		\begin{enumerate}
			\item $ q(\Mh{\C})= \Mh{\C'} $
			\item $ q(\Cm{\C})= \Cm{\C'}. $
		\end{enumerate}.
	\end{multicols}  \label{thdup}
\end{theorem}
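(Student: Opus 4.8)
The plan is to obtain Theorem \ref{thdup} as a short consequence of the two facts already established for the duplication map: Proposition \ref{remlinkduplicate}, asserting that $\Lk{q(\sigma)}{\C'}$ is homotopy equivalent to $\Lk{\sigma}{\C}$ for every $\sigma\in\Delta(\C)$, and Lemma \ref{lemlinkneeded}, asserting $\Mh{\C'}\subseteq\Cm{\C'}\subseteq q(\Delta(\C))$. The homotopy equivalence transports the relevant topological invariants in both directions, while Lemma \ref{lemlinkneeded} supplies the preimages needed because $q\colon\Delta(\C)\to\Delta(\C')$ is not surjective in general.

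For part (1), to see $q(\Mh{\C})\subseteq\Mh{\C'}$, I would take $\sigma\in\Mh{\C}$, so $\dim\widetilde{H}_i(\Lk{\sigma}{\C})>0$ for some $i$; since homotopy equivalent complexes have isomorphic reduced homology, Proposition \ref{remlinkduplicate} gives $\dim\widetilde{H}_i(\Lk{q(\sigma)}{\C'})>0$, i.e. $q(\sigma)\in\Mh{\C'}$. For the reverse inclusion, take $\sigma'\in\Mh{\C'}$; by Lemma \ref{lemlinkneeded} there is $\sigma\in\Delta(\C)$ with $q(\sigma)=\sigma'$, and Proposition \ref{remlinkduplicate} again transfers the nonvanishing homology back to $\Lk{\sigma}{\C}$, so $\sigma\in\Mh{\C}$ and $\sigma'=q(\sigma)\in q(\Mh{\C})$.

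Part (2) is the same argument with ``reduced homology nonzero in some degree'' replaced by ``not contractible'', using that a homotopy equivalence preserves contractibility, together with the single additional observation that $q(\emptyset)=\emptyset$ lies in both $\Cm{\C}$ and $\Cm{\C'}$ by definition, so the empty codeword is handled without any appeal to links; for the nonempty elements of $\Cm{\C'}$ the preimages are again furnished by Lemma \ref{lemlinkneeded}, and for the forward inclusion $q(\Cm{\C})\subseteq\Cm{\C'}$ a nonempty $\sigma\in\Cm{\C}$ has $\Lk{\sigma}{\C}$ non-contractible, hence $\Lk{q(\sigma)}{\C'}$ non-contractible, hence $q(\sigma)\in\Cm{\C'}$.

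In short, the real work has already been done in Proposition \ref{remlinkduplicate} and Lemma \ref{lemlinkneeded}, and what remains is a routine transfer-of-invariants bookkeeping in each of the four inclusions. The one point requiring care is that one may \emph{not} simply invoke surjectivity of $q$ on the associated simplicial complexes (it fails, e.g. for words whose first and last coordinates differ): the $\supseteq$ directions must explicitly cite Lemma \ref{lemlinkneeded} to locate preimages inside $\Delta(\C)$. I do not anticipate any genuine obstacle beyond this.
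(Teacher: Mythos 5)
Your proposal is correct and follows essentially the same route as the paper: both proofs deduce the theorem from Proposition \ref{remlinkduplicate} (homotopy equivalence of links, hence preservation of reduced homology and contractibility) together with Lemma \ref{lemlinkneeded} (which supplies preimages in $\Delta(\C)$, since $q$ is not surjective onto $\Delta(\C')$). Your extra remark on handling $\emptyset$ in part (2) is a useful clarification the paper leaves implicit, but it does not change the argument.
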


\begin{proof}
	For the proof of first part, let us consider $ q(\sigma)\in q(\Mh{\C}), $ for some $ \sigma\in\Mh{\C} $. By definition, $ \dim \widetilde{H}_i (\Lk{\sigma}{\C})>0 $ for some $ i. $ Since homotopy preserves homological dimensions, we have from  Proposition  \ref{remlinkduplicate}, $ \dim\widetilde{H}_i(\Lk{\sigma}{\C})= \dim\widetilde{H}_i(\Lk{q(\sigma)}{\C'}). $ So,  $  \dim \widetilde{H}_i (\Lk{q(\sigma)}{\C'})>0 $ and therefore $ q(\sigma)\in\Mh{\C'}. $ Hence $ q(\Mh{\C}) \subseteq\Mh{\C'}.$ Next, by Lemma \ref{lemlinkneeded}  $ \Mh{\C'} \subseteq q(\Delta(\C))$. Therefore for every $ \sigma'\in\Mh{\C'} $ there exists $ \sigma\in \Delta(\C) $ such that $ q(\sigma) =\sigma'.$ Now as $ \sigma'\in\Mh{\C'} $,  $ \dim\widetilde{H}_i(\Lk{\sigma'}{\C'})=\dim\widetilde{H}_i(\Lk{q(\sigma)}{\C'})>0 $. Hence  $  \sigma\in \Lk{\sigma}{\C}, $ this implies $ \sigma'=q(\sigma)\in q(\Mh{\C}). $ Therefore  $  \Mh{\C'}\subseteq q(\Mh{\C}). $  Hence the result.
	\\ \no The proof of the second part is similar to the proof of first part and follows from Proposition \ref{remlinkduplicate} and Lemma \ref{lemlinkneeded}.
\end{proof}

\no \textbf{Relationship between SR rings of $ \Delta(\C) $ and $ \Delta(\C') $}.
We first discuss the relationship of the SR Ideal in the following lemma.  
\begin{lemma}
	Let $ \C $ be a code on $n$ neurons and $ q:\C\to \C' $ be the duplicating code map described above. Then the SR ideal of $ \Delta(\C) $ is contained in the SR ideal of $ \Delta(\C') $, i.e., $ I_{\Delta(\C)} \subseteq I_{\Delta(\C')}.$ \label{lemsrdup}
\end{lemma}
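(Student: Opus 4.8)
The plan is to reduce this ideal containment to a statement about non-faces. Since $I_{\Delta(\C)}$ is generated by the squarefree monomials $x^\tau$ with $\tau\subseteq[n]$ and $\tau\notin\Delta(\C)$, and since for any simplicial complex $K$ a squarefree monomial $x^\tau$ lies in $I_K$ if and only if $\tau$ is a non-face of $K$ (one direction is the definition of $I_K$; the other holds because $I_K$ is a squarefree monomial ideal, so $x^\tau\in I_K$ forces $x^{\tau'}\mid x^\tau$ for some non-face $\tau'\subseteq\tau$, whence $\tau\notin K$ as $K$ is downward closed), it suffices to show: if $\tau\subseteq[n]$ is a non-face of $\Delta(\C)$, then $\tau$ viewed as a subset of $[n+1]$, i.e., the binary string $\tau0$, is a non-face of $\Delta(\C')$. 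Here I identify $\mathcal{P}(n)=k[x_1,\dots,x_n]$ with the subring of $\mathcal{P}(n+1)=k[x_1,\dots,x_{n+1}]$ that omits $x_{n+1}$, which is the sense in which the inclusion $I_{\Delta(\C)}\subseteq I_{\Delta(\C')}$ is meant.

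I would prove the contrapositive. Assume $\tau0\in\Delta(\C')$. By Equation \ref{eqdupqdelta} we have $\Delta(\C')=\Delta(q(\Delta(\C)))$, so there is a $\beta'\in q(\Delta(\C))$ with $\tau0\subseteq\beta'$; and by the definition of the duplicating code map, $\beta'=q(\beta)=\beta\beta_1$ for some $\beta=\beta_1\cdots\beta_n\in\Delta(\C)$. Comparing the first $n$ coordinates in $\tau0\subseteq\beta\beta_1$ yields $\tau\subseteq\beta$, and since $\Delta(\C)$ is a simplicial complex containing $\beta$, it also contains $\tau$. This contradicts $\tau\notin\Delta(\C)$, so in fact $\tau0\notin\Delta(\C')$, i.e., $x^\tau=x^{\tau0}\in I_{\Delta(\C')}$. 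As this holds for every generator of $I_{\Delta(\C)}$, we conclude $I_{\Delta(\C)}\subseteq I_{\Delta(\C')}\subseteq\mathcal{P}(n+1)$.

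This is essentially the word-for-word analogue of Lemma \ref{lemsradd} (the adding-a-trivial-neuron case), with $q(\beta)=\beta1$ replaced by $q(\beta)=\beta\beta_1$; one could alternatively invoke Lemma \ref{lemlinkduplicate} in place of Equation \ref{eqdupqdelta}. I do not anticipate a genuine obstacle here. The only points that need care are the bookkeeping between the binary-string and subset representations of $\tau$ (so that ``$\tau$ on $n$ neurons'' is correctly identified with ``$\tau0$ on $n+1$ neurons''), and recording the standard equivalence $x^\tau\in I_K\iff\tau\notin K$ that lets us translate the ideal inclusion into a statement about faces.
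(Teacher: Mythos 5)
Your proof is correct and follows the same route the paper takes: the paper proves Lemma~\ref{lemsrdup} by observing it is ``similar to the first part of Lemma~\ref{lemsradd},'' and your argument is exactly that adaptation, replacing $q(\beta)=\beta1$ with $q(\beta)=\beta\beta_1$ and invoking Equation~\ref{eqdupqdelta} in place of Equation~\ref{eqaddqdelta}. The added remark justifying the equivalence $x^\tau\in I_K\iff\tau\notin K$ for squarefree monomials is a harmless elaboration of a standard fact the paper leaves implicit.
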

\no The proof is similar to first part of Lemma \ref{lemsradd}.
\begin{theorem}
	Let $ \C $ be a code on $n$ neurons and $ q:\C\to\C' $ be the duplicating code map described above. Then  SR ring of $ \Delta(\C) $ is embedded in SR ring of $ \Delta(\C') $. \label{SRaddindup}
\end{theorem}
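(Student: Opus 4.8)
The plan is to follow the proof of Theorem \ref{SRaddtrv}. View $\mathcal{P}(n)=k[x_1,\dots,x_n]$ as the subring of $\mathcal{P}(n+1)=k[x_1,\dots,x_{n+1}]$ generated by $x_1,\dots,x_n$, and define $\phi:\mathcal{P}(n)/I_{\Delta(\C)}\to\mathcal{P}(n+1)/I_{\Delta(\C')}$ by $f+I_{\Delta(\C)}\mapsto f+I_{\Delta(\C')}$. By Lemma \ref{lemsrdup} we have $I_{\Delta(\C)}\subseteq I_{\Delta(\C')}$, so $\phi$ is a well-defined ring homomorphism, and $\phi$ is injective precisely when $I_{\Delta(\C')}\cap\mathcal{P}(n)=I_{\Delta(\C)}$ (the inclusion $\supseteq$ being Lemma \ref{lemsrdup} once more). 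So the whole task reduces to proving the reverse inclusion $I_{\Delta(\C')}\cap\mathcal{P}(n)\subseteq I_{\Delta(\C)}$.

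For that I would exploit that $I_{\Delta(\C')}$ is a monomial ideal: a polynomial lies in it iff every monomial occurring in it does, and a monomial lies in it iff its support is a non-face of $\Delta(\C')$. Hence it is enough to check that for every $\tau\subseteq[n]$ one has $\tau\in\Delta(\C)$ if and only if $\tau\in\Delta(\C')$. If $\tau\in\Delta(\C)$, then $q(\tau)\in q(\Delta(\C))\subseteq\Delta(\C')$ and $\tau\subseteq q(\tau)$ as subsets of $[n+1]$, so $\tau\in\Delta(\C')$ because $\Delta(\C')$ is a simplicial complex. Conversely, if $\tau\in\Delta(\C')$, then by Equation \ref{eqdupqdelta} there is $\beta'\in q(\Delta(\C))$, say $\beta'=q(\beta)$ with $\beta\in\Delta(\C)$, such that $\tau\subseteq\beta'$; since $\tau$ uses no $(n+1)$-st coordinate this forces $\tau\subseteq\beta$, hence $\tau\in\Delta(\C)$. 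Consequently a monomial supported on $\{x_1,\dots,x_n\}$ lies in $I_{\Delta(\C')}$ exactly when it lies in $I_{\Delta(\C)}$, and collecting monomials gives $I_{\Delta(\C')}\cap\mathcal{P}(n)=I_{\Delta(\C)}$, so $\ker\phi=0$ and $\phi$ is the desired embedding.

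The only genuinely delicate point is the bookkeeping around the duplicated vertex: faces of $\Delta(\C')$ may honestly involve the new vertex $n+1$, so one must restrict the comparison to faces supported on $[n]$, and Equation \ref{eqdupqdelta} (equivalently Lemma \ref{lemlinkduplicate}) is exactly what legitimises this. Unlike the permutation case of Theorem \ref{SRpermutation}, where the two Stanley--Reisner rings are isomorphic, here duplication strictly enlarges the complex, so we only obtain a one-sided embedding; and, as already noted for the case of adding a trivial `on' neuron, this one-sided relationship is not by itself strong enough to read off $q(\Mh{\C})=\Mh{\C'}$, which is why that equality is instead established separately through Proposition \ref{remlinkduplicate} in Theorem \ref{thdup}.
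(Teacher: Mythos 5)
Your proposal is correct and follows the same route the paper takes (which simply cites the proof of Theorem~\ref{SRaddtrv}): define $\phi\colon f+I_{\Delta(\C)}\mapsto f+I_{\Delta(\C')}$ and invoke Lemma~\ref{lemsrdup} for well-definedness. You go further than the paper in actually justifying injectivity — via the monomial-ideal reduction and the verification that a subset of $[n]$ is a face of $\Delta(\C)$ iff it is a face of $\Delta(\C')$ (using Equation~\ref{eqdupqdelta}) — a point the paper's proof of Theorem~\ref{SRaddtrv} asserts without argument, so this is a welcome filling-in of detail rather than a different approach.
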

\no Proof of this theorem is similar to the proof of Theorem \ref{SRaddtrv}.
\\ \no Here too, we could not draw any conclusions about $ \Mh{\C} $ using the SR ring. \\
\begin{theorem}
	Let $ \phi:\ring{\D}\to\ring{\C} $ be a neural ring isomorphism and let $ q_\phi:\C\rar\D $ be the associated code map. If $ \Mh{\C}\subseteq \C $ then $ q_\phi(\Mh{\C})=\Mh{\D}. $
\end{theorem}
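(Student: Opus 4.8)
The plan is to reduce the general isomorphism case to the three elementary code maps that compose it. By Theorem \ref{thmnycc1}, since $\phi$ is a neural ring isomorphism, the associated code map $q_\phi:\C\to\D$ is a composition $q_\phi = q_r\circ\cdots\circ q_1$ where each $q_j$ is a permutation, the addition/deletion of a trivial neuron, or the duplication/deletion of a duplicate neuron. For each of these elementary maps we have already established (Theorem \ref{thprem}, Theorem \ref{thadd}, Remark \ref{remaddtrvoff}, and Theorem \ref{thdup}) that the homological mandatory set is carried exactly onto the homological mandatory set of the image code; that is, if $q$ is any one of the elementary isomorphism-type maps from a code $\mathcal{A}$ to $\mathcal{B}$, then $q(\Mh{\mathcal{A}}) = \Mh{\mathcal{B}}$. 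So the first step is simply to record this and to note that the deletion maps are the inverses of the addition/duplication maps, hence also preserve $\mathcal{M}_H$ (a permutation's inverse is again a permutation; the inverse of ``add a trivial neuron'' is ``delete a trivial neuron'', etc.), so every elementary factor $q_j$ satisfies $q_j(\Mh{\C_{j-1}}) = \Mh{\C_j}$ where $\C_0=\C,\ \C_r=\D$.

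The second step is the induction. Set $\C_0 = \C$ and $\C_j = q_j(\C_{j-1})$, so $\C_r = \D$. Applying the single-map statements successively gives
\[
q_\phi(\Mh{\C}) = q_r\big(\cdots q_1(\Mh{\C_0})\big) = q_r\big(\cdots q_2(\Mh{\C_1})\big) = \cdots = \Mh{\C_r} = \Mh{\D}.
\]
This already yields the desired equality $q_\phi(\Mh{\C}) = \Mh{\D}$. The role of the hypothesis $\Mh{\C}\subseteq\C$ is worth pausing on: it guarantees that $\Mh{\C}$ actually consists of genuine codewords, so that applying the code map $q_\phi$ (which a priori is defined on $\C$, and whose extension to $\Delta(\C)$ we have been using throughout) to $\Mh{\C}$ is unambiguous and lands in $\D$ rather than merely in $\Delta(\D)$. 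In fact each intermediate $\Mh{\C_{j}}$ is contained in $\C_{j}$: this follows because the elementary isomorphism-type maps send codewords to codewords bijectively and carry $\mathcal{M}_H$ onto $\mathcal{M}_H$, so the containment propagates along the chain. I would state this as a short sub-claim to keep the composition honest.

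The main obstacle — really the only subtlety — is bookkeeping about the \emph{direction} of the elementary maps. Theorem \ref{thmnycc1} allows deletions, and although a deletion of a trivial or duplicate neuron is a bijection on codewords with a well-understood inverse, one must make sure the ``$q(\Mh{\mathcal{A}})=\Mh{\mathcal{B}}$'' conclusion is quoted in the correct orientation for each factor; since each such $q_j$ and its inverse are both among the maps covered by Theorems \ref{thprem}, \ref{thadd}, \ref{thdup} and Remark \ref{remaddtrvoff}, this causes no real difficulty, but it is the place where a careless argument could slip. Beyond that, the proof is a routine telescoping of results already in hand, and I would keep it to a few lines.
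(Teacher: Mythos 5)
Your proposal takes the same route as the paper: the paper's own proof consists of the single sentence that the result "comes as a consequence of Theorem \ref{thmnycc1} and the results that we have shown in Theorems \ref{thprem}, \ref{thadd} and \ref{thdup}", which is exactly the telescoping argument you spell out. You are also right that the bookkeeping point about the direction of the elementary factors (deletion of a trivial or duplicate neuron versus addition/duplication) is the one place where something needs to be said and that the paper elides; since each of those "add/duplicate" maps is a bijection on codewords whose image contains the relevant $\mathcal{M}_H$ set (by Lemmas \ref{lemaddimagelink} and \ref{lemlinkneeded}), the equality $q(\Mh{\mathcal{A}}) = \Mh{\mathcal{B}}$ inverts cleanly, so your resolution is sound. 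Your observation that the hypothesis $\Mh{\C}\subseteq\C$ is not actually load-bearing for the telescoping identity itself — since Theorems \ref{thprem}, \ref{thadd} and \ref{thdup} already hold for the extended maps on $\Delta(\C)$ — and serves mainly to guarantee that $q_\phi$, as a map literally defined on $\C$, sees all of $\Mh{\C}$, is a fair and accurate reading of the paper.
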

\no The proof of this theorem comes as a consequence of Theorem \ref{thmnycc1} and the results that we have shown in  Theorems \ref{thprem}, \ref{thadd} and \ref{thdup}.	

\section{Surjective neural ring homomorphism} 
\subsection{Projection} \label{projection}
Let $ \C $ be a code on $ n $ neurons and obtain  $ \C' $ from $ \C $ by deleting $ n^{\text{th}} $ neuron of each codeword in $ \C $, i.e.,  for any $ \sigma=\sigma_{1}^{}\cdots\sigma_{n-1}^{}\sigma_{n}^{}\in\C $ form $ \sigma'\in\C' $ such that $ \sigma'=\sigma_1^{}\cdots\sigma_{n-1}^{}. $ Define a map $ q:\C\to\C' $ with $ \sigma\mapsto\sigma'. $ We will call such a map $ q $ as projection map of $ \C $ onto $ \C'. $  Further extend the map to $\Delta(\C) $ and abusing the notation call the extended map as $ q. $ 
\begin{lemma}
	Let $ q$ be the extended projection map defined as above. Then $ q $ is a well defined surjective map from $\Delta(\C) $ to $ \Delta(\C')$.
	
\end{lemma}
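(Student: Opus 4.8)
The plan is to verify separately that the extended map $q:\Delta(\C)\to\Delta(\C')$ is well defined and that it is surjective; both follow by elementary manipulations with the simplicial complex structure, entirely parallel to the arguments already given for the ``adding a trivial neuron'' and ``duplicating a neuron'' maps.

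\textbf{Well-definedness.} First I would take an arbitrary $\alpha\in\Delta(\C)$ and show $q(\alpha)\in\Delta(\C')$. By Definition \ref{defscofc} there is $\beta\in\C$ with $\alpha\subseteq\beta$. Applying the operation ``delete the $n^{\text{th}}$ coordinate'' preserves containment in binary form (if $\alpha_i\le\beta_i$ for all $i\in[n]$, then in particular for all $i\in[n-1]$), so $q(\alpha)\subseteq q(\beta)$. But $q(\beta)\in\C'$ by definition of $\C'$, hence $q(\beta)\in\Delta(\C')$, and since $\Delta(\C')$ is a simplicial complex and $q(\alpha)\subseteq q(\beta)$, we get $q(\alpha)\in\Delta(\C')$. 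This shows $q(\Delta(\C))\subseteq\Delta(\C')$, so the extended map lands in $\Delta(\C')$ and is well defined.

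\textbf{Surjectivity.} Next I would take an arbitrary $\alpha'\in\Delta(\C')$ and produce a preimage in $\Delta(\C)$. By Definition \ref{defscofc} applied to $\C'$, there is $\beta'\in\C'$ with $\alpha'\subseteq\beta'$. By construction of $\C'$ there is $\beta\in\C$ with $q(\beta)=\beta'$, i.e.\ $\beta=\beta_1\cdots\beta_{n-1}\beta_n$ and $\beta'=\beta_1\cdots\beta_{n-1}$. Now consider $\alpha:=\alpha'0\in\{0,1\}^n$ (append a $0$ in the $n^{\text{th}}$ coordinate). Then $\alpha\subseteq\beta$ coordinatewise on $[n-1]$ since $\alpha'\subseteq\beta'$, and in the $n^{\text{th}}$ coordinate $\alpha_n=0\le\beta_n$; hence $\alpha\subseteq\beta\in\C\subseteq\Delta(\C)$, so $\alpha\in\Delta(\C)$ because $\Delta(\C)$ is a simplicial complex. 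Finally $q(\alpha)=q(\alpha'0)=\alpha'$, so $\alpha$ is the desired preimage and $q$ is surjective.

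I do not anticipate a genuine obstacle here; the only mild subtlety is bookkeeping with the binary-form conventions from Remark~\ref{lemimportant} and the identification of a word on $n-1$ neurons with the corresponding word on $n$ neurons ending in $0$, exactly as in the earlier subsections. The argument is essentially a dualization of the ``adding a trivial off neuron'' discussion in Remark~\ref{remaddtrvoff}: there one appends a $0$, here one deletes a coordinate, and in both cases the simplicial-complex closure property does the work. One should just be careful that the deleted neuron need not be trivial, so $\C'$ genuinely may differ from $\C$ as a collection of subsets; but this does not affect the well-definedness or surjectivity claims, only the later analysis of links.
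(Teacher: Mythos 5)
Your proposal is correct and follows essentially the same argument as the paper: well-definedness via a codeword $\beta\supseteq\alpha$ whose image witnesses $q(\alpha)\in\Delta(\C')$, and surjectivity by exhibiting $\alpha'0$ as a preimage using the simplicial-complex closure property. The only cosmetic difference is that you observe $\alpha'0\subseteq\beta$ directly, whereas the paper first passes through $\beta'0\in\Delta(\C)$ before concluding $\alpha'0\subseteq\beta'0$; both are the same idea.
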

\begin{proof}
	Given any $ \alpha=\alpha_1^{}\cdots\alpha_{n-1}^{}\alpha_n^{}\in\Delta(\C) $ there exists $ \beta=\beta_1^{}\cdots\beta_{n-1}^{}\beta_n^{}\in\C $ such that $ \alpha\subseteq\beta. $ Let $ \alpha', \beta' $ be the respective images of $ \alpha,\beta
	$ under the map $ q. $  Furthermore, $ \alpha'\subseteq\beta' $ and as $ q(\beta)\in\C' $ we have $\alpha'\subseteq \beta'=q(\beta)\in\C'. $ By definition of simplicial complex,  $ q(\alpha)=\alpha'\in\Delta(\C') $. Therefore the map $ q:\Delta(\C)\to\Delta(\C') $ given by $ \alpha\mapsto\alpha' $ is well defined.
	
	Next we show $ q $ is surjective. Let $ \alpha'\in\Delta(\C'). $ Then there exists $ \beta'\in\C' $ such that $ \alpha'\subseteq\beta'. $ We know that $ q:\C\to\C' $ is a surjective map. Therefore there exists $ \beta\in \C $ such that $ q(\beta)=\beta' $. Note that $ \beta=\beta'1 $ or $ \beta=\beta'0. $ In either case $ \beta'0\in\Delta(\C) $ and thus $ \alpha'0\subseteq\beta'0\in\Delta(\C). $ Now as $ \Delta(\C) $ is a simplicial complex, $ \alpha'0\in\Delta(\C) $. Moreover, $ q(\alpha'0)=\alpha'. $ Hence the map $ q:\Delta(\C)\to\Delta(\C') $ is surjective. 
\end{proof}
\begin{remark} \label{remprojimp}
	Given $ \sigma'\in\Delta(\C'). $ If $ \sigma'1\in\Delta(\C) $ then as $ \sigma'0\subseteq\sigma'1 $ we get $ \sigma'0\in\Delta(\C). $ Else if $ \sigma'1\notin\Delta(\C) $ then as $ q $ is surjective we must have $ \sigma'0\in\Delta(\C). $  Thus whenever $ \sigma'\in\Delta(\C'), $ we get that $ \sigma'0\in\Delta(\C). $   We will be using this remark in the proof of the following lemma. 
\end{remark}
\begin{lemma}
	Let $ \C $ be a code on $ n$ neurons and $ q:\C\to\C' $ be the projection map described above. Then for any $ \sigma'\in\Delta(\C') $ we have $ \Lk{\sigma'}{\C'}=q(\Lk{\sigma}{\C}), $ where $ \sigma=\sigma'0. $  \label{lemprojreq}
\end{lemma}
\begin{proof}
	Let $ \omega'\in\Lk{\sigma'}{\C'} $ and let $ \omega=\omega'0. $ We will show that $ \omega\in\Lk{\sigma}{\C}. $  As  $ \omega'\in
	\Lk{\sigma'}{\C'} $ we have $ \omega'\cap\sigma'=\emptyset. $ This implies $ \omega\cap\sigma=\omega'0\cap\sigma'0=\emptyset $.  Let $ \omega'\cup\sigma'=\tau', $ then $\tau'\in \Delta(\C')$ as $  \omega'\in
	\Lk{\sigma'}{\C'}. $ Moreover,  $ \tau'0\in\Delta(\C)$ (Refer Remark \ref{remprojimp}). Now consider $ \omega\cup\sigma=\omega'0\cup\sigma'0=\tau'0\in\Delta(\C). $ Therefore  $ \omega\in\Lk{\sigma}{\C}, $ which gives $ \omega'=q(\omega)\in q(\Lk{\sigma}{\C}). $   Hence $ \Lk{\sigma'}{\C'}\subseteq q(\Lk{\sigma}{\C}). $
	
	To prove the other side containment, consider $ \omega'\in q(\Lk{\sigma}{\C}). $ Then $ \omega'= q(\omega)	 $ for some $ \omega\in \Lk{\sigma}{\C}. $ Now either $ \omega=\omega'1 $ or $ \omega=\omega'0 $ and in both cases $ \omega'0\subseteq \omega\in\Lk{\sigma}{\C} $. Hence $ \omega'0\in\Lk{\sigma}{\C} $ as $ \Lk{\sigma}{\C} $ is a simplicial complex.   We will show that $ \omega'\in\Lk{\sigma'}{\C'}. $  Now as $ \omega'0\in\Lk{\sigma}{\C} $ we have $ \omega'0\cap\sigma=\emptyset. $ Also, as $ \omega'\cap\sigma'\subseteq \omega'0\cap\sigma'0=\omega'0\cap\sigma=\emptyset $, we have $ \omega'\cap\sigma' =\emptyset.$ Next, let $ \omega'\cup\sigma'=\tau'.$ Consider $\omega'0\cup\sigma=\omega'0\cup\sigma'0=\tau'0. $  Now as $ \omega'0\in\Lk{\sigma}{\C} $ we have $ \tau'0\in\Delta(\C) $ and $ q(\tau'0)=\tau'\in\Delta(\C'). $ So, $ \omega'\cup\sigma'\in\Delta(\C'). $ Therefore $ \omega'\in\Lk{\sigma'}{\C'}. $ Hence $ q(\Lk{\sigma}{\C})\subseteq\Lk{\sigma'}{\C'} $ and the result.
\end{proof}

Note that when $\sigma=\sigma'0\in\Delta(\C)$,  $ q(\Lk{\sigma}{\C}) \subseteq \Lk{\sigma}{\C} $. Reason: For any $ \alpha'\in q(\Lk{\sigma}{\C}) $, we have $ \beta\in\Lk{\sigma}{\C} $, such that $ \alpha'=q(\beta). $ Then either $ \beta $ is $ \alpha'0 $ or $ \alpha'1. $ In both cases, $ \alpha'0\in \Lk{\sigma}{\C}, $ it being a simplicial complex and $ \alpha'0\subseteq \alpha'1. $ Thus, \begin{equation}\label{eqproj}
	q(\Lk{\sigma}{\C}) \subseteq \Lk{\sigma}{\C}, \quad \text{ when }  \sigma=\sigma'0 \in \Delta(\C).
\end{equation} 
\no Using Lemma \ref{lemprojreq} we thus get $ \Lk{\sigma'}{\C} = 	q(\Lk{\sigma}{\C}) \subseteq\Lk{\sigma}{\C}.$ Next we try to connect the homological dimensions of $ \Lk{\sigma}{\C} $ and $ \Lk{\sigma'}{\C'} $ when $ \sigma'\in\Mh{\C'}. $ Before that we define closed star of a simplicial complex which we will use in further results and prove a lemma required.  

\begin{definition}[closed star]
	Let $ \sigma\in K $ be an element of a simplicial complex $ K. $ Define closed star of $\sigma $ as $ K|^\sigma=\Delta(\sk{\sigma}{K})=\Delta(\{\tau\in K\ |\ \sigma\subseteq\tau\}). $ \\ For example, consider $ \C=\{1,12,23\} $ then $ \Delta(\C)=\{\emptyset,1,2,3,12,23\}. $ We get $ \Delta(\C)|^{3}=\{\emptyset,2,3,23\}. $ Also, note that by the definition of simplicial complex of a code (Refer Definition \ref{defscofc}) we have that $K|^\sigma$ is a simplicial complex for any $\sigma\in K$. 
\end{definition}
\no Note that  if $\sigma'1$ is some element in $\Delta(\C)$ then $v\in\Lk{\sigma'0}{\C}$, where $v=\underbrace{00\cdots0}_{n-1\text{ times }}1$. Reason: It is clear that $v\cap\sigma'0=\emptyset.$ Also since $v\cup \sigma'0=\sigma'1$ and $\sigma'1\in\Delta(\C')$. We have that $v\in\Lk{\sigma'0}{\C}$. Therefore we can consider the closed star of $v\in\Lk{\sigma'0}{\C},$ which we require in the following lemma. 
\begin{lemma}
Let $ \C $ be a code on $ n$ neurons and $ q:\C\to\C' $ be the projection map described above. Let $ \sigma'1\in\Delta(\C) $. Then $ q(\Lk{\sigma'0}{\C}|^{v}) = \Lk{\sigma'1}{\C},$ where $ v =\underbrace{00\cdots0}_{n-1\text{ times}}1.$    \label{lemprojcre}
\end{lemma}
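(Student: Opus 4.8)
The plan is to establish the two set-equalities by mutual containment, computing the image $q(\Lk{\sigma'0}{\C}|^{v})$ explicitly and matching it against $\Lk{\sigma'1}{\C}$, which by definition is $\Lk{\sigma'1}{\C'}$-style notation for the link of $\sigma'1$ inside $\Delta(\C)$ — note here $\sigma'1$ is a codeword of $\C$ on $n$ neurons, so the link lives in $\Delta(\C)$. First I would unwind what membership in the closed star $\Lk{\sigma'0}{\C}|^{v}$ means: by definition $\Lk{\sigma'0}{\C}|^{v} = \Delta(\{\tau \in \Lk{\sigma'0}{\C} \mid v \subseteq \tau\})$, i.e.\ it consists of all subsets of faces of $\Lk{\sigma'0}{\C}$ that contain the last coordinate. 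So a face $\mu$ lies in this closed star iff there is some $\tau \in \Lk{\sigma'0}{\C}$ with $v \subseteq \tau$ and $\mu \subseteq \tau$.

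The key computational step is the forward containment $q(\Lk{\sigma'0}{\C}|^{v}) \subseteq \Lk{\sigma'1}{\C}$. Take $\mu$ in the closed star and pick $\tau \in \Lk{\sigma'0}{\C}$ with $v \subseteq \tau$ and $\mu \subseteq \tau$. From $v \subseteq \tau$ the last coordinate of $\tau$ is $1$, say $\tau = \tau'1$; from $\tau \in \Lk{\sigma'0}{\C}$ we get $\tau \cap \sigma'0 = \emptyset$ and $\tau \cup \sigma'0 \in \Delta(\C)$. The first gives $\tau' \cap \sigma' = \emptyset$ (so $q(\mu) \cap \sigma' = \emptyset$, using $q(\mu) \subseteq \tau'$), and $\tau \cup \sigma'0 = \tau' \cup \sigma' $ together with the last coordinate $1$, i.e.\ $(\tau' \cup \sigma')1 = q(\mu')\cdots$ — more carefully, $\tau \cup \sigma'0$ equals $(\tau'\cup\sigma')1$, which contains $\sigma'1$, hence $\sigma'1 \cup q(\mu) \subseteq \tau'\cup\sigma' \cup q(\mu)$ sits inside $\tau\cup\sigma'0 \in \Delta(\C)$, so $q(\mu) \cup \sigma'1 \in \Delta(\C)$. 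Also $q(\mu)\cap\sigma'1 = \emptyset$ since $q(\mu)$ has last coordinate $0$ and $q(\mu)\cap\sigma' = \emptyset$. Thus $q(\mu) \in \Lk{\sigma'1}{\C}$. For the reverse containment, given $\omega \in \Lk{\sigma'1}{\C}$, I would check that $\omega1 \in \Lk{\sigma'0}{\C}$: indeed $\omega1 \cap \sigma'0 = \emptyset$ since $\omega\cap\sigma' = \emptyset$, and $\omega1 \cup \sigma'0 = (\omega\cup\sigma')1 \supseteq$ nothing problematic — it equals $(\omega\cup\sigma')1$, and $\omega\cup\sigma'1 \in \Delta(\C)$ contains this set's underlying word when we note $(\omega\cup\sigma')1 = \omega\cup\sigma'1 \in \Delta(\C)$, so $\omega1 \cup \sigma'0 \in \Delta(\C)$. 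Then $v \subseteq \omega1$ and $\omega1 \in \Lk{\sigma'0}{\C}$ shows $\omega1$ lies in the closed star over $v$, and $q(\omega1) = \omega$. So $\omega \in q(\Lk{\sigma'0}{\C}|^{v})$.

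The main obstacle I anticipate is bookkeeping with the two ambient dimensions and the binary-word conventions: $\sigma'$ is a word on $n-1$ neurons, $\sigma'0$ and $\sigma'1$ are words on $n$ neurons, $v$ has length $n$, and the closed-star operation $\Delta(\operatorname{st}(\cdot))$ must be applied carefully since $\operatorname{st}$ is not itself a simplicial complex. I would be careful to verify that every set I claim lies in $\Delta(\C)$ actually does, invoking the downward-closure of $\Delta(\C)$ (Definition \ref{defsc}) and the established fact (Remark \ref{remprojimp}) that $\tau'0 \in \Delta(\C)$ whenever $\tau' \in \Delta(\C')$; and to track when a ``$0$'' at the end is genuinely forced versus merely available. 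Apart from this careful casework, no deep input is needed beyond the definitions of link, closed star, and $q$.
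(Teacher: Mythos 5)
Your proposal is correct and takes essentially the same approach as the paper: both establish the equality by mutual containment, working codeword-by-codeword in binary form and exploiting the fact that everything in $\Lk{\sigma'1}{\C}$ has last coordinate $0$ while everything in the star of $v$ has last coordinate $1$. The only cosmetic difference is in the direction $q(\Lk{\sigma'0}{\C}|^{v})\subseteq \Lk{\sigma'1}{\C}$: you argue directly from the definition of closed star by picking a witness $\tau\supseteq v$ with $\mu\subseteq\tau\in\Lk{\sigma'0}{\C}$, whereas the paper picks a preimage $\beta$ of $\alpha'$ under $q$ and splits into the two cases $\beta=\alpha'0$, $\beta=\alpha'1$ to conclude $\alpha'1\in\Lk{\sigma'0}{\C}$; the two are interchangeable. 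One small bookkeeping slip to fix when writing this up: in the reverse direction you write ``$\omega1$'' for $\omega\in\Lk{\sigma'1}{\C}$, but since $\omega$ already lives on $n$ neurons (with last coordinate necessarily $0$), you should first write $\omega=\omega'0$ and then work with $\omega'1$, as the paper does --- otherwise the lengths don't match.
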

\begin{proof}	
Let $ \alpha\in\Lk{\sigma'1}{\C}, $ then $ \alpha $ must be of the form $\alpha'0 $ for some $ \alpha' $ on $ n-1  $ neurons as $ \alpha\cap\sigma'1
=\emptyset $. We will show that $ \alpha'\in q(\Lk{\sigma'0}{\C}|^{v}). $ We claim that $ \alpha'1\in\Lk{\sigma'0}{\C}. $ As $ \alpha'0=\alpha\in\Lk{\sigma'1}{\C} $ we have $ \alpha'0\cap\sigma'1=\emptyset. $ This gives $ \alpha'1\cap\sigma'0=\emptyset. $ Next, let $ \alpha'1\cup\sigma'0=\tau'1. 	 $ We need to show that $ \tau'1\in\Delta(\C). $ Now, $ \tau'1=\alpha'1 \cup \sigma'0= \alpha'0\cup \sigma'1 \in \Delta(\C) $ since $ \alpha'0
\in\Lk{\sigma'1}{\C}. $ Therefore $ \alpha'1\in\Lk{\sigma'0}{\C}. $  Next, as $ v\subseteq \alpha'1 $ we get $ \alpha'1\in\Lk{\sigma'0}{\C}|^{v}. $ Also as $ \alpha'0\subseteq \alpha'1 $ and $ \Lk{\sigma'0}{\C}|^{v} $ being a simplicial complex, yields $ \alpha'0\in \Lk{\sigma'0}{\C}|^{v}. $  
So, $ \alpha'=q(\alpha'0)\in q(\Lk{\sigma'0}{\C}|^{v}). $ Therefore, $ \Lk{\sigma'1}{\C}\subseteq q(\Lk{\sigma'0}{\C}|^{v}). $

For converse containment, consider $ \alpha'
\in q(\Lk{\sigma'0}{\C}|^{v}) $. Then there exists $ \beta\in\Lk{\sigma'0}{\C}|^{v} $ such that $ q(\beta)=\alpha'. $ However, $ \beta $ has only two forms, i.e, either $ \alpha'0 $ or $ \alpha'1. $ Now, if $ \beta=\alpha'1 $ then $ \alpha'1
\in\Lk{\sigma'0}{\C} $ by the definition of $ \Lk{\sigma'0}{\C}|^{v}. $ Or if $ \beta=\alpha'0 $ then as $ v\not\subseteq \alpha'0 $ we get that $ \alpha'1\in\Lk{\sigma'0}{\C}|^{v}. $  So, in either case $ \alpha'1\in\Lk{\sigma'0}{\C} $.  We need to show that $ \alpha=\alpha'0\in\Lk{\sigma'1}{\C}. $ As $ \alpha'1\in\Lk{\sigma'0}{\C} $ we have $ \alpha'1\cap\sigma'0=\emptyset. $ This implies $ \alpha'0\cap\sigma'1=\emptyset. $ Next, let $ \alpha'0\cup\sigma'1=\tau'1. $ Also, we have $ \tau'1=\alpha'1\cup\sigma'0\in \Delta(\C). $ Therefore $ \alpha'0\cup\sigma'1\in \Delta(\C). $ So, $ \alpha'0\in\Lk{\sigma'1}{\C}. $ Hence $  q(\Lk{\sigma'0}{\C}|^{v}) \subseteq \Lk{\sigma'1}{\C}$ and the result follows.
\end{proof}
\begin{lemma}\label{lemskn47}
Let $ \C $ be a code on $ n $ neurons, let $ q:\C\to\C' $ be the projection map described above, and let $ \sigma'\in\Mh{\C'} $.  Then there exists $ \sigma\in\Delta(\C) $ such that $ q(\sigma)=\sigma' $ and $ \dim \widetilde{H}_i( \Lk{\sigma}{\C})>0 $ for some $ i\geq 0. $ \label{lemlinkprojection}
\end{lemma}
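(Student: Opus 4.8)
The plan is to take $\sigma' \in \Mh{\C'}$ and carefully select a preimage $\sigma \in \Delta(\C)$ whose link carries nontrivial reduced homology. There are two natural candidates, $\sigma = \sigma'0$ and $\sigma = \sigma'1$ (the latter only when $\sigma'1 \in \Delta(\C)$), so the argument will split into cases according to whether $\sigma'1 \in \Delta(\C)$ or not. In the easy case, when $\sigma'1 \notin \Delta(\C)$, surjectivity of $q$ forces $\sigma'0 \in \Delta(\C)$, and by Lemma \ref{lemprojreq} we have $\Lk{\sigma'}{\C'} = q(\Lk{\sigma'0}{\C})$; moreover, since no codeword of $\Lk{\sigma'0}{\C}$ can contain the last neuron (else $\sigma'1$ would be a face of $\Delta(\C)$), the map $q$ restricted to $\Lk{\sigma'0}{\C}$ is the identity when viewed on subsets, so $\Lk{\sigma'0}{\C} \cong \Lk{\sigma'}{\C'}$ and the homology transfers verbatim. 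Then $\sigma = \sigma'0$ works.

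The substantive case is when $\sigma'1 \in \Delta(\C)$. Here both $\sigma'0$ and $\sigma'1$ lie in $\Delta(\C)$, and by Lemma \ref{lemprojreq} again $\Lk{\sigma'}{\C'} = q(\Lk{\sigma'0}{\C})$, while Equation \eqref{eqproj} gives $q(\Lk{\sigma'0}{\C}) \subseteq \Lk{\sigma'0}{\C}$. The idea is to relate the homology of $\Lk{\sigma'0}{\C}$ to that of $\Lk{\sigma'}{\C'}$ via a Mayer--Vietoris type decomposition: write $\Lk{\sigma'0}{\C}$ as the union of the closed star $\Lk{\sigma'0}{\C}|^{v}$ of the vertex $v = 0\cdots01$ and the subcomplex $\Lk{\sigma'0}{\C}|_{\{\,\alpha : v \not\subseteq \alpha\,\}}$ of faces not containing $v$. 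The first piece is a cone on $v$, hence contractible; by Lemma \ref{lemprojcre} its image under $q$ is $\Lk{\sigma'1}{\C}$; and the second piece maps isomorphically (again $q$ is identity on subsets there) onto $\Lk{\sigma'}{\C'}$, or rather onto a deformation retract of it. The intersection of the two pieces is $\Lk{v}{\Lk{\sigma'0}{\C}}$, which one checks also maps onto $\Lk{\sigma'1}{\C}$. Feeding this into the Mayer--Vietoris long exact sequence, the contractibility of the star kills one term and yields $\widetilde{H}_i(\Lk{\sigma'0}{\C}) \cong \widetilde{H}_i(\Lk{\sigma'}{\C'})$ in the relevant degrees — or at worst a long exact sequence forcing some $\widetilde{H}_i(\Lk{\sigma'0}{\C}) \ne 0$ whenever some $\widetilde{H}_i(\Lk{\sigma'}{\C'}) \ne 0$. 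In either case $\sigma = \sigma'0$ is the required preimage.

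I expect the main obstacle to be making the Mayer--Vietoris bookkeeping precise: identifying the two subcomplexes and their intersection correctly as combinatorial objects on $[n]$ versus $[n-1]$, verifying that $q$ really does induce the claimed isomorphisms on the pieces not containing $v$ (this is where the bijectivity of $q$ on faces avoiding the last neuron matters), and checking that the image subcomplex $q(\Lk{\sigma'0}{\C})$ inside $\Lk{\sigma'0}{\C}$ is genuinely a deformation retract rather than merely a subset. An alternative that sidesteps some of this is to argue directly that $v$ is a dominated vertex of $\Lk{\sigma'0}{\C}$ (every facet containing $v$ comes from a facet of $\Delta(\C)$ with last coordinate $1$, and one shows such facets all share a common vertex, or more simply that collapsing $v$ lands in $\Lk{\sigma'}{\C'}$), giving a strong collapse $\Lk{\sigma'0}{\C} \Searrow \Lk{\sigma'}{\C'}$ and hence a homotopy equivalence, exactly as in Proposition \ref{remlinkduplicate}; then the homology transfer is immediate and one simply takes $\sigma = \sigma'0$. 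I would try the strong-collapse route first, falling back on Mayer--Vietoris only if domination fails in some configuration.
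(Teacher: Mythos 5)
Your Case 1 matches the paper's. Case 2 has a genuine gap: you conclude that $\sigma = \sigma'0$ always serves as the required preimage, but this is false in general, and neither of your two proposed routes rescues it.

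The strong-collapse route fails because, unlike in the duplication case, there is no structural constraint forcing the vertex $v = 0\cdots01$ to be dominated in $\Lk{\sigma'0}{\C}$. Concretely, take $\C = \{124, 134, 234\}$ on $4$ neurons and project away the $4^{\text{th}}$ neuron, giving $\C' = \{12,13,23\}$ and $\Delta(\C') = \partial\Delta^2$. With $\sigma' = \emptyset$ we have $\Lk{\sigma'}{\C'} = \partial\Delta^2$, so $\sigma' \in \Mh{\C'}$; but $\Lk{\sigma'0}{\C} = \Delta(\C)$ is the cone over $\partial\Delta^2$ with apex $4$, which is contractible. So $\sigma'0$ does not work, and $v=4$ is not dominated in $\Delta(\C)$ since the facets $124,134,234$ have no common vertex other than $4$. (Here the correct preimage is $\sigma'1 = \{4\}$, whose link is $\partial\Delta^2$.)

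Your Mayer--Vietoris setup is actually the right idea, but you drew the wrong conclusion from it. Writing $\Lk{\sigma'0}{\C} = A \cup B$ with $A$ the closed star of $v$ (contractible) and $B$ the faces avoiding $v$ (identified with $\Lk{\sigma'}{\C'}$), the intersection $A \cap B = \Lk{v}{\Lk{\sigma'0}{\C}}$ identifies, via Lemma \ref{lemprojcre}, with $\Lk{\sigma'1}{\C}$. When $\widetilde{H}_*(\Lk{\sigma'0}{\C}) = 0$, the reduced Mayer--Vietoris sequence gives $\widetilde{H}_i(A\cap B) \cong \widetilde{H}_i(B)$, i.e.\ $\widetilde{H}_i(\Lk{\sigma'1}{\C}) \cong \widetilde{H}_i(\Lk{\sigma'}{\C'})$. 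So the long exact sequence forces $\sigma'1$ to be the good preimage, not $\sigma'0$. This is exactly the dichotomy the paper uses in Case 2 (phrased there informally as the "hole formed by removal of $v$" and routed through Lemma \ref{lemprojcre}); your Mayer--Vietoris framing in fact makes the paper's argument rigorous, once you fix the endpoint to $\sigma'1$. Also note the lemma only asks for \emph{some} preimage with nontrivial link homology, so there is no need to pin down $\sigma'0$ in particular.
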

\begin{proof}
%
Let $ \sigma'\in \Mh{\C'}. $ Then, by definition of $ \Mh{\C'},\ \sigma'\in\Delta(\C'). $ There are two possibilities: \\ 
\textbf{Case 1:} Let $ \sigma'1\notin\Delta(\C). $ Then by Remark \ref{remprojimp},  $ \sigma'0\in\Delta(\C). $ Let $ \sigma=\sigma'0. $ We first show that $ \Lk{\sigma}{\C}=\Lk{\sigma'}{\C'}. $   Let $ \omega=\omega_1\cdots\omega_n\in \Lk{\sigma}{\C} $ then we claim that  $ \omega_{n}=0.$ Suppose not, then $ v= \underbrace{00\cdots0}_{n-1\text{ times}}1\subseteq\omega\in\Lk{\sigma}{\C}.$ As $ \Lk{\sigma}{\C} $ is a simplicial complex we get $ v\in\Lk{\sigma}{\C}. $ This implies $ \sigma\cup v^{} \in\Delta(\C). $ However, $ \sigma'1= \sigma'0\cup \underbrace{00\cdots0}_{n-1\text{ times}}1=\sigma\cup v . $ This implies $ \sigma'1\in\Delta(\C) $ which is contradiction to the hypothesis that $ \sigma'1\notin\Delta(\C). $  Hence $ \omega_{n}=0 $ for all $ \omega\in\Lk{\sigma}{\C}. $ So, for all $ \omega\in\Lk{\sigma}{\C} $ we rewrite $ \omega=\omega'0, $ where $ \omega' $ is a codeword on $ n-1 $ neurons. Moreover, $ q(\omega)=\omega'=\omega'0=\omega. $ This gives us that $  \Lk{\sigma}{\C}\subseteq q(\Lk{\sigma}{\C}).$ The converse containment follows from Equation \ref{eqproj}, and we have $\Lk{\sigma}{\C}= q(\Lk{\sigma}{\C}). $ Further, by Lemma \ref{lemprojreq} we have $ \Lk{\sigma'}{\C'}=q(\Lk{\sigma}{\C})  $. Hence $ \Lk{\sigma'}{\C'}= \Lk{\sigma}{\C}.$ Therefore $ \dim \widetilde{H}_i (\Lk{\sigma}{\C})=\dim \widetilde{H}_i (\Lk{\sigma'}{\C'}) $ for all $ i. $ Also, as $ \sigma'\in\Mh{\C'} $ there exists  some $ j\geq 0$ such that $ \dim \widetilde{H}_j (\Lk{\sigma'}{\C'})>0. $ Hence, in this case $ \sigma=\sigma'0 $ belongs to $ \Delta(\C) $ with $ \dim\widetilde{H}_i (\Lk{\sigma}{\C})>0 $ for some $ i\geq 0. $ \\ \textbf{Case 2:} $ \sigma'1\in\Delta(\C)$.	
Let, if possible  $ \dim \widetilde{H}_i(\Lk{\sigma'0}{\C})=0 $ for all $ i\geq 0. $ Let $ v $ be the $ n^{\text{th}} $ vertex of $ \Delta(\C). $ 	As $ \Lk{\sigma'0}{\C}|^{v} \subseteq \Lk{\sigma'0}{\C} $ we get $  q(\Lk{\sigma'0}{\C}|^{v})\subseteq q(\Lk{\sigma'0}{\C})$. 
Next, as $ \sigma'\in\Mh{\C'}, $ there exists $ k\geq 0 $ such that $ \dim \widetilde{H}_k (\Lk{\sigma'}{\C'})>0. $ Therefore by Lemma \ref{lemprojreq}, $ \dim \widetilde{H}_k ( q(\Lk{\sigma'0}{\C}))>0. $   But, $ \dim\widetilde{H}_i(\Lk{\sigma'0}{\C})=0 $ for all $ i\geq 0 $, so we can conclude that the $ k^{\text{th}} $ dimensional hole that is in  $ q(\Lk{\sigma'0}{\C}) $ is formed by removal of $ v. $ So, we get $ \dim \widetilde{H}_k ( q(\Lk{\sigma'0}{\C}|^{v}))>0. $ Moreover, by Lemma \ref{lemprojcre}, $ \dim \widetilde{H}_k ( \Lk{\sigma'1}{\C'})>0. $ So, $ \sigma'1\in\Mh{\C} $. Hence the result.  
\end{proof}
\begin{theorem}
Let $ \C $ be a code on $ n  $ neurons and $ q:\C\to\C' $ be the projection map described above. Then $  \Mh{\C'}\subseteq q(\Mh{\C}). $  
\end{theorem}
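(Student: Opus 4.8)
The plan is to obtain this inclusion as an immediate consequence of Lemma \ref{lemskn47} (equivalently Lemma \ref{lemlinkprojection}), which has just been established. That lemma already does all the real work: it guarantees that every homologically mandatory codeword of the projected code lifts to a codeword of $\Delta(\C)$ whose link still carries nontrivial reduced homology.

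Concretely, I would argue as follows. Let $\sigma'\in\Mh{\C'}$ be arbitrary. By Lemma \ref{lemskn47}, there exists $\sigma\in\Delta(\C)$ with $q(\sigma)=\sigma'$ and $\dim\widetilde{H}_i(\Lk{\sigma}{\C})>0$ for some $i\geq 0$. The latter condition is exactly the defining property of membership in $\Mh{\C}$, so $\sigma\in\Mh{\C}$. Consequently $\sigma'=q(\sigma)\in q(\Mh{\C})$. Since $\sigma'$ was an arbitrary element of $\Mh{\C'}$, we conclude $\Mh{\C'}\subseteq q(\Mh{\C})$.

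Thus the proof is essentially a one-line deduction. The genuine difficulty was already absorbed into Lemma \ref{lemskn47}, namely the case split on whether $\sigma'1\in\Delta(\C)$: in the first case one identifies $\Lk{\sigma'}{\C'}$ with $\Lk{\sigma'0}{\C}$ directly, and in the second case, assuming $\Lk{\sigma'0}{\C}$ is acyclic leads (via Lemma \ref{lemprojreq} and Lemma \ref{lemprojcre}, together with the observation that any $k$-dimensional hole in $q(\Lk{\sigma'0}{\C})$ must be created by deleting the vertex $v$) to the conclusion that $\sigma'1\in\Mh{\C}$ instead. So here I expect no further obstacle beyond correctly invoking the lemma; the only care needed is to note that $q$ restricted to $\Delta(\C)$ maps $\Mh{\C}$ into $\Mh{\C'}$ is \emph{not} what is being claimed here — we only need the surjectivity-type statement that every element of $\Mh{\C'}$ has a preimage already lying in $\Mh{\C}$, which is precisely what Lemma \ref{lemskn47} provides.
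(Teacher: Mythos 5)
Your proposal is correct and is essentially identical to the paper's own proof: both simply invoke Lemma \ref{lemlinkprojection} to produce a preimage $\sigma\in\Delta(\C)$ of $\sigma'$ with nonvanishing reduced homology of its link, observe that this places $\sigma\in\Mh{\C}$, and conclude $\sigma'=q(\sigma)\in q(\Mh{\C})$.
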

\begin{proof}
Let $ \sigma'\in\Mh{\C'} $ then by Lemma \ref{lemlinkprojection} there exists $ \sigma\in\Delta(\C) $ such that $ q(\sigma)=\sigma' $ and there exists some $ i\geq 0 $ with $ \dim \widetilde{H}_i(\Lk{\sigma}{\C})>0. $ This implies $ \sigma\in\Mh{\C} $. Therefore  $ \sigma'=q(\sigma)\in q(\Mh{\C}). $ Hence the proof. 
\end{proof}
\begin{eg} \label{exprojection}
We give an example to show that $ q(\Mh{\C})$ need not be contained in $ \Mh{\C'} $.
Let $ \C=\{123,24,2\},  $ then $ \Delta(\C)=\{1,2,3,4,12,23,13,24,123\}. $ Projecting $ \C  $ on the first three neurons (deleting the $ 4^{\text{th}} $ neuron) we get $ \C'=\{123,2\}. $ Also,  $ \Delta(\C')=q(\Delta(\C))=\{1,2,3,12,23,13,123\}. $ Computing $ M_H $ on the mathematical software Macaulay2 (Refer \cite{M2}) we obtain $ \Mh{\C}=\{123,24\} $ and $ \Mh{\C'}=\{123\}. $ But $ q(\Mh{\C})=\{123,2\}. $ Therefore $ q(\Mh{\C})\not\subset \Mh{\C'}.  $
\begin{figure}[H]
	\begin{subfigure}[b]{0.3\linewidth}
		\begin{tikzpicture}[scale=0.5]
			\tikzstyle{point}=[circle,thick,draw=black,fill=black,inner sep=0pt,minimum width=4pt,minimum height=4pt]
			\node (a)[point, label={[label distance=-0.7cm]:$2$}] at (0,0) {};
			\node (b)[point,label={[label distance=0cm]5:$3$}] at (3,0) {};
			\node (c)[point,label={[label distance=-0.6cm]4:$1$}] at (2,2) {};

			\node (d)[point,label={[label distance=-0.6cm]:$4$}] at (-1,-0.5) {};

			
			\draw[pattern=north west lines] (a.center) -- (b.center) -- (c.center) -- cycle;
			\draw (a.center) -- (d.center);
			
		\end{tikzpicture}
		\caption{$ \Delta(\C) $}
	\end{subfigure}
	\centering
	\begin{subfigure}[b]{0.3\linewidth}
		\begin{tikzpicture}[scale=0.5]
			\tikzstyle{point}=[circle,thick,draw=black,fill=black,inner sep=0pt,minimum width=4pt,minimum height=4pt]
			\begin{scope}[xshift=2cm]
				\node (a)[point, label={[label distance=-0.7cm]5:$2$}] at (0,0) {};
				\node (b)[point,label={[label distance=0cm]5:$3$}] at (3,0) {};
				\node (c)[point,label={[label distance=-0.6cm]4:$1$}] at (2,2) {};
				\draw[pattern=north west lines] (a.center) -- (b.center) -- (c.center) -- cycle;
			\end{scope}	
			

			
			
			
		\end{tikzpicture}
		\caption{$ \Delta(\C') $}
	\end{subfigure}
\end{figure}
\end{eg}

\no\textbf{Relationship between SR rings of $ \Delta(\C) $ and $ \Delta(\C') $}.
\begin{theorem}
Let $ \C $ be a code on $n$ neurons and $ q:\C\to\C' $ be the projection map described above. Then the SR ring of $ \Delta(\C') $ is isomorphic to some sub-ring of the SR ring of $ \Delta(\C) $. \label{SRproj}
\end{theorem}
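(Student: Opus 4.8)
The plan is to write down an explicit injective ring homomorphism from the Stanley--Reisner ring of $\Delta(\C')$ into that of $\Delta(\C)$; then $\mathcal{P}(n-1)/I_{\Delta(\C')}$ is isomorphic to its image, a sub-ring of $\mathcal{P}(n)/I_{\Delta(\C)}$. The starting point is a purely combinatorial identification. By Remark \ref{remprojimp} (together with the well-definedness of the extended map $q$), for $\sigma'\subseteq[n-1]$ we have $\sigma'\in\Delta(\C')$ if and only if $\sigma'0\in\Delta(\C)$; viewing $\sigma'$ as the same subset of $[n-1]\subseteq[n]$, this says $\Delta(\C')$ is exactly the collection of faces of $\Delta(\C)$ that avoid the $n$-th vertex. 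In particular, for any $\tau\subseteq[n-1]$ one has $\tau\notin\Delta(\C')$ iff $\tau\notin\Delta(\C)$, so every generator $x^\tau$ of the ideal $I_{\Delta(\C')}\subseteq\mathcal{P}(n-1)$ is also a generator of $I_{\Delta(\C)}\subseteq\mathcal{P}(n)$. Hence, under the natural inclusion $\mathcal{P}(n-1)\hookrightarrow\mathcal{P}(n)$, we get $I_{\Delta(\C')}\subseteq I_{\Delta(\C)}$.

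With this in hand, I would define $\psi:\mathcal{P}(n-1)/I_{\Delta(\C')}\to\mathcal{P}(n)/I_{\Delta(\C)}$ by $f+I_{\Delta(\C')}\mapsto f+I_{\Delta(\C)}$ (using $\mathcal{P}(n-1)\hookrightarrow\mathcal{P}(n)$); the containment of ideals makes this a well-defined ring homomorphism. To prove $\psi$ injective, I would produce a left inverse: let $\pi:\mathcal{P}(n)\to\mathcal{P}(n-1)$ be the $k$-algebra map with $\pi(x_n)=0$ and $\pi(x_i)=x_i$ for $i\le n-1$. Splitting the generators $x^\tau$ of $I_{\Delta(\C)}$ into the cases $n\in\tau$ (then $\pi(x^\tau)=0$) and $n\notin\tau$ (then $\tau\subseteq[n-1]$ and $\tau\notin\Delta(\C)$, so $\tau\notin\Delta(\C')$ and $\pi(x^\tau)=x^\tau\in I_{\Delta(\C')}$) shows $\pi(I_{\Delta(\C)})\subseteq I_{\Delta(\C')}$, so $\pi$ descends to a ring map $\bar{\pi}:\mathcal{P}(n)/I_{\Delta(\C)}\to\mathcal{P}(n-1)/I_{\Delta(\C')}$. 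Since $\pi$ is the identity on $\mathcal{P}(n-1)$, we have $\bar{\pi}\circ\psi=\mathrm{id}$, whence $\psi$ is injective.

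Therefore $\psi$ identifies the Stanley--Reisner ring of $\Delta(\C')$ with the sub-ring $\psi\bigl(\mathcal{P}(n-1)/I_{\Delta(\C')}\bigr)$ of the Stanley--Reisner ring of $\Delta(\C)$ (concretely, the sub-ring generated by the classes of $x_1,\dots,x_{n-1}$), which is precisely the claim. An alternative route to injectivity avoids the map $\bar{\pi}$ and uses instead the standard monomial $k$-basis of a Stanley--Reisner ring: in $\mathcal{P}(n)/I_{\Delta(\C)}$ the images of the monomials whose support is a face of $\Delta(\C)$ are linearly independent, so a polynomial in $x_1,\dots,x_{n-1}$ that lies in $I_{\Delta(\C)}$ can only involve monomials whose support is a subset of $[n-1]$ that is not a face of $\Delta(\C)$, hence not a face of $\Delta(\C')$, and every such monomial already lies in $I_{\Delta(\C')}$. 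I expect the only delicate point is keeping careful track of the two different ambient polynomial rings $\mathcal{P}(n-1)$ and $\mathcal{P}(n)$ and of the identification $\Delta(\C')\leftrightarrow\{\tau\in\Delta(\C):n\notin\tau\}$; after that the argument is the projection counterpart of the proof of Theorem \ref{SRaddtrv}.
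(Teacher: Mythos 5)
Your proof is correct, and it takes a genuinely different route from the paper. The paper introduces an auxiliary code $\C''$ obtained from $\C'$ by adding a trivial off neuron, cites Remark \ref{remaddtrvoff} to identify the SR ring of $\Delta(\C')$ with that of $\Delta(\C'')$, and then appeals to an inclusion of ideals $I_{\Delta(\C'')}\subseteq I_{\Delta(\C)}$ inside $\mathcal P(n)$ to conclude. You cut out the middleman: you identify $\Delta(\C')$ with the sub-complex $\{\tau\in\Delta(\C): n\notin\tau\}$ directly and write down the explicit map $\psi$, with the retraction $\bar\pi$ (setting $x_n\mapsto 0$) supplying $\bar\pi\circ\psi=\mathrm{id}$ and hence injectivity. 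This last step is exactly what your approach buys: well-definedness of $\psi$ uses $I_{\Delta(\C')}\subseteq I_{\Delta(\C)}$, but injectivity needs the reverse containment $I_{\Delta(\C)}\cap\mathcal P(n-1)\subseteq I_{\Delta(\C')}$, and your $\bar\pi$ establishes it cleanly. In fact the paper's intermediate inclusion $I_{\Delta(\C'')}\subseteq I_{\Delta(\C)}$ is stated in the wrong direction: since $\Delta(\C'')\subseteq\Delta(\C)$ one has $I_{\Delta(\C)}\subseteq I_{\Delta(\C'')}$ (for instance $x_n\in I_{\Delta(\C'')}$, but $x_n\notin I_{\Delta(\C)}$ whenever $n$ is a vertex of $\Delta(\C)$), and the corrected containment would produce a surjection of quotient rings rather than an embedding. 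Your construction, which realizes the SR ring of $\Delta(\C')$ as the retract sub-ring $k[\bar x_1,\dots,\bar x_{n-1}]$ of $\mathcal P(n)/I_{\Delta(\C)}$, is therefore the more careful and more complete argument for the stated claim; the alternative you sketch via the monomial $k$-basis of the Stanley--Reisner ring is equally valid.
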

\begin{proof}
Define a code $ \C'' $ by adding a trivial off neuron and consider $ q':\C'\to \C'' $ to be the map obtained in Remark \ref{remaddtrvoff}. Then we observe that $ I_{\Delta(\C'')}\subseteq I_{\Delta(\C)}\subseteq  \mathcal{P}(n).  $ Therefore, the SR ring of $ \Delta(\C'')$ is a subset of SR ring of $ \Delta(\C), $ i.e., $  \mathcal{P}(n)/I_{\Delta(\C'')}\subseteq  \mathcal{P}(n)/ I_{\Delta(\C)}. $ Moreover, Remark \ref{remaddtrvoff} gives us that $  \mathcal{P}{(n-1)}/I_{\Delta(\C')}\cong  \mathcal{P}(n)/I_{\Delta(\C'')}. $ Hence we get that the SR ring of $ \Delta(\C') $ is isomorphic to some sub-ring of SR ring of $ \Delta(\C).  $
\end{proof}
\no In this case too, we could not draw any conclusions about $ \Mh{\C} $ using the SR ring since we do not get any nice relationships between the SR rings, unlike in a few previous cases. \\
\section{Future work}
A Jeffs \cite{jeffs2018homomorphisms} introduced new set of morphisms between two neural rings that helps in classifying open convex codes. We call these morphisms as \textit{trunk preserving morphisms}. We have seen that the neural ring homomorphisms are a subclass of the trunk preserving morphisms. We would leave it as open question to check in general, the behavior of trunk preserving morphisms on $\Mh{\C}$. 

	
Authors Affiliation:  \vspace{0.2cm}\\
Neha Gupta \\ Assistant Professor \\ Department of Mathematics \\ Shiv Nadar Institution of Eminence (Deemed to be University) \\ Delhi-NCR \\ India \\ Email: neha.gupta@snu.edu.in  \vspace{0.6cm}\\
 Suhith K N \\ Research Scholar \\ Department of Mathematics \\ Shiv Nadar Institution of Eminence (Deemed to be University) \\ Delhi-NCR \\ India \\ Email: sk806@snu.edu.in \\
 Suhith's research is partially supported by Inspire fellowship from DST grant IF190980.
\end{document}